\newcommand\reg{\operatorname{reg}}
\newcommand\Dom{\operatorname{Dom}}
\newcommand\Span{\operatorname{span}}
\newcommand\cM{\mathcal M}
\newcommand\cQ{\mathcal Q}
\newcommand\hq{\mathfrak q}
\newcommand\tg{\widetilde{g}}
\newcommand\tb{\widetilde{b}}
\newcommand\tB{\widetilde{B}}
\newcommand\tpsi{\widetilde{\psi}}
\newcommand\tc{\widetilde{c}}
\newcommand\hc{\widehat{c}}
\newcommand\hf{\widehat{f}}
\newcommand\tL{\widetilde L}
\newcommand\tq{\widetilde{q}}
\newcommand\tu{\widetilde{u}}
\newcommand\tv{\widetilde{v}}
\newcommand\tQ{\widetilde{Q}}
\newcommand\RR{\mathbb R}
\newcommand\cC{\mathcal{C}}
\newcommand\cdC{\dot{\mathcal{C}}}
\newcommand\tf{\tilde{f}}
\renewcommand\Re{\operatorname{Re}}
\renewcommand\Im{\operatorname{Im}}
\newcommand\bbN{\mathbb N}
\newcommand\bbR{\mathbb R}
\newcommand\pa{\partial}
\newcommand\restrictedto{\upharpoonright}
\newcommand\supp{\operatorname{supp}}
\newcommand\CI{{\mathcal C}^{\infty}}
\newcommand\CIc{{\mathcal C}^{\infty}_{\text{c}}}
\newcommand\Id{\operatorname{Id}}
\DeclareMathOperator{\WF}{WF}
\newcommand{\e}{\epsilon}
\newcommand{\del}{\partial}
\newcommand{\calC}{{\mathcal C}}
\newcommand{\calO}{{\mathcal O}}
\newtheorem{theorem}{Theorem}
\newtheorem{proposition}{Proposition}
\newtheorem{corollary}{Corollary}
\newtheorem{lemma}{Lemma}
\theoremstyle{definition}
\newtheorem{definition}{Definition}
\theoremstyle{remark}
\newtheorem{remark}{Remark}
\begin{document}

\title{Wright-Fisher Diffusion in One Dimension}

\author{Charles L. Epstein,\footnote{Research partially supported by
    NSF grant DMS06-03973, DARPA HR00110510057, and the Thomas A. Scott Chair.
\quad
Address: Department of Mathematics, University of
    Pennsylvania; e-mail: cle@math.upenn.edu }\quad
and Rafe Mazzeo\footnote{Research partially supported by
    NSF grant DMS0805529, and  DARPA HR00110510057.\quad
Address: Department of Mathematics, Stanford
    University; e-mail: mazzeo@math.stanford.edu}}

\date{July 22, 2009}

\maketitle
\medskip
\noindent
{\sc{ This paper is dedicated to the memory of Ralph S.\ Phillips (1913-1998), one of the seminal
figures in the theory of semi-groups. Early in our  careers, he was a profound and positive influence on both of
us.}}
\medskip
\begin{abstract}  
We analyze the diffusion process associated to equations of Wright-Fisher
type in one spatial dimension. These are associated to the degenerate heat equation
\begin{equation}
\pa_t u=a(x)\pa_x^2u+b(x)\pa_xu
\end{equation}
on the interval $[0,1]$, where $a(x)>0$ on the interior and vanishes simply at
the endpoints, and $b(x)\pa_x$ is a vector field which is inward-pointing at
both ends. We consider various aspects of this problem, motivated by their
applications in biology, including a comparison of the natural boundary
conditions from the probabilistic and analytic points of view, a sharp
regularity theory for the ``zero flux'' boundary conditions, as well as a
derivation of the precise asymptotics of solutions of this equation, both as $t
\to 0, \infty$ and as $x \to 0, 1$. This is a precursor to our more complicated
analysis of these same questions for Wright-Fisher type problems in higher
dimensions.
\end{abstract}

\section{Introduction}
Consider the differential operator 
\begin{equation}
L = a(x)\frac{d^2\,}{dx^2} + b(x)\frac{d\,}{dx}
\label{eq:genwf}
\end{equation}
on the interval $[A,B]$, where the coefficient functions $a(x), b(x) \in
{\mathcal C}^\infty([A,B])$.  Our main assumptions on the coefficients are that
\begin{equation}
\begin{split}
a(x) & =  (x-A)(B-x)\tilde{a}(x), \\   & \mbox{where}\quad 
\tilde{a}(x) \in {\mathcal C}^\infty([A,B]), 
\mbox{and}\quad \tilde{a}(x) > 0 \ \forall\, x \in [A,B]
\end{split}
\label{eq:wfhyp1}
\end{equation}
and
\begin{equation}
b(A) \geq 0, \quad b(B) \leq 0.
\label{eq:wfhyp2}
\end{equation}

In other words, we assume that $a(x)$ vanishes simply at the endpoints of the
interval and nowhere else, and that the first order term $b(x)d/dx$ is an
inward pointing vector field. The diffusion associated to $L$ is of importance
in population biology. The basic example is the so-called Wright-Fisher
operator
\begin{equation}
L_{\mathrm{WF}} = x(1-x)\frac{d^2\,}{dx^2}
\end{equation}
on the interval $[0,1],$  which is the diffusion limit of a Markov chain
modeling the frequency of a gene with 2 alleles, without mutation or
selection. If the mutation rates between the alleles are $\mu_{12}$ and
$\mu_{21},$ and the mutant allele has selective advantage $s,$ then
\begin{equation}
  b(x)=\mu_{12}(1-x)-\mu_{21}x+sx(1-x),
\end{equation}
see~\cite{Ewens}.  Accordingly, we shall call any operator of the form
(\ref{eq:genwf}) with coefficients satisfying (\ref{eq:wfhyp1}) and
(\ref{eq:wfhyp2}) a generalized Wright-Fisher operator.  By an affine
transformation, we can also reduce to the case where $L$ is defined on the
interval $[0,1]$.

In a seminal 1952 paper,~\cite{Feller1}, Feller considered the additional
conditions one should impose on the domain of an operator like $L$ to obtain
the generator of a positivity preserving $\cC^0$-semi-group. Feller's starting
point was the Hille-Yosida theorem and his analysis centered on the
construction of the resolvent kernel, $(L-\lambda)^{-1}.$ Our approach, by
contrast, focuses directly on the Schwartz kernel for $e^{tL}$.  In addition,
we restrict to a particular choice of boundary condition, which arises in
applications to population genetics.

The plan of this paper is as follows. In the next section we recall a change of
variables introduced in~\cite{Feller1} which reduces a general Wright-Fisher
operator $L$ to one with principal part $x(1-x)\pa_x^2$.  We then describe the
natural boundary conditions for this operator and its adjoint.
The operator $L$ is modeled near each endpoints, $x=0$ or $x=1$ by an operator
of the form:
\begin{equation}
L_b=x\pa_x^2+b\pa_x, 
\end{equation}
where $b$ is a nonnegative constant. The model operators act on functions on
$[0,\infty),$ with boundary conditions at $x=0$ induced from those for $L.$ The
next several sections are devoted to a careful analysis of the solution
operator for $\pa_t-L_b$.  After a discussion in \S~\ref{modmxprn} of maximum
principles in this setting, we derive in \S~\ref{s.modfs} an explicit formula
for the Schwartz kernel $k_t^b$ of this solution operator, and then in
\S~\ref{s.mapping} combine these ingredients to prove sharp mapping properties
for the kernels $k_t^b$.  The long \S~\ref{s.pests} contains a variety of
technical estimates needed to analyze perturbations of these model
operators. After these extensive preliminaries, it is straightforward to
assemble this information and express the fundamental solution for a general
Wright-Fisher operator $L$ on $[0,1]$ as a convergent Neumann series. This is
done in \S~\ref{htker4L}; this leads directly to the description of precise
asymptotics of solutions of $(\pa_t - L)u = 0$ as $t \to 0$ and as $x \to 0,
1$. An asymptotic expansion of the heat kernel for $L$ was obtained using
other methods by Keller and Tier in~\cite{KellerTier}.  The next section
\S~\ref{s.infgen} uses this to provide a characterization of the infinitesimal
generator of the induced semi-group on $\calC^0$ and its spectrum, which gives
the long-time asymptotics for solutions of the diffusion problem. In
\S~\ref{s.resolv}, we then use the Hille-Yosida theorem to study the higher
order regularity of solutions to the ``elliptic''equation $(\lambda-L)w=f,$ for
$f\in\cC^m([0,1]).$ Finally, in \S~\ref{adjsmgrp} we characterize the adjoint
semi-group, using various abstract results from the theory of semi-groups as
well as the specific analytic information accrued to this point. This allows us
to discuss the ``forward'' Kolmogorov equation, which is crucial for
applications in population genetics.

Our focus throughout is on the $\calC^0$ (and $\calC^m$) theory, rather than the (simpler)
$L^2$ theory. We shall return to a comparison between the $\calC^0$ and $L^2$ semi-groups
elsewhere. 

\begin{remark}[Notational Remark] We let $I\subset\bbR$ be an interval or a
  ray. We use the notation $\cC^m_b(I)$ for the space of functions with $m$
  continuous, bounded derivatives on $I;$ the notation $\cC^m_c(I)$ for the
  space of functions with $m$ continuous derivatives, and compact support on
  $I.$ Finally we use $\dot{\cC}^m([a,b))$ to denote the space of functions with $m$
  continuous derivatives on $[0,b),$ tending to zero at $b.$
\end{remark}

\centerline{\bf Acknowledgements} 

\noindent
We would like to thank Charlie Fefferman for showing us his construction of the
heat kernel for the model problem on the half line, Dan Stroock for sharing his
probabilistic approach to these questions, which appears in a recently
completed preprint written jointly with Linan Chen,~\cite{ChenStroock}, and
Nick Patterson for very helpful discussions on this subject. Finally, we are
both grateful to Ben Mann and the DARPA FunBio program for giving providing the
intellectual stimulus and opportunity to undertake this research.

\section{A change of variables}
The first task is to show that by a judicious change of variables, already
known to Feller \cite{Feller1}, any operator of the form (\ref{eq:genwf}) can be
reduced to a first order perturbation of the exact Wright-Fisher operator
$L_{\mathrm{WF}}$.

Given the operator (\ref{eq:genwf}), assume (by rescaling if necessary) that
\[
\int\limits_0^1\frac{ds}{\sqrt{a(s)}}=\pi.
\]
For the heat equation this amounts to rescaling the time variable.
We define a diffeomorphism of $[0,1]$ by setting:
\[
\xi(x)=\sin^2\left(\frac{\eta(x)}{2}\right), \qquad \mbox{where}\qquad
\eta(x)= \int\limits_0^x\frac{ds}{\sqrt{a(s)}}.
\]
 A short computation shows that $L$ becomes
\begin{equation}
  \xi (1- \xi) \frac{d^2\,}{d\xi^2} + \tilde{b}(\xi)\frac{d\,}{d\xi},
\end{equation}
where $\tilde{b}(\xi)$  has the same properties as the original function
$b(x)$, i.e.\ it is smooth on the closed interval, and $\tilde{b}(0) = b(0)$,
$\tilde{b}(1) = b(1)$, so in particular is inward pointing at the two boundary
points, and vanishes at a boundary point if and only if $b$ does.

Henceforth we return to using $x$ as the independent variable and $a$ and $b$
as the coefficient functions. We assume that our general Wright-Fisher operator
has the form
\begin{equation}
L = x(1-x)\pa_x^2 + b(x)\pa_x,
\label{eq:genwfr}
\end{equation}
where $b(0), -b(1) \geq 0$. As noted earlier, the case of principal interest in
population genetics is when these values lie in $[0,1),$ the so called ``weak
mutation'' regime; however, it is no more difficult to treat the general case.
In our analysis it is actually necessary to
consider larger values of these constants, to obtain higher order estimate for the cases
when $|b(x)|$  lies in $[0,1).$

\section{Natural boundary problems}\label{natbvp}
Because $L$ is degenerate at the boundary of the interval $[0,1]$, we must
carefully specify how boundary conditions for $L$ are formulated. The main
observation is that operator $\tilde{L} := x(1-x)L$ has regular (Fuchsian)
singularities at the two boundary points $x=0$ and $x=1$, hence may be studied
by standard ODE methods.  This is something of a red herring, as it is the
resolvent operator $(\lambda-L)^{-1}$ that governs the behavior of the heat
kernel; the resolvents of $L$ and $\tL$ are fundamentally different.  The
simple vanishing of the coefficient of the second order term in $L$ plays a
crucial role in the analytic properties of the solutions to the associated heat
equation and their applications in stochastic processes and population
genetics. In particular, the diffusion processes associated to $L$ are
qualitatively quite different from those associated to $\tilde{L}$.

We define the notion of indicial roots for $L$. The complex number $s$
is called an indicial root for this operator at $x=0$ if
\[
L x^s = \calO(x^{s}).
\]
Note that for any $s$, $Lx^s = x^{s-1} c(s,x)$ where $c$ is smooth up
to the boundary, so $s$ is an indicial root only if some leading order
cancellation takes place. Indeed,
\[
Lx^s = (s(s-1) + b(0)s)x^{s-1} + \calO(x^{s})
\]
and this leading coefficient vanishes precisely when
\[
s (s + (b(0) - 1))  = 0 \qquad \Longleftrightarrow \qquad s = 0, 1 - b(0). 
\]
There are some subtle differences in the analysis of $L$ when $b(0) < 1$, $b(0)
= 1$ or $b(0) > 1$.  The case of principal interest in biology is when $0 \leq
b(0) < 1$ (and similarly, $0 \geq b(1) > -1$). For the boundary conditions we
are considering it is no more difficult to handle the general case where $b(0)
\geq 0$, and, for technical reasons, it is actually very useful to do so. The
case $b(0) < 0$ behaves quite differently, mainly due to the loss of the
maximum principle.

It is well known that if $b(0) \in \RR^+$, $b(0) \notin \{0,1,2, \ldots\}$,
then any solution of $Lu = 0$ satisfies
\[
u = u_1(x) + x^{1-b(0)}u_2(x), \qquad \mbox{where} \qquad u_1, u_2 \in \calC^\infty([0,1)).
\]
We call the leading coefficients $u_1(0)$ and $u_2(0)$ the Dirichlet and
Neumann data of $u$, respectively. If $u$ solves the equation
formally, i.e.\ in the sense of Taylor series, we see that if $u_j(0) = 0,$ then
$u_j(x)$ vanishes to infinite order at $x=0$; in particular, if $u_2(0) = 0$,
then $u(x) \in \calC^\infty([0,1))$. When $b(0) = 0$, the expansion has the
slightly different form
\[
u = u_1(x) + x \log x \, u_2(x), \qquad u_1, u_2 \in \calC^\infty([0,1)),
\]
and we again consider $u_1(0)$ and $u_2(0)$ as the Dirichlet and Neumann data,
so $u \in \calC^\infty([0,1))$ if and only if $u_2(0) = 0$.  If $b(0) \in
{\mathbb N}$, then there is a regular solution $u_1,$ with $u_1(0)$
non-vanishing, and a singular solution, $u_2$ of the form
\begin{equation}
  u_2(x)=x^{1-b(0)}u_{2r}(x)+\log x\, u_{2l}(x),\quad u_{2r},\, u_{2l} \in \calC^\infty([0,1)),
\end{equation}
with both $u_{2r}(0)$ and $u_{2l}(0)$ non-zero. A solution is regular at zero
if and only if it is bounded as $x\to 0^+.$

There is an almost identical formal analysis of the asymptotics of solutions to $Lu =
f$ where $f \in \calC^\infty([0,1])$, and for each of these cases we have the
\begin{proposition}
  Let $u$ be a solution to $Lu = f$ where $b(0) \geq 0$, $b(1) \leq 0$, and $f
  \in \calC^\infty([0,1])$. Then $u$ is smooth up to $x=0$ if $u_2(0)=0$.
\end{proposition}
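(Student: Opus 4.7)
The plan is to use the formal decomposition $u = u_1(x) + x^{1-b(0)} u_2(x)$ (or its logarithmic analogue in the resonant cases) given above, and show that the hypothesis $u_2(0) = 0$ forces the singular contribution to vanish to infinite order at $x=0$, so that $u$ extends smoothly across the boundary.

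The first step is a conjugation computation. Writing $b(x) = b(0) + x b_1(x)$ with $b_1 \in \calC^\infty([0,1])$, a direct calculation yields
\begin{equation*}
L\bigl(x^{1-b(0)} v\bigr) \;=\; x^{1-b(0)}\, \tilde{L}\, v,
\end{equation*}
where
\begin{equation*}
\tilde{L} v \;=\; x(1-x) v'' + \bigl[2(1-b(0))(1-x) + b(x)\bigr] v' + (1-b(0)) c(x)\, v,
\end{equation*}
with $c(x) = b(0) + b_1(x) \in \calC^\infty([0,1])$. The operator $\tilde{L}$ is Fuchsian at $x=0$ with indicial roots $0$ and $b(0)-1$. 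Substituting the ansatz for $u$ into $Lu=f$ produces the identity
\begin{equation*}
L u_1 + x^{1-b(0)}\, \tilde{L} u_2 \;=\; f,
\end{equation*}
in which $Lu_1$, $\tilde{L} u_2$, and $f$ are all smooth on $[0,1)$.

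In the non-resonant case $b(0) \in \RR^+ \setminus \{1,2,\ldots\}$, the Taylor series of $Lu_1$ and of $x^{1-b(0)}\tilde{L} u_2$ are supported on the disjoint exponent sets $\{0,1,2,\ldots\}$ and $\{k+1-b(0) : k \geq 0\}$; matching coefficients in the asymptotic expansion at $x=0$ then forces $\tilde{L} u_2$ to vanish to infinite order at $x=0$ and $Lu_1 = f$ to infinite order there. The formal recursion coming from $\tilde{L} u_2 \equiv 0$ is non-degenerate at every positive integer $k$ (the indicial factor $k(k+1-b(0))$ is nonzero for $k\geq 1$ under non-resonance), so the leading condition $u_2(0)=0$ cascades to $u_2^{(k)}(0)=0$ for every $k\geq 1$. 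Thus $u_2$ vanishes to infinite order at $x=0$. A short direct estimate then shows that $x^{1-b(0)}u_2(x)$ is itself $\calC^\infty$ at $x=0$ with all derivatives vanishing there (using $b(0)>0$, so that the singular factors $(x^{1-b(0)})^{(j)}$ are dominated by the infinite-order vanishing of $u_2$ and its derivatives). Hence $u = u_1 + x^{1-b(0)}u_2 \in \calC^\infty$ near $x=0$.

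The resonant cases $b(0) \in \{0,1,2,\ldots\}$ are handled analogously using the logarithmic ansatz recorded just before the proposition. Because $L$ preserves the grading by the power of $\log x$ in a strictly triangular fashion (differentiation of $(\log x)^m$ produces $(\log x)^{m-1}/x$ and no higher log-powers), one can work one log-weight sector at a time. The same conjugation computation in each sector yields a triangular homogeneous recursion on the coefficients of $(u_{2r}, u_{2l})$, and the vanishing of the leading singular data (encoded in $u_2(0)=0$) again propagates to all orders. The principal obstacle is the careful bookkeeping in this resonant case: one must verify that smoothness of $f$ keeps the singular and logarithmic sectors genuinely homogeneous, so that the triangular recursion on their Taylor coefficients is driven only by the vanishing of the prescribed leading data.
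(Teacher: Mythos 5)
Your argument is correct and is essentially the paper's own reasoning: the proposition rests on the formal Frobenius analysis at the regular singular point $x=0$ described just before its statement, and your conjugation identity $L(x^{1-b(0)}v)=x^{1-b(0)}\tilde L v$ together with the disjointness of the exponent sets $\{k\}$ and $\{k+1-b(0)\}$ and the nonvanishing indicial factors $k(k+1-b(0))$ is exactly the mechanism by which $u_2(0)=0$ forces $u_2$ to vanish to infinite order, hence $u\in\calC^\infty$ up to $x=0$. Your treatment of the resonant cases $b(0)\in\{0,1,2,\dots\}$ via the log-grading is only sketched, but it is the standard bookkeeping and is at the same level of detail as the paper, which states the result without proof.
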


Later in this paper we we also consider the adjoint $L^t$ of the operator $L$, defined formally by 
\[
\int_0^1 (Lu) v\, dx = \int_0^1 u (L^t v)\, dx, \qquad u, v \in \calC^\infty((0,1)).
\]
Integration by parts gives
\begin{equation}
\begin{array}{rcl}
  L^tu & = & \frac{d^2\,}{dx^2}( x(1-x)u) - \frac{d\,}{dx}(b(x)u) = 
\frac{d\,}{dx} \left( \frac{d\,}{dx}((1-x)u) - b(x)u \right) \\[0.5ex]
  & = & x(1-x)\frac{d^2\,}{dx^2} u + [2(1-2x) - b(x)] \frac{d\,}{dx} u - [2 + b'(x)]u.
\end{array}
\label{eq:adjoint}
\end{equation}
For historical reasons, $\pa_t - L$ is called Kolmogorov's backwards equation,
while $\pa_t - L^t$ is called Kolmogorov's forward equation.
A short calculation shows that the indicial roots of $L^t$ at $x=0$ are
\begin{equation}
s = 0, b(0) - 1.
\label{eq:indrtsadj}
\end{equation}

Suppose that $u \in \calC^\infty([0,1]).$ If $b(0), -b(1) \notin\bbN\cup\{0\}$,
then we see that $\int_0^1 (Lu) v\, dx = \int_0^1 u (L^t v)\, dx$ holds if and
only if
\begin{equation}
\begin{array}{l}
\lim_{x\to 0^+}[\pa_x(xv(x))-b(0)v(x)]=0\text{ and } \\[0.5ex]
\lim_{x\to 1^-}[\pa_x((1-x)v(x))+b(1)v(x)]=0.
\end{array}
\label{eq:adjbc}
\end{equation}
These limits characterize the adjoint boundary condition.  Loosely speaking, a
function satisfying these conditions is of the form:
$x^{b(0)-1}(1-x)^{-(1+b(1))}\tv(x),$ with $\tv$ smooth at $0$ and $1.$ In this
connection, the boundary conditions for $L$ are often formulated as ``zero
flux'' conditions:
\begin{equation}
  \lim_{x\to 0^+}x^{b(0)}\pa_xu(x)=0,\quad
\lim_{x\to 1^-}(1-x)^{-b(1)}\pa_xu(x)=0.
\end{equation}
We shall return to a more precise discussion of the adjoint operator in
\S~\ref{adjsmgrp}.

\section{Reduction to model problems on the half-line}\label{modprob1}
The first step in the construction of the parametrix is to localize the
analysis to  neighborhoods of  the boundary points. This leads us
to study certain model problems, which are the focus of much of the rest of
the paper. Indeed, we establish a maximum principle, and also derive
explicit formul\ae\ for the fundamental solutions of these model operators, and
using these results, prove sharp mapping properties for the heat kernels
of these model problems. Finally, by perturbation and standard parametrix
methods, we prove analogous results for general Wright-Fisher operators.

Our goal is the construction of the ``heat kernel'' or solution operator for
the initial value problem for the diffusion equation:
\begin{equation}\label{genWFivp}
 \pa_tu- [x(1-x)\pa_x^2+b(x)\pa_x]u=0\text{ with }u(x,0)=f(x),
\end{equation}
where we recall that an explicit change of variable has reduced the coefficient to $\pa_x^2$
to this normal form. The function $b$ is assumed to be smooth on $[0,1],$ with
$b(0)$ and $-b(1)$ non-negative. As above, we let $L$ denote the differential operator
\[
Lu=\left[x(1-x)\pa_x^2+b(x)\pa_x\right] u.
\]
The additional requirements that
\begin{equation}\label{WFregcnds}
\begin{split}
u\in\cC^0([0,1]\times [0,\infty))&\cap\cC^1([0,1]\times (0,\infty))\text{ and, }\\
  \lim_{x\to 0^+,1^-}x(1-x)&\pa_x^2u(x,t)=0\text{ for }t>0,
\end{split}
\end{equation}
assure the uniqueness of the solution. This follows from a maximum principle
for super-solutions.
\begin{proposition} Let $u$ satisfy~\eqref{WFregcnds}, and $(\pa_t-L)u\leq 0$
in $[0,1]\times (0,\infty)$; for any $T>0$, let $D_T=[0,1]\times [0,T].$ Then
\[
\max_{\{(x,t)\in D_T\}}u(x,t)=\max_{\{x\in [0,1]\}}u(x,0).
\]
\end{proposition}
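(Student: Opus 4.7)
The plan is to argue by contradiction using the standard perturbation trick $v(x,t) = u(x,t) - \varepsilon t$, which strengthens the inequality to a strict one. Since $(\partial_t - L)v = (\partial_t - L)u - \varepsilon \leq -\varepsilon < 0$, and since it suffices (by letting $\varepsilon \downarrow 0$ at the end) to prove that
\[
\max_{D_T} v(x,t) = \max_{x \in [0,1]} v(x,0),
\]
I may assume for contradiction that $v$ attains its maximum on $D_T$ at some point $(x_0, t_0)$ with $t_0 \in (0,T]$.

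The argument then splits into three cases depending on the location of $x_0$. If $x_0 \in (0,1)$, classical calculus at an interior extremum gives $\partial_t v(x_0,t_0) \geq 0$, $\partial_x v(x_0,t_0) = 0$, and $\partial_x^2 v(x_0,t_0) \leq 0$. Since $x_0(1-x_0) > 0$, one immediately obtains $(\partial_t - L)v(x_0, t_0) \geq 0$, contradicting the strict inequality. If $x_0 = 0$, then since $v$ is $\mathcal{C}^1$ on $[0,1] \times (0,\infty)$ I still have $\partial_t v(0,t_0) \geq 0$; moreover, because $v$ attains a maximum at the left endpoint, the one-sided derivative satisfies $\partial_x v(0,t_0) \leq 0$. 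The regularity hypothesis $\lim_{x\to 0^+} x(1-x)\partial_x^2 u(x,t) = 0$ lets me evaluate $Lv$ at $x=0$ as simply $b(0)\,\partial_x v(0,t_0)$, and the assumption $b(0) \geq 0$ then forces $-b(0)\partial_x v(0,t_0) \geq 0$, so again $(\partial_t - L)v(0,t_0) \geq 0$, a contradiction. The case $x_0 = 1$ is handled symmetrically using $b(1) \leq 0$ and $\partial_x v(1,t_0) \geq 0$.

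Finally, sending $\varepsilon \to 0^+$ recovers the claimed equality of maxima for $u$ itself.

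The main obstacle is the boundary case: the degenerate coefficient $x(1-x)$ vanishes precisely where the second derivative may blow up, and nothing in the data assumptions gives classical $\mathcal{C}^2$-regularity up to $\{x=0,1\}$. The delicate point is therefore that the stated regularity condition $x(1-x)\partial_x^2 u \to 0$ is exactly strong enough to kill the second-order contribution to $Lv$ at the boundary, leaving only the first-order term whose sign is controlled by the inward-pointing hypothesis on $b$. Once one recognizes that this is the sole role played by the two technical assumptions in~\eqref{WFregcnds}, the rest of the proof reduces to routine case analysis.
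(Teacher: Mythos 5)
Your proof is correct and follows essentially the same route as the paper's: perturb to a strict supersolution (you use $u-\varepsilon t$, the paper uses $u+\epsilon(1+t)^{-1}$, an immaterial difference), rule out an interior maximum by elementary calculus, and rule out a maximum at $x=0,1$ by combining the $\mathcal{C}^1$ regularity up to the boundary, the hypothesis $x(1-x)\partial_x^2u\to 0$, and the sign conditions $b(0)\geq 0$, $b(1)\leq 0$. Your closing observation about the sole role of the regularity conditions in~\eqref{WFregcnds} matches the paper's reasoning exactly.
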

\begin{proof} Let $\epsilon>0,$ then $u_{\epsilon}=u+\epsilon(1+t)^{-1}$ is a
strict super-solution. The standard argument (see~\cite{John}) then shows that the maximum of
$u_{\epsilon}$ occurs along the distinguished boundary $\pa' D_T=\pa
D_T\setminus [0,1]\times\{T\}.$ The regularity hypotheses
in~\eqref{WFregcnds} show that the maximum cannot occur where $x=0$ or $1.$
For example, if the maximum occurred at $(0,t_0),$ then
$\pa_tu_\epsilon(0,t_0)=0,$ and $\pa_x u_{\epsilon}(0,t_0)\leq 0.$ The conditions $b(0)\geq 0,$ and
\[
\lim_{x\to 0^+}x(1-x)\pa_x^2u_{\epsilon}(x,t_0)=0,
\]
contradict the fact that $u_{\epsilon}$ is a strict super-solution. Thus
\[
\max_{\{(x,t)\in D_T\}}u(x,t) < \max_{\{(x,t)\in D_T\}}u_{\epsilon}(x,t)=\max_{\{x\in  [0,1]\}}u_{\epsilon}(x,0)<
\max_{\{x\in  [0,1]\}}u(x,0)+\epsilon.
\]
This estimate holds for any $\epsilon>0,$ which completes the proof of the proposition.
\end{proof}

The transformation
\[
y=\sin^2\sqrt{x}
\]
maps the interval $[0,(\pi/2)^2]$ bijectively onto $[0,1],$ is regular at $x=0,$ but becomes singular at $(\pi/2)^2.$ 
In the $x$-variable, the operator $L_{\WF}=y(1-y)\pa_y^2$ transforms to
\begin{equation}\label{WFnrmfrm}
L_{\WF}=x\pa_x^2+x\hc(x)\pa_x.
\end{equation}
Here $\hc(x)$ is a real analytic function in a neighborhood of $x=0,$ which includes $[0,(\pi/2)^2),$ with 
$\hc(0)=-1/3.$ More generally, $L$ is carried to 
\begin{equation}\label{WFnrmfrm2}
 L=x\pa_x^2+x\hc(x)\pa_x+\tc(x)\pa_x,
\end{equation}
where $\tc$ is again a smooth function which satisfies
\[
\tc(0)=b(0).
\]
The coefficient $\tb$ of the first order term has the form
\[
\tb(x)=b_0+xc(x),\text{ where }b_0=b(0).
\]
This suggests introducing the half-line model operators
\[
L_b=x\pa_x^2+b\pa_x.
\]
In this coordinate the operator $L$ is represented in the form
\[
L=L_{b_0}+xc(x)\pa_x.
\]
The first order term $xc(x)\pa_x$ is, in a precise sense, a lower order
perturbation of $L_{b_0}.$ We show, in the succeeding sections, how to
construct the solution operator for~\eqref{genWFivp} with regularity
conditions~\eqref{WFregcnds}, from the solution operators for $L_{b}.$ 
The next several sections are devoted to the analysis of these solution
operators.

\section{Maximum principles and uniqueness for the model operators}\label{modmxprn}
For the heat equation on Euclidean space, the maximum principle and the
resulting uniqueness of solutions on $\bbR^n \times [0,\infty),$ satisfying
given initial conditions, requires that we impose a growth hypothesis on
solutions at $|x|\to\infty$. In this section we establish analogous results for
the model operators $L_b$, $b\geq 0$.

For any $T>0$, we consider solutions on the domains
\[
D_T=[0,\infty)\times [0,T].
\]
\begin{proposition}\label{maxprinciple} Suppose that $b\geq 0,$ and
\begin{equation}\label{maxprnreg}
u\in\cC^0(D_T)\cap\cC^1([0,\infty)\times (0,T])\cap \cC^2((0,\infty)\times (0,T])
\end{equation}
is a super-solution for the heat equation associated to $L_b$, so
$\pa_t u-L_bu\leq 0$. Suppose further that
\begin{equation}\label{2ndderest}
\lim_{x\to 0^+}x\pa_x^2u(x,t)=0,\text{ for }0<t\leq T,
\end{equation}
and that for some $a,M > 0$, $|u(x,t)|\leq Me^{ax}$ for all $(x,t) \in D_T$; then
\[
\sup_{(x,t)\in D_T}u(x,t)=\sup_{x\in [0,\infty)}u(x,0).
\]
\end{proposition}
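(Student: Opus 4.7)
The strategy follows the finite-interval maximum principle proved just above, with one extra ingredient to handle the unbounded spatial domain: a barrier that dominates the growth hypothesis $|u|\le Me^{ax}$ as $x\to\infty$. On the compact slab $[0,R]\times[0,T]$ I will perturb $u$ by $\epsilon$ times a strict supersolution and argue, as in the bounded-interval proof, that the perturbed maximum must lie on the parabolic boundary; the left boundary $x=0$ is ruled out by the same $b\ge 0$/second-derivative-limit argument as before, and the interior points and the top face $\{t=T\}$ by the strict-supersolution property. The novelty is the right face $\{x=R\}$, where the barrier intervenes.

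The key observation is that for any $T_0>T$,
\[
\tilde\psi(x,t) := (T_0-t)^{-b}\exp\!\left(\frac{x}{T_0-t}\right)
\]
is an exact solution of $(\partial_t-L_b)\tilde\psi=0$ on $[0,\infty)\times[0,T]$, as a direct computation shows; moreover $\tilde\psi$ is smooth in $x$ up to the boundary, and its minimal spatial growth rate is $1/T_0$, so choosing $T_0<1/a$ makes $\tilde\psi(R,t)$ dominate $Me^{aR}$ uniformly in $t\in[0,T]$ as $R\to\infty$. (This is the analogue of the backward Gaussian used in the Tychonoff-type uniqueness proof for the ordinary heat equation.) With this in hand, set
\[
v_\epsilon(x,t) := u(x,t)-\epsilon\tilde\psi(x,t)+\frac{\epsilon}{1+t},
\]
so that $(\partial_t-L_b)v_\epsilon \le -\epsilon/(1+t)^2<0$: $v_\epsilon$ is a strict supersolution. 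Writing $U_0:=\sup_{[0,\infty)}u(\cdot,0)$, one checks that on the bottom face $v_\epsilon(x,0)\le U_0+\epsilon$; on $\{x=R\}$ with $R$ sufficiently large (depending on $\epsilon$), the barrier estimate yields $v_\epsilon(R,t)\le Me^{aR}-\epsilon T_0^{-b}e^{R/T_0}+\epsilon\le U_0+\epsilon$; and the remaining boundary pieces and interior points are handled exactly as in the finite-interval proposition. Hence $v_\epsilon\le U_0+\epsilon$ throughout $[0,R]\times[0,T]$, whence
\[
u(x,t) \le U_0 + \epsilon + \epsilon\tilde\psi(x,t) - \frac{\epsilon}{1+t}.
\]
Fixing $(x,t)$, choosing $R>x$ large enough, and sending $\epsilon\to 0$ gives $u(x,t)\le U_0$.

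The main obstacle is this barrier construction, and in particular the restriction $T<1/a$ it imposes: we need a heat solution whose $x$-growth rate strictly exceeds $a$, but the functions $\exp(x/(T_0-t))$ available to us require $T_0-t>0$ for all $t\in[0,T]$, forcing $T<T_0<1/a$. For arbitrary $T>0$, one completes the proof by iteration: having established the principle on $[0,T_1]$ with $T_1<1/a$, apply it in turn to $(x,t)\mapsto u(x,T_1+t)$, whose initial data $u(\cdot,T_1)$ satisfies the same pointwise bound $U_0$ by the previous step and which inherits all the hypotheses of the proposition, and continue in finitely many steps to cover $[0,T]$.
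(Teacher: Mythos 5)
Your proof is correct and follows essentially the same route as the paper: a barrier of the form $(\tau-t)^{-\text{power}}e^{x/(\tau-t)}$ together with the $\epsilon/(1+t)$ perturbation to get a strict supersolution, exclusion of a maximum on $x=0$ using $b\geq 0$, the $\cC^1$ regularity and the condition \eqref{2ndderest}, a Phragm\'en--Lindel\"of-type use of the growth bound that forces the restriction to time slabs of length less than $1/a$, and then iteration in $t$. The only deviations are cosmetic or mildly simplifying: you truncate at $x=R$ and estimate the right face rather than working directly on the unbounded strip, and you use the exact solution $(T_0-t)^{-b}e^{x/(T_0-t)}$ of $(\pa_t-L_b)\tilde\psi=0$ for every real $b\geq 0$ (your claimed identity does check out by direct computation), whereas the paper verifies this only for integer exponents via $v_{\tau,k}=\pa_x^k v_{\tau,0}$ and treats non-integer $b$ separately with the barrier $v_{\tau,\lceil b\rceil}$, exploiting only the sign of the resulting correction term.
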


\begin{proof} For $k$ a non-negative integer, define the function
\[
v_{\tau,k}(x,t)=\frac{1}{(\tau-t)^k}e^{\frac{x}{\tau-t}}=\pa_x^kv_{\tau,0}.
\]
Clearly $v_{\tau,k}(x,t)> 0,$ is smooth up to $x=0,$ and one readily checks that
$(\pa_t-L_k)v_{\tau,k}=0.$ We shall use the $v_{\tau,k}$ as barriers.

The proof is much the same as in the Euclidean case. We first treat the case
$b=k,$ a non-negative integer. Choose $\epsilon_1,\epsilon_2 > 0$ 
and $\tau \in (0,1/a)$, and consider the functions
\[
u_{\epsilon}(x,t)=u(x,t)-\epsilon_1 v_{\tau,b}(x,t)+\frac{\epsilon_2}{1+t}.
\]
These are defined in $D_{\tau}\cap D_T$ and satisfy
\begin{equation}\label{spsln1}
\pa_tu_{\epsilon}-L_bu_{\epsilon}<0
\end{equation}
there, hence are strict supersolutions. Our choice of $\tau$ ensures that $v_{\tau,b} \to
+\infty$ more rapidly than $e^{ax}$ as $x \to \infty$, so $u_\epsilon < 0$ for
$x$ sufficiently large, uniformly  for all $t \in [0,\tau]$; therefore, the
standard proof of the maximum principle shows that:
\begin{equation} \label{mxprn1}
u_{\epsilon}(x,t)\leq \sup_{(x,t)\in \pa^\prime (D_{\tau}\cap
  D_T)}u_{\epsilon}(x,t) \qquad \text{for}\quad  (x,t)\in D_{\tau}\cap D_T, 
\end{equation}
where $\pa^\prime D_T$ is the distinguished boundary $\left([0,\infty)\times
\{0\}\right)\cup \left(\{0\}\times [0,T)\right)$.  Clearly $\sup u_{\epsilon}$ is attained at some
point in $D_T$.

First observe that the maximum of $u_{\epsilon}(x,t)$ cannot occur along $x=0$ with $t>0.$ Indeed, 
suppose that the maximum occurs at $x=0,$ for some $t_0 \in (0,T)$.  If $b=0$, then the regularity 
of $u_{\epsilon}$ and \eqref{spsln1} give $\pa_tu_{\epsilon}(0,t)<0$, which contradicts that $u_\e$ 
reaches a maximum at $(0,t_0)$. On the other hand, if $b>0$, then~\eqref{spsln1} would imply that
\[
\pa_tu_{\epsilon}(0,t_0)=0\text{ and }\pa_xu_{\epsilon}(0,t_0)>0,
\]
which is also a contradiction since it shows that $u_{\epsilon}(0,t_0)<u_{\epsilon}(x,t_0)$ for $x$ small.  
If the maximum were to occur at $u(0,T)$, then $\pa_tu_{\epsilon}(0,T)\geq 0$, so as before,
$\pa_xu_{\epsilon}(0,T)>0$ and hence $u_{\epsilon}(0,T)<u_{\epsilon}(x,T),$
for small positive $x$.  Hence the maximum does not occur along $x=0.$ Note
that this argument relies strongly on the assumption of $\cC^1$ regularity up to $x=0$,
as well as~\eqref{2ndderest}.

All of this shows that we can replace~\eqref{mxprn1} by
\[
 u_{\epsilon}(x,t)\leq \sup_{x\in [0,\infty)}u_{\epsilon}(x,0)\leq
\sup_{x\in [0,\infty)}u(x,0) +\epsilon_2.
\]
Now letting $\epsilon_1,\epsilon_2\to 0,$ we conclude that
\begin{equation}\label{maxprn3}
\sup_{(x,t)\in D_{\tau}\cap D_T}u(x,t) \leq \sup_{x\in [0,\infty)}u(x,0).
\end{equation}
If $\tau\geq T,$ then this completes the proof of the proposition; otherwise 
repeat the argument recursively, with $u_1(x,t)=u(x,t+\tau)$ replacing
$u(x,t).$  Finitely many iterates produce the desired conclusion.

We now turn to non-integral values of $b.$ If $k\leq b<k+1,$ for a non-negative integer
$k,$ then 
\begin{multline*}
(\pa_t-L_b)\big( u-\epsilon_1v_{\tau,k+1}+\frac{\epsilon_2}{1+t}\big)=\\
(\pa_t-L_b)u+\epsilon_1(b-(k+1))v_{\tau,k+2}-\frac{\epsilon_2}{(1+t)^2} <0.
\end{multline*}
The inequality uses that $v_{\tau,k+2}> 0$. We can argue exactly as before with
\[
u_{\epsilon}(x,t)=u-\epsilon_1v_{\tau,k+1}+\frac{\epsilon_2}{t+1},
\]
to obtain~\eqref{maxprn3}, and iterate, if needed, to complete the proof of the proposition.
\end{proof}

The uniqueness of classical, ``tempered'' solutions is an immediate corollary:
\begin{corollary}
Let $u$ satisfy \eqref{maxprnreg}, \eqref{2ndderest}, and be a solution to $\pa_tu-L_bu=0$
for some constant $b\geq 0$.  If $u(x,0)=0$ and
\begin{equation}
|u(x,t)|\leq Me^{ax}\text{ for }(x,t)\in D_T
\label{eq:upperbound}
\end{equation}
for some $a,M > 0$, then $u\equiv 0$ in $D_T.$
\end{corollary}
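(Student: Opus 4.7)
The plan is to deduce this as a direct application of Proposition~\ref{maxprinciple} to $u$ and $-u$ in turn. Since $u$ solves $\pa_t u - L_b u = 0$ identically, both $u$ and $-u$ are (trivially) super-solutions, i.e.\ $(\pa_t - L_b)(\pm u) \leq 0$. The regularity hypotheses \eqref{maxprnreg} and \eqref{2ndderest}, together with the exponential bound \eqref{eq:upperbound}, transfer verbatim from $u$ to $-u$ (the bound $|\!-\!u| \leq Me^{ax}$ is the same as the bound on $u$).

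Applying Proposition~\ref{maxprinciple} to $u$ gives
\[
\sup_{(x,t)\in D_T} u(x,t) = \sup_{x\in [0,\infty)} u(x,0) = 0,
\]
since $u(x,0)\equiv 0$ by hypothesis. Applying the same proposition to $-u$ gives
\[
\sup_{(x,t)\in D_T} (-u(x,t)) = \sup_{x\in [0,\infty)} (-u(x,0)) = 0,
\]
i.e.\ $\inf_{D_T} u \geq 0$. Combining the two inequalities yields $u \equiv 0$ on $D_T$, as desired.

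There is no real obstacle here since all the analytic work was done in the proof of Proposition~\ref{maxprinciple}; the only thing one needs to note is that the proposition's hypotheses are symmetric under $u \mapsto -u$, which is immediate for the regularity and bound conditions and automatic for the equation (an equality being preserved under sign change, whereas an inequality would not be). Thus the corollary follows in one short step.
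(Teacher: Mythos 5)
Your argument is correct and is exactly the one the paper intends: the corollary is stated there as an immediate consequence of Proposition~\ref{maxprinciple}, obtained by applying the maximum principle to both $u$ and $-u$, noting that the regularity hypotheses, the bound \eqref{eq:upperbound}, and the equation are all symmetric under $u\mapsto -u$. Nothing further is needed.
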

\begin{remark} The discussion of indicial roots in Section~\ref{natbvp} shows
  that other solutions to the initial value problem for $\pa_t-L_b$, have an
  asymptotic expansion at $x=0$ with the term $x^{1-b}$, so even if the initial
  data is smooth, those solutions are not $\cC^1$ up to $x=0$ and do not
  satisfy~\eqref{2ndderest}. Following Feller, the local boundary condition
  that singles out the smooth solution is the zero flux condition:
\begin{equation}\label{noflux1}
\lim_{x\to 0^+}x^b\pa_xu(x,t)=0.
\end{equation}
\end{remark}

\section{Fundamental solutions for the model operator}\label{s.modfs}
Consider the heat equation
\begin{equation}
\pa_t u = L_b u, \qquad x \geq 0,\qquad u(0,x) = f(x)
\label{eq:mhe}
\end{equation}
with boundary condition at $x=0$ for $t > 0$ dictated by the demand that
$u(t,x) \in \calC^m$ up to $x=0$ for all $m \in {\mathbb N}$ provided the
initial condition $f$ also lies in $\calC^m$ (and has moderate growth as $x \to
\infty$). This is clearly needed for the solution operator
to~\eqref{eq:mhe} to define a semi-group on $\calC^m$.

The goal in this section is to find an explicit expression for the fundamental
solution of this problem.  Our strategy is as follows: We first study the
Fourier representation of the heat kernel for $L_0$; this turns out to be
somewhat simpler to analyze, and it suggests the general form of the
corresponding kernel when $b > 0$. It is useful to have both a Fourier
representation and the explicit Schwartz kernels of these operators, so we
include material describing both approaches.
\begin{remark} We would again like to thank Charlie Fefferman for showing us
  his construction of the kernel $k^0_t(x,y),$ essentially~\eqref{eq:hk0},
  on which all our subsequent development is based. This kernel also appears
  in~\cite{KellerTier}. 
\end{remark}

\subsection{Fundamental solution for $\pa_t - L_0$}
We begin by seeking a function $E^0(x,\xi,t),$ which is a solution to $\pa_t E^0
= L_0 E^0,$ with initial condition $E^0(x,\xi,0) = e^{ix\xi}$. A first
guess is that $E^0$ should involve a Gaussian, but because of the natural
appearance of the variable $\sqrt{x}$ in this problem (or, equivalently, the
fact that the dilation $(t,x) \mapsto (\lambda t, \lambda x)$ preserves the
equation under consideration), we are led to the ansatz $E^0(x,\xi,t) = \exp(x
\phi(\xi,t))$ for some as yet unknown function $\phi$. Inserting this into the
equation leads to the initial value problem
\[
\del_t \phi = \phi^2, \quad \phi(\xi,0) = ix\xi,
\]
which yields that
\begin{equation}
E^0(x,\xi,t) = \exp\left(\frac{-x}{t+i\xi^{-1}}\right) = \exp \left( \frac{ix\xi}{1 - it \xi } \right).
\label{eq:hkf0}
\end{equation}
As an oscillatory integral, the corresponding Schwartz kernel is 
\begin{equation}
k_t^0(x,y) = \frac{1}{2\pi} \int_{-\infty}^\infty E^0(x,\xi,t)e^{-iy\xi}\, d\xi.
\label{eq:hk0}
\end{equation}
With a bit of algebra, one sees that the integrand decreases exponentially as a function of $\mbox{Im}\,\xi$ 
when $x, t > 0$ and $y < 0$. Thus, a contour deformation shows that $k_t^0$ vanishes in this region. In
other words, $k_t^0(x,\cdot)$ is supported in $y \geq 0$ when $x,t > 0$. 

For the special case $b=0,$ the condition $u(0,t)=0$ defines a
$\cC^0_0$-semi-group, which turns out to be a sub-semi-group of  that defined
by $k_t^0.$ We call this the ``Dirichlet semi-group.'' Since $k_t^0(0,y) = \delta(y)$,
the kernel we have constructed is not the Dirichlet heat kernel, but can be
obtained by a simple modification:
\begin{equation}
k_t^{0,D}(x,y) = k_t^0(x,y) - e^{-x/t}\delta(y).
\label{eq:k0k0D}
\end{equation}
In other words, the fundamental solution for (\ref{eq:mhe}) with Dirichlet
boundary conditions at $x=0$ is
\begin{equation}
k_t^{0,D}(x,y) = \frac{1}{2\pi} \int_{-\infty}^\infty \left(e^{(ix\xi)/(1-it
\xi) - iy\xi} - e^{-x/t - iy\xi}\right)\, d\xi. 
\label{eq:hkd0}
\end{equation}
The integrand here again diverges like $1/\xi$ as $|\xi| \to \infty$, so we interpret
(\ref{eq:hkd0}) as an oscillatory integral via the regularization (obtained by
a formal integration by parts)
\begin{equation}\label{eqn183.1}
k_t^{0,D}(x,y)=-\left(\frac{x}{y}\right)\int\limits_{-\infty}^{\infty}
\frac{E(x,\xi;t)e^{-iy\xi}}{(t\xi+i)^2}\frac{d\xi}{2\pi}.
\end{equation}
This satisfies the same heat equation, but now vanishes $x=0$, so is indeed the
Dirichlet heat kernel.

It is possible to find an explicit expression for
$k_t^{0,D}$ in terms of elementary transcendental functions. First note that
$k_t^{0,D}$ is a function of
\[
\alpha =\frac{x}{t}\text{ and } \beta =\frac{y}{t}.
\]
Indeed, setting $\eta =\xi t,$ and $z = \eta + i$, we find that
\[
k_t^{0,D}(x,y)=-\left(\frac{\alpha}{t\beta}\right)\int\limits_{-\infty}^{\infty}
\frac{e^{\frac{-\alpha \eta }{\eta +i}}e^{-i\beta \eta }}{(\eta +i)^2}\frac{d\eta }{2\pi} = 
-e^{-(\alpha+\beta)}\left(\frac{\alpha}{t\beta}\right)\int\limits_{\Gamma_1}
\frac{e^{\frac{i\alpha}{z}-i\beta z}}{z^2}\frac{dz}{2\pi},
\]
where, by definition, $\Gamma_{\tau}=\{z:\: \Im z=\tau\}$. By an elementary contour deformation, 
\[
\int\limits_{\Gamma_{1}} \frac{e^{\frac{i\alpha}{z}-i\beta z}}{z^2}\frac{dz}{2\pi}= 
\int\limits_{\Gamma_{\tau}} \frac{e^{\frac{i\alpha}{z}-i\beta z}}{z^2}\frac{dz}{2\pi}-\int\limits_{|z|=1}
\frac{e^{\frac{i\alpha}{z}-i\beta z}}{z^2}\frac{dz}{2\pi}
\]
for any $\tau < 0$. The first term on the right vanishes since it can be made
arbitrarily small by letting $\tau \to -\infty$, so it suffices to compute the
integral on the unit circle. For this, note that the coefficient of $1/z$ in
\[
z^{-2}\exp(i\alpha/z) \times \exp(-i\beta z) = z^{-2}\sum_{j=0}^\infty \frac{1}{j !}(i\alpha/z)^j 
\sum_{k=0}^\infty \frac{1}{k !}(-i\beta z)^k
\]
is equal to
\[
\sum_{\ell=0}^\infty \frac{1}{\ell !\, (\ell+1) ! }(-i\beta )^{\ell+1}(i\alpha )^{\ell} = -i\beta 
\sum_{\ell=0}^\infty \frac{(\alpha \beta)^\ell}{\ell ! (\ell+1) !} = -i\beta I_1(2\alpha \beta),
\]
where $I_1$ is the modified Bessel function of order $1$. This yields, finally,
the explicit formula
\begin{equation}
k_t^{0,D}(x,y)=\frac{1}{t}e^{-\frac{x+y}{t}}\sqrt{\frac xy}I_1\left(\frac{2\sqrt{xy}}{t}\right)
\label{eq:shkd0}
\end{equation}
for the Dirichlet fundamental solution of the model heat equation. It is useful to
represent this kernel in an alternate form. For $b>0$ we define the entire functions
\[
\psi_b(z)=\sum_{j=0}^{\infty}\frac{z^j}{j!\Gamma(j+b)}.
\]
An elementary calculation shows that $\psi_b$ satisfies the ordinary
differential equation:
\begin{equation}\label{psiode}
z\psi_b''+b\psi_b'-\psi_b=0
\end{equation}
The Dirichlet heat kernel is then given by
\[
k_t^{0,D}(x,y)dy=\left(\frac{x}{t}\right)e^{-\frac{x+y}{t}}\psi_{2}
\left(\frac{xy}{t^2}\right)\frac{dy}{t}. 
\]
The kernel $k^0_t$ can be expressed as
\begin{equation}\label{k0decmp}
k^0_t(x,y)=e^{-\frac{x}{t}}\delta(y)+k^{0,D}_t(x,y).
\end{equation}

The relationship between $\psi_2$ and the $I$-Bessel function implies the
asymptotic expansion:
\[
\psi_2(z)\sim\frac{e^{2\sqrt{z}}}{\sqrt{4\pi}z^{\frac{3}{4}}} \left[1+\sum_{j=1}^\infty\frac{c_{2,j}}{z^{\frac j2}}\right].
\]
See~\cite[8.451.5]{GR}

\subsection{Fundamental solution for $\pa_t - L_b$}
We now undertake a similar analysis of the fundamental solution for the problem
(\ref{eq:mhe}) for $0 < b $. As explained in \S~\ref{natbvp} the boundary condition at
$x=0$ which should guarantee that solutions are smooth up to $x=0$ is the
analogue of the Neumann condition, i.e.\ the one which excludes the term
$x^{1-b}$. We denote by $k_t^b(x,y)$ the heat kernel for this problem with this
zero-flux conditions.

As before, we first determine the Fourier representation of $k_t^b$ using the ansatz that 
$E^b(t,x,\xi) = \psi(t\xi) e^{-x \phi(t,\xi)}$; this leads fairly directly to the expression
\begin{equation}
E^b(t,x,\xi) = (1-it\xi)^{-b} e^{\frac{ix\xi}{1-it\xi}},
\label{eq:hkfb}
\end{equation}
and hence, (when $b\leq 1$) as an oscillatory integral,
\begin{equation}
k_t^b(x,y) = \frac{1}{2\pi}\int_{-\infty}^\infty 
 \frac{e^{\frac{ix\xi}{1-it\xi} - iy\xi}}{(1-it\xi)^{b}}\, d\xi
\label{eq:hksfb}
\end{equation}
If $f\in\cC^0_c([0,\infty))$ has an absolutely integrable Fourier transform,
then this formula can be interpreted to mean that
\begin{equation}\label{fourepb}
\int\limits_0^{\infty}k_t^b(x,y)f(y)dy=\frac{1}{2\pi}\int\limits_{-\infty}^{\infty}
\frac{e^{\frac{ix\xi}{1-it\xi}}\hf(\xi)\, d\xi}{(1-it\xi)^{b}}.
\end{equation}

In order to evaluate \eqref{eq:hksfb}, we start off as before, setting $\alpha= x/t,$ 
$\beta=y/t,$ and expressing \eqref{eq:hksfb} as a contour integral over $\Im z=1:$
\[
k_t^b(x,y)=\frac{e^{\frac{\pi i b}{2}}}{2\pi t}
e^{-\frac{x+y}{t}}\lim_{R_1, R_2\to\infty}\int\limits_{-R_1+i}^{R_2+i}
e^{i\left(\frac{\alpha}{z}-\beta z\right)}z^{-b}\, dz.
\]
To define $z^{-b},$ we take $\arg z=0$ on the positive real axis and consider
$z^{-b}$ as defined on the plane cut along the negative imaginary axis.

Now change variables, setting $z=\sqrt{\alpha/\beta}\, \tau$, and for simplicity
write $\zeta=\sqrt{\alpha\beta}$, to write
\[
\int_{\Im z = 1} e^{i\left(\frac{\alpha}{z}-\beta z\right)}z^{-b}\, dz =
\sqrt{\frac{\alpha}{\beta}}
\int_{\Im \tau = \sqrt{\beta/\alpha}} e^{i\zeta\left(\frac{1}{\tau} - \tau\right)}\tau^{-b}\, d\tau.
\]
Since the integrand decays as $|\Re \tau| \to \infty$ and as $\Im \tau \to
-\infty$, we can deform the contour to yield 
\[
\Lambda=\{ye^{\frac{3\pi i}{2}}:y\in (\infty,1]\}\cup
\{e^{i\theta}:\:\theta \in [\frac{3\pi}{2},\frac{-\pi}{2}]\}\cup
\{ye^{\frac{-\pi i}{2}}:y\in [1,\infty)\}.
\]
Inserting the explicit parameterizations of the various parts of this contour
into the integrand
$e^{i\zeta\left(\frac{1}{\tau}-\tau\right)}\frac{d\tau}{\tau^b}$ and
simplifying yields the absolutely convergent representation:
\begin{multline}
  \int\limits_{\Lambda}
e^{\frac{\pi i b}{2}}e^{i\zeta\left(\frac{1}{\tau}-\tau\right)}\frac{d\tau}{\tau^b}=\\
\int\limits_{-\pi}^{\pi}e^{2\zeta\cos\phi}\cos(b-1)\phi d\phi-
\sin\pi(b-1)\int\limits_{1}^{\infty}e^{-\zeta(y+\frac{1}{y})}\frac{dy}{y^b}.
\notag
\end{multline}
Changing the variable to $y=e^t$ in the second integral on the right, we get
\[
 2\int\limits_{0}^{\pi}e^{2\zeta\cos\phi}\cos(b-1)\phi d\phi-
2\sin\pi(b-1)\int\limits_{0}^{\infty}e^{-2\zeta\cosh t}e^{-t(b-1)}dt.
\]
This combination of integrals appears as Formula 8.431.5 in~\cite{GR}, and is
seen to equal $2\pi I_{b-1}(2\zeta)$. Putting all of these calculations together
gives the two equivalent expressions
\begin{equation}
k_t^b(x,y)=\frac{1}{t}\left(\frac{x}{y}\right)^{\frac{1-b}{2}}e^{-\frac{x+y}{t}}
I_{b-1}\left(2\sqrt{\frac{xy}{t^2}}\right)
\label{eq:hkdb}
\end{equation}
and
\begin{equation}
k_t^b(x,y)\, dy =\left(\frac{y}{t}\right)^be^{-\frac{x+y}{t}}\psi_b\left(\frac{xy}{t^2}\right)\, 
\frac{dy}{y}.
\label{psifrm1}
\end{equation}
Using the classical asymptotics for $I_{b-1}$ we see that as $z \to \infty$, $\psi_b$ has 
the asymptotic expansion
\begin{equation}\label{psiexp1}
\psi_b(z)\sim\frac{z^{\frac14-\frac{b}2}e^{2\sqrt{z}}}{\sqrt{4\pi}}\left[1+
\sum_{j=1}^\infty\frac{c_{b,j}}{z^{\frac{j}2}}\right].
\end{equation}
From these explicit formul{\ae} it is evident that the kernels $k^b_t(x,y)$ are
pointwise positive. Once it is shown that they generate semi-groups, it 
follows that these semi-groups are positivity improving. The Fourier
representation has a very useful consequence:
\begin{lemma}\label{difnlem}
 If $f\in\cC^{m}_c([0,\infty))$ satisfies $\pa_x^j f(0)=0,$ for $1\leq j\leq  m,$  then
\[
\pa_x^j\int\limits_0^{\infty}k^b_t(x,y)f(y)\, dy= \int\limits_0^{\infty}k^{b+j}_t(x,y)\pa_y^jf(y)\, dy,
\text{ for }1\leq j\leq m.
\]
\end{lemma}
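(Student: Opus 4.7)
The plan is to prove the lemma by induction on $j$, based on the pointwise identity
\begin{equation*}
\pa_x k_t^b(x,y)=-\pa_y k_t^{b+1}(x,y).
\end{equation*}
I would establish this identity first. The quickest route uses the Fourier representation \eqref{eq:hksfb}: differentiation in $x$ multiplies the integrand $(1-it\xi)^{-b}e^{ix\xi/(1-it\xi)}e^{-iy\xi}$ by $i\xi/(1-it\xi)$, which converts $(1-it\xi)^{-b}$ into $(1-it\xi)^{-(b+1)}$ and produces an extra factor of $i\xi$; since $i\xi e^{-iy\xi}=-\pa_y e^{-iy\xi}$, the identity follows. Alternatively one can verify it directly from \eqref{psifrm1} using the two algebraic relations $\psi_b'=\psi_{b+1}$ (immediate from the series) and $\psi_b=z\psi_{b+2}+b\psi_{b+1}$ (the ODE \eqref{psiode} rewritten after substituting the first relation).

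For the base case $j=1$, I would differentiate under the integral and apply the identity to get
\begin{equation*}
\pa_x\int_0^\infty k_t^b(x,y)f(y)\,dy=-\int_0^\infty \pa_y k_t^{b+1}(x,y)\, f(y)\,dy,
\end{equation*}
then integrate by parts in $y$. The boundary contribution at $y=\infty$ vanishes by the compact support of $f$; the term at $y=0^+$ is $-k_t^{b+1}(x,0)f(0)$, and \eqref{psifrm1} shows $k_t^{b+1}(x,y)\sim t^{-(b+1)}e^{-x/t}\psi_{b+1}(0)\, y^{b}$ as $y\to 0^+$, which vanishes for $b>0$. This proves the case $j=1$.

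For the inductive step, suppose the formula holds at level $j<m$, and apply the $j=1$ case with $b$ replaced by $b+j$ and $f$ replaced by $f^{(j)}\in \cC^{m-j}_c([0,\infty))$. The only new boundary contribution produced by the integration by parts is $-k_t^{b+j+1}(x,0)f^{(j)}(0)$, which vanishes because $j\ge 1$ and the hypothesis $\pa^j f(0)=0$ for $1\le j\le m$ forces $f^{(j)}(0)=0$. This closes the induction and yields the claim for all $1\le j\le m$.

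The main technical point is rigorously justifying both the differentiation under the integral and the integration by parts near $y=0$ when $b+j\in(0,1)$, since in that range $k_t^{b+j}(x,y)\sim y^{b+j-1}$ is integrable but unbounded there. I would handle this by truncating to $[\ep,\infty)$, performing the operations on that interval where the kernel is smooth, and passing to the limit $\ep\to 0^+$; the convergence of the would-be boundary contributions at $y=\ep$ to the values asserted above follows from the explicit series expansion of $\psi_b$ near $0$, which gives a precise asymptotic description of $k_t^{b+j}(x,y)$ as $y\to 0^+$.
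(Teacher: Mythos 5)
Your route is genuinely different from the paper's. The paper works entirely on the Fourier side: starting from \eqref{fourepb}, differentiation in $x$ brings down the factor $i\xi/(1-it\xi)$, the vanishing hypotheses are used to identify $(i\xi)^j\hf(\xi)$ with $\widehat{\pa_x^jf}(\xi)$, and the extreme case $j=m$ is handled by approximating $f$ in the $\cC^m$-norm by data with $\xi^m\hf\in L^1$. You instead work on the kernel side, via the identity $\pa_xk^b_t(x,y)=-\pa_yk^{b+1}_t(x,y)$ (which is correct, and your verification through $\psi_b'=\psi_{b+1}$ and the ODE \eqref{psiode} is exactly the right computation), followed by integration by parts in $y$ and induction on $j$. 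This is a legitimate alternative and in some ways more self-contained: it avoids the integrable-Fourier-transform hypothesis and the limiting argument, at the price of having to control the kernel near $y=0$, which you do correctly via the series for $\psi_b$ when $b>0$. One small slip: the boundary contribution at $y=0^+$ is $+\,k^{b+1}_t(x,0^+)f(0)$, not $-\,k^{b+1}_t(x,0)f(0)$; this is harmless where the term vanishes, but the sign matters below.

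The genuine gap is the case $b=0$, which is within the scope of how the lemma is used later (it is applied for every $b\ge 0$, e.g.\ in the lemma preceding the proof of Proposition~\ref{prop99} and in the chain leading to Corollary~\ref{cor33}). For $b=0$ the kernel is not given by \eqref{psifrm1} alone: by \eqref{k0decmp}, $k^0_t(x,y)=e^{-x/t}\delta(y)+k^{0,D}_t(x,y)$, so your pointwise identity must be read distributionally, and the boundary term produced by your integration by parts is $k^{1}_t(x,0^+)f(0)=t^{-1}e^{-x/t}f(0)$, which does \emph{not} vanish. The assertion is still true, because this term is exactly canceled by differentiating the atomic part, $\pa_x\bigl(e^{-x/t}f(0)\bigr)=-t^{-1}e^{-x/t}f(0)$; with the corrected sign of the boundary term this cancellation closes the case $j=1$, $b=0$, and your inductive step is unaffected since there $b+j\ge 1$ and the relevant kernel vanishes at $y=0$ like $y^{b+j}$. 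So the repair is short, but as written your base case establishes the lemma only for $b>0$.
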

\begin{proof} This follows easily from~\eqref{fourepb}, as the hypothesis implies that we can differentiate under 
the integral sign. The vanishing conditions on $\pa_x^jf(0)$ give that $\xi^{m-1}\hf(\xi) \in L^1(\bbR)$, so that 
\[
\widehat{\pa_x^jf}(\xi)=(i\xi)^j\hf(\xi),\text{ for }1\leq j\leq m-1.
\]
If $\xi^m\hf(\xi) \in L^1(\bbR),$ then we can immediately extend the formula to
$j=m.$ In general, approximate $f$ in the $\cC^m$-norm by a sequence $\{f_n\}
\subset\cC^m,$ which satisfies $\pa_x^j f(0)=0,$ for $1\leq j\leq m$ and this
additional integrability hypothesis. A limiting argument then establishes the
truth of the lemma for $j=m$ in the stated generality.
\end{proof}

\begin{remark} If $0<b<1,$ then, as shown in~\cite{Feller1}, one can define the
  Dirichlet problem for $\pa_t-L_b.$ For this range of parameters this
  corresponds to having a non-zero coefficient of $x^{1-b},$ hence a non-smooth
  solution. For $b\geq 1,$ Feller showed that~\eqref{noflux1} is the unique
  local boundary condition that defines a positivity preserving semigroup.
\end{remark}

\subsection{Representation formula}
As a first application of the existence of these fundamental solutions, we prove a 
representation formula for the solution to the initial value problem
\begin{equation}\label{ivpb1}
\pa_t u=L_bu\text{ with }u(x,0)=f(x),
\end{equation}
where $f$ and $u$ satisfy (\ref{eq:upperbound}) and in addition,
\begin{equation}\label{bcb}
\lim_{x\to 0^+}x^b\pa_xu(x,t)=0\text{ for  }b>0,\quad
\lim_{x\to 0^+}x\pa_xu(x,t)=0\text{ if }b=0.
\end{equation}
Note that if $u$ is in $\cC^1(D_T),$ then it automatically satisfies
this boundary condition.
\begin{proposition}\label{prop3} Suppose that $u\in\cC^0(D_T)\cap\cC^2((0,\infty)\times (0,T))$
satisfies~\eqref{ivpb1},~\eqref{bcb}. If $f, u, \pa_t u$ and $\pa_xu$ satisfy
(\ref{eq:upperbound}) then for all $t<a^{-1}$, $u$ is given by the absolutely
convergent integral:
\[
u(x,t)=\int\limits_{0}^{\infty}k^b_t(x,y)f(y)dy.
\]
\end{proposition}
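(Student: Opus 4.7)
The strategy is a Duhamel-type argument. Fix $(x_0, t_0)$ with $0 < t_0 < a^{-1}$ and introduce the pairing
\[
J(s) := \int_0^\infty u(y,s)\, k_{t_0-s}^b(x_0, y)\, dy, \qquad s \in (0, t_0).
\]
I would show that $J$ is constant on $(0, t_0)$ and identify its two endpoint limits as, respectively, $\int_0^\infty k_{t_0}^b(x_0,y) f(y)\, dy$ and $u(x_0, t_0)$, which is the desired identity.

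First I would verify absolute convergence and differentiability of $J$. From \eqref{psiexp1}, $k_\tau^b(x_0, y)$ behaves like $y^{b-1}$ as $y \to 0^+$ and like $y^{b/2 - 3/4}\exp(-(\sqrt{y}-\sqrt{x_0})^2/\tau)$ as $y \to \infty$. The growth hypothesis $|u|, |\pa_t u|, |\pa_x u| \le M e^{ay}$ makes the integrand and its $s$-derivative absolutely integrable precisely when $\tau = t_0 - s < a^{-1}$, which is the standing assumption; this is exactly where the bound $t_0 < a^{-1}$ enters.

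The heart of the argument is the adjoint identity
\[
\pa_\tau k_\tau^b(x,y) = L_{b,y}^t k_\tau^b(x,y), \qquad L_b^t = y\pa_y^2 + (2-b)\pa_y,
\]
which I would derive by combining the symmetry $k_\tau^b(y,x) = (y/x)^{1-b}\, k_\tau^b(x,y)$ (visible in \eqref{eq:hkdb}) with the fact that $k_\tau^b(\cdot,x_0)$ satisfies $\pa_\tau k = L_{b,y}k$ in its first argument. Substituting this identity and $\pa_s u = L_b u$ into $J'(s)$ and using the divergence form
\[
k\,L_b u - u\, L_b^t k = \pa_y\bigl[\,y(k\pa_y u - u\pa_y k) + (b-1)\, u k\,\bigr],
\]
reduces $J'(s)$ (computed on an exhausting $(\epsilon, R)$ and then passed to the limit) to boundary terms. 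At $y \to \infty$ they vanish because $k$ decays like $e^{-y/\tau}$ against the at-most $e^{ay}$ growth of $u,\pa_y u$. At $y \to 0^+$, expanding $k_\tau^b(x_0,y) = A y^{b-1} + B y^b + O(y^{b+1})$ (from $\psi_b(z) = 1/\Gamma(b) + z/\Gamma(b+1) + O(z^2)$), one sees that the leading $y^{b-1}$ contributions from $(b-1)uk$ and $-y u\pa_y k$ cancel exactly, while $y k \pa_y u \sim A\, y^b \pa_y u \to 0$ by the zero-flux condition \eqref{bcb}. Hence $J'(s) \equiv 0$.

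It remains to take limits in $s$. As $s \to 0^+$, dominated convergence (majorant $M e^{ay} k_{t_0}^b(x_0, y)$, pointwise convergence by continuity of $u$) gives $J(0^+) = \int_0^\infty k_{t_0}^b(x_0,y)f(y)\,dy$. As $s \to t_0^-$, the approximate-identity property $\int k_\tau^b(x_0,y) g(y)\,dy \to g(x_0)$ for continuous $g$ (cleanly obtained from the Fourier representation \eqref{fourepb} since $E^b(x_0,\xi,\tau) \to e^{ix_0\xi}$ as $\tau \to 0$), together with continuity of $u$ at $(x_0,t_0)$, yields $J(t_0^-) = u(x_0, t_0)$. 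Combining the two limits completes the proof.

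The main obstacle is the boundary analysis at $y = 0^+$: when $b < 1$, each of the pieces $y u\pa_y k$, $(b-1)uk$, and $y k \pa_y u$ individually blows up like $y^{b-1}$, so the argument depends on exploiting the precise coefficients in the short-distance expansion of $k_\tau^b(x_0, y)$ (inherited from $\psi_b$) together with the zero-flux condition on $u$ to produce the exact cancellation that kills the leading singularity.
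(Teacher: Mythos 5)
Your argument is, for $b>0$, essentially the paper's own proof: the paper also pairs $u(y,s)$ against $k^b_{t-s}(x,y)$, integrates by parts in $s$ and in $y$, uses exactly the adjoint facts you state, namely $\pa_t k^b_t(x,y)=(\pa_y^2 y-b\pa_y)k^b_t(x,y)$ together with $\lim_{y\to 0^+}[\pa_y(y k^b_t(x,y))-b\,k^b_t(x,y)]=0$, kills the terms at $y\to\infty$ with the bound $k^b_t(x,y)\leq C y^{(2b-3)/4}e^{-y/t}$ against the $e^{ay}$ growth (this is where $t<a^{-1}$ enters), and then lets the time cutoff go to the endpoints. Your constancy-of-$J(s)$ formulation and the paper's $\epsilon$-truncated double integral are the same computation, and your identification of the $y\to 0^+$ cancellation as coming from $\pa_y(yk)-bk\to 0$ plus the zero-flux condition $y^b\pa_y u\to 0$ is exactly what the paper means by ``integrate by parts in $y$ using the boundary conditions satisfied by $u$ and $k^b_{t-s}$.''

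The one genuine omission is the case $b=0$, which the proposition does cover (the hypothesis~\eqref{bcb} has a separate clause for $b=0$) and which the paper treats separately. For $b=0$ your short-distance expansion $k^b_\tau(x_0,y)=Ay^{b-1}+By^b+O(y^{b+1})$ and the divergence-form integration by parts break down, because $k^0_t(x,y)=e^{-x/t}\delta(y)+k^{0,D}_t(x,y)$ contains an atom at $y=0$; the delta term must be handled on its own. The paper's fix is to write $u=u_0+f(0)$, where $u_0$ vanishes at $x=0$, run the same integration-by-parts argument with the Dirichlet kernel $k^{0,D}_t$ to get $u_0(x,t)=\int_0^\infty k^{0,D}_t(x,y)(f(y)-f(0))\,dy$, and then reassemble using $k^0_t=e^{-x/t}\delta(y)+k^{0,D}_t$ and the fact that $k^0_t$ has total integral one. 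You would need to add this (short) branch to have a complete proof of the stated result.
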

\begin{proof} For $b>0,$ the proof of this proposition is a simple integration by parts
  argument. It uses the fact that $k^b_t$ also satisfies the adjoint equation
  and boundary condition: if $t>0,$ then
\[
\pa_t k^b_t(x,y)=(\pa_y^2y-b\pa_y)k^b_t(x,y)\quad \text{and } \lim_{y\to
  0^+}[\pa_y yk^b_t(x,y)-bk^b_t(x,y)]=0.
\]
Moreover, it is not difficult to show that as $y\to\infty,$  with $x$ in a
compact set, we have 
\begin{equation}\label{kbnlrgy}
k^b_t(x,y)\leq Cy^{\frac{2b-3}{4}}e^{-\frac{y}{t}}.
\end{equation}

With these preliminaries, we can integrate the equation satisfied by $u$ to obtain
\[
0=\int\limits_{0}^{t-\epsilon}\int\limits_{0}^{\infty} k^b_{t-s}(x,y) \, (\pa_su(y,s)-L_bu(y,s)) \, dyds.
\]
Here $\epsilon$ is a small positive number. Provided $t<a^{-1},$ the estimates
satisfied by $u,\pa_t u,\pa_x u,$ and~\eqref{kbnlrgy} justify the subsequent
manipulations of this integral.

We first integrate by parts in $s$ to obtain:
\begin{multline*}
  \int\limits_0^{\infty}[u(t-\epsilon,y)k^b_{\epsilon}(x,y)- u(0,y)k^b_{t}(x,y)]\, dy=\\
  \int\limits_0^{\infty}\int\limits_0^{t-\epsilon}\left[L_bu(s,y)k^b_{t-s}(x,y)
    - u(s,y)\pa_tk^b_{t-s}(x,y)\right]\, dsdy.
\end{multline*}
Now integrate by parts in $y$ using the boundary conditions satisfied by $u$
and $k^b_{t-s}$ at $y=0,$ and the estimates~\eqref{eq:upperbound}
and~\eqref{kbnlrgy}, to conclude that since $t<a^{-1},$ then
\begin{multline*}
  \int\limits_0^{\infty}[u(t-\epsilon,y)k^b_{\epsilon}(x,y)- u(0,y)k^b_{t}(x,y)]dy=\\
  \int\limits_0^{\infty}\int\limits_0^{t-\epsilon}u(s,y)\left[L_b^tk^b_{t-s}(x,y)-
    \pa_tk^b_{t-s}(x,y)\right]dsdy.
\end{multline*}
For any $\epsilon>0,$ the right hand side of this equation vanishes identically, and therefore
\[
\int\limits_0^{\infty}u(t-\epsilon,y)k^b_{\epsilon}(x,y)dy= \int\limits_0^{\infty}u(0,y)k^b_{t}(x,y)dy.
\]
Letting $\epsilon\to 0$ gives the result. 

For $b=0$ we proceed a little differently. If we write $u=u_0+f(0),$ then $u_0$ also satisfies 
$(\pa_t-L_0)u_0=0$, $u_0(0,t) = 0$, and the same regularity conditions as those satisfied by $u$.
Arguing as above, 
\[
u_0(x,t)=\int\limits_{0}^{\infty}k^{0,D}_t(x,y)(f(y)-f(0))\, dy, 
\]
and hence
\[
u(x,t)=\int\limits_{0}^{\infty}k^{0}_t(x,y)f(y)\, dy.
\]
\end{proof}

As a special case we can demonstrate that the kernels $\{k^b_t:\: t>0\}$ have the semi-group property.
\begin{corollary} If $t,s$ are positive numbers and $b> 0,$ then
\[
k^b_{t+s}(x,y)=\int\limits_{0}^{\infty}k^b_t(x,z)k^b_s(z,y)dz.
\]
\end{corollary}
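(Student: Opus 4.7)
The plan is to apply Proposition~\ref{prop3} to the function
\[
u(x,t) := k^b_{t+s}(x,y), \qquad (x,t)\in[0,\infty)\times[0,\infty),
\]
for arbitrary fixed $y,s>0$, regarded as the solution of the initial value problem \eqref{ivpb1} with initial data $f(x) := k^b_s(x,y)$. Granting the hypotheses of Proposition~\ref{prop3}, its conclusion reads
\[
k^b_{t+s}(x,y) \;=\; u(x,t) \;=\; \int_0^\infty k^b_t(x,z)\, k^b_s(z,y)\, dz,
\]
which is precisely the semi-group identity; so the only work is to check the hypotheses.

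First I verify regularity, the equation, the initial condition, and the boundary condition \eqref{bcb}. The representation \eqref{psifrm1}, combined with the entirety of $\psi_b$, shows that for every fixed $y,t>0$ the function $x\mapsto k^b_t(x,y)$ is smooth up to $x=0$, and jointly smooth in $(x,t)$ on $[0,\infty)\times(0,\infty)$. Consequently $u\in \cC^0(D_T)\cap \cC^2((0,\infty)\times(0,T))$ with $u(x,0)=k^b_s(x,y)=f(x)$. Since $k^b_{t'}(\cdot,y)$ satisfies $(\pa_{t'}-L_b)k^b_{t'}(\cdot,y)=0$ for every $t'>0$, evaluating at $t'=t+s$ gives $(\pa_t-L_b)u=0$. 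Because $\pa_x u(x,t)$ remains bounded as $x\to 0^+$ and $b>0$, the zero-flux condition $\lim_{x\to 0^+} x^b \pa_x u(x,t)=0$ is automatic.

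Next I check the exponential-growth bound \eqref{eq:upperbound} for $u$, $\pa_t u$ and $\pa_x u$. Inserting the asymptotic expansion \eqref{psiexp1} into the closed form \eqref{eq:hkdb} (equivalently, using the large-argument asymptotics of $I_{b-1}$) yields
\[
k^b_{t+s}(x,y) \;=\; O\!\left( x^{(1-2b)/4}\, e^{-(\sqrt{x}-\sqrt{y})^2/(t+s)} \right) \quad\text{as } x\to\infty,
\]
uniformly for $t$ in any compact subset of $[0,\infty)$; termwise differentiation of \eqref{eq:hkdb} gives analogous super-exponential-in-$x$ decay for $\pa_x u$ and $\pa_t u$. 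Therefore for every $T>0$ and every $a>0$ there exists $M=M(s,y,T,a)$ with $|u|,|\pa_x u|,|\pa_t u|\le M e^{ax}$ on $D_T$.

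Since $a$ may be chosen arbitrarily small, for any given $t>0$ we can arrange $t<a^{-1}$, so Proposition~\ref{prop3} applies and gives the displayed identity. The only point that requires any care is matching the pointwise decay of $k^b_{t+s}$ against the exponential-growth hypothesis in Proposition~\ref{prop3}, but the closed-form expressions \eqref{eq:hkdb} and \eqref{psifrm1} make this routine; no deeper ingredient is needed.
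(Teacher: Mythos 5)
Your proposal is correct and follows essentially the same route as the paper: the paper also observes that for fixed $y$ the kernel is a smooth, exponentially decaying solution of $(\pa_t-L_b)u=0$, invokes the maximum-principle uniqueness, and then applies the representation formula of Proposition~\ref{prop3} to the time-shifted kernel to obtain the semi-group identity. Your write-up merely makes explicit the verification of the hypotheses (regularity, zero-flux condition, and the growth bound with $a$ chosen small so that $t<a^{-1}$), which the paper leaves implicit.
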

\begin{proof} For fixed $y,$ the kernel $k^b_t(\cdot,y)$ satisfies $\pa_tk^b_t(x,y)-L_bk^b_t(x,y)=0,$ and 
is $\CI$ in $[0,\infty)\times (0,\infty).$ This solution decays exponentially as $x\to\infty,$ so the maximum 
principle implies $s\mapsto k^b_{t+s}(x,y)$ is the unique smooth solution of this PDE with respect to the 
variables $(s,x),$ with initial data $k^b_t(\cdot,y).$ The proof now follows from Proposition \eqref{prop3}. 
\end{proof}
\begin{remark} A similar argument using the uniqueness of the solution to the Dirichlet problem when $b=0$ 
leads to a proof that 
\[
k^{0,D}_{t+s}(x,y)=\int\limits_{0}^{\infty}k^{0,D}_t(x,z)k^{0,D}_s(z,y)dz.
\]
It is also true that if $f\in\cC^{\infty}_c([0,\infty)),$ then
\[
\int\limits_{0}^{\infty}k^{0}_{t+s}(x,y)f(y)dy=
\int\limits_{0}^{\infty}k^{0}_t(x,z)\left[\int\limits_{0}^{\infty}k^{0}_s(z,y)f(y)dy\right]dz.
\]
\end{remark}

We conclude this section with:
\begin{proposition}\label{prop88}
If $f\in\cC^0([0,\infty))$ has an absolutely integrable Fourier transform, then 
\[
\lim_{b \to 0}\int_0^\infty k_t^b(x,y)f(y)\, dy  = \int_0^\infty k_t^0(x,y)f(y)\, dy.
\]
\end{proposition}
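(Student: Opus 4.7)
The plan is to use the Fourier representation \eqref{fourepb} and apply the Lebesgue dominated convergence theorem. For $b > 0$ that formula expresses
$$
\int_0^\infty k_t^b(x,y) f(y)\, dy = \frac{1}{2\pi}\int_{-\infty}^{\infty} \frac{e^{ix\xi/(1-it\xi)}}{(1-it\xi)^{b}}\, \hf(\xi)\, d\xi.
$$
A direct calculation gives the pointwise bound
$$
\left|\frac{e^{ix\xi/(1-it\xi)}}{(1-it\xi)^b}\right| = (1+t^2\xi^2)^{-b/2}\exp\!\left(-\frac{xt\xi^2}{1+t^2\xi^2}\right) \leq 1,
$$
valid for all $x \geq 0$, $t \geq 0$ and $b \geq 0$. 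Combined with the hypothesis $\hf \in L^1(\RR)$, this supplies an integrable majorant uniformly in $b \in [0,1]$. Since $(1-it\xi)^{-b} \to 1$ pointwise as $b \to 0^+$, dominated convergence yields
$$
\lim_{b \to 0^+}\int_0^\infty k_t^b(x,y) f(y)\, dy = \frac{1}{2\pi}\int_{-\infty}^\infty e^{ix\xi/(1-it\xi)}\hf(\xi)\, d\xi.
$$

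It remains to identify this limit with $\int_0^\infty k_t^0(x,y) f(y)\, dy$, i.e.\ to verify the $b = 0$ version of the Fourier representation. This is the main technical obstacle, because at $b = 0$ the integrand defining $k_t^0$ only converges oscillatorily, and the decomposition \eqref{k0decmp} shows that the difficulty is concentrated in a Dirac mass at $y = 0$. I would handle this by a uniqueness argument: let
$$
u(x,t) = \int_0^\infty k_t^0(x,y) f(y)\, dy, \qquad v(x,t) = \frac{1}{2\pi}\int_{-\infty}^{\infty} e^{ix\xi/(1-it\xi)}\hf(\xi)\, d\xi.
$$
Proposition \ref{prop3}, together with \eqref{k0decmp}, identifies $u$ as the unique tempered solution of $(\pa_t - L_0)u = 0$ with $u(\cdot,0) = f$ subject to the $b = 0$ boundary behavior. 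Differentiation under the integral sign, justified by the exponential decay in $\xi$ guaranteed by $xt\xi^2/(1+t^2\xi^2)$, together with the defining ODE $(\pa_t - L_0)E^0 = 0$, shows that $v$ satisfies the same PDE; Fourier inversion at $t = 0$ gives $v(x,0) = f(x)$, and evaluating at $x=0$ yields $v(0,t) = (2\pi)^{-1}\int \hf(\xi)\,d\xi = f(0) = u(0,t)$. The uniform bound $|v(x,t)| \leq (2\pi)^{-1}\|\hf\|_{L^1}$ places $v$ in the hypotheses of Proposition \ref{maxprinciple}, so $u \equiv v$ and the proposition follows.
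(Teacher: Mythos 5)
Your opening step is exactly the paper's (very terse) proof: the paper simply declares the convergence obvious from the Fourier representations \eqref{eq:hk0} and \eqref{eq:hkfb}, i.e.\ from \eqref{fourepb} together with the bound $|(1-it\xi)^{-b}e^{ix\xi/(1-it\xi)}|\leq 1$ and dominated convergence against $|\hf|\in L^1(\bbR)$, which is what you carry out. Where you genuinely differ is the last step: the paper regards the $b=0$ analogue of \eqref{fourepb} as already available, since \eqref{eq:hk0} is how $k^0_t$ was constructed (alternatively it follows from \eqref{k0decmp}, the explicit formula for $k^{0,D}_t$, and the regularized integral \eqref{eqn183.1} plus Fubini), whereas you re-derive it through a PDE uniqueness argument; that buys independence from the oscillatory-integral bookkeeping at the price of verifying the hypotheses of Proposition~\ref{maxprinciple} for both $u$ and $v$. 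Two points there need tightening. First, your justification for differentiating under the integral, ``exponential decay in $\xi$,'' is false: $\exp(-xt\xi^2/(1+t^2\xi^2))\to e^{-x/t}\neq 0$ as $|\xi|\to\infty$. The correct reason is that each derivative brings down a factor such as $i\xi/(1-it\xi)$ or $x\xi^2/(1-it\xi)^2$, bounded by $1/t$ resp.\ $x/t^2$ uniformly in $\xi$, so $C(x,t)|\hf(\xi)|$ dominates for $t>0$; the same bounds supply the $\cC^1$-regularity of $v$ up to $x=0$ and $\lim_{x\to 0^+}x\pa_x^2v=0$, which \eqref{maxprnreg}--\eqref{2ndderest} require and which you do not check. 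Second, Proposition~\ref{prop3} is a representation formula for solutions already known to be regular; it does not itself give the facts you need about $u=\int_0^\infty k^0_t(\cdot,y)f(y)\,dy$ (continuity on $D_T$ with $u(\cdot,0)=f$, smoothness and \eqref{2ndderest} for $t>0$). Those come from the explicit kernel and the mapping results of \S~\ref{s.mapping}, stated there for compactly supported data but extending to your $f$ (bounded, continuous, tending to zero since $\hf\in L^1$) because $k^0_t$ is a positive kernel of total mass one. With those repairs your argument closes (matching boundary values at $x=0$ is not even needed for the maximum principle), and splitting the $\xi$-integral at $|\xi|=R$ also yields the uniform convergence on $[0,\infty)\times[0,T]$ that the paper asserts after the proposition.
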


This convergence  is obvious  using the Fourier  representations (\ref{eq:hk0})
and (\ref{eq:hkfb})  of the kernels $k_t^b$  and $k_t^0.$ If $\hf$ is
absolutely integrable, then, for any fixed $0<T$ the convergence is uniform on
$[0,\infty)\times [0,T].$

\section{Mapping properties for the model operators}\label{s.mapping}
We can now prove the key fact that the diffusions associated to the
operators $L_b$ are semi-groups, which, for every $m \in {\mathbb N}\cup\{0\},$
preserve the spaces $\dot{\calC}^m([0,\infty)),$ see Remark~\ref{cmdot}.  We
prove this in two steps. The first is to show that the kernels map polynomials
to polynomials; then, decomposing an arbitrary $\calC^m$ initial condition into
a polynomial (its Taylor series) and a remainder term that vanishes to order
$m$, the fact that the solution with initial condition given by this remainder
term is also $\calC^m$ follows from the maximum principle.

Suppose that the initial data is a polynomial, $ f(x)=\sum_{j=0}^na_jx^j$. It
is clear that the formal exponential
\[
u = e^{ t L_b} f(x) = \sum_{\ell = 0}^\infty \frac{t^\ell}{\ell !} L_b^\ell f(x)
\]
makes sense and in fact is a polynomial which is the unique moderate growth
solution of (\ref{ivpb1}). We simply observe that $L_b^\ell x^j$ is a constant
multiple of $x^{j-\ell}$, hence this vanishes as soon as $\ell > j$. Thus the
sum on the right above is finite and is a polynomial in $(x,t)$.  It solves the
equation by the usual elementary calculation. Therefore, by the uniqueness
theorem, this is the only (exponentially bounded) solution of this problem, and
hence, by the representation formula, we must also have that
\[
u(x,t)=\int\limits_{0}^{\infty}k^b_t(x,y)f(y)\, dy.
\]

We now turn to the case of general $\cC^m$ data.
\begin{proposition}\label{prop99} Suppose that $f \in \calC^m([0,\infty))$ has compact support. Then
\[
u(x,t) = \int_0^\infty k_t^b(x,y) f(y)\, dy \in \calC^0([0,\infty)_t;\cC^m([0,\infty)_x)),
\]
solves the initial value problem~\eqref{eq:mhe}. The norm of the difference,
$\|u(\cdot,t)-f\|_{\cC^m}$ tends to zero as $t\to 0,$ and therefore $k_t^b$
extends to define a $\cdC^m$-semi-group for each $m = 0, 1, 2, \ldots$.
\label{pr:cmsemi}
\end{proposition}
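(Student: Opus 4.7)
The plan, foreshadowed in the text, is to split $f=P+g$, where $P$ is the degree-$m$ Taylor polynomial of $f$ at $0$ and $g=f-P$, so $\pa_y^j g(0)=0$ for $0\le j\le m$ and $g$ has polynomial growth (equal to $-P$ outside $\supp f$). The polynomial piece is handled by the explicit polynomial semi-group action established just before the proposition, and the remainder by Lemma~\ref{difnlem}. The target is the clean derivative identity
\[
\pa_x^j u(x,t) \;=\; \int_0^\infty k^{b+j}_t(x,y)\,\pa_y^j f(y)\, dy,\qquad 0\le j\le m,
\]
which reduces the $\cC^m$ statement to $\cC^0$-convergence for the shifted kernels $k^{b+j}_t$.

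To establish this identity, observe that the Gaussian-type decay $k^b_t(x,y)\lesssim e^{-(\sqrt{x}-\sqrt{y})^2/t}$ read off from~\eqref{eq:hkdb} renders both $\int k^b_t P\, dy$ and $\int k^b_t g\, dy$ absolutely convergent. The first equals $v(x,t):=(e^{tL_b}P)(x)$ by the polynomial discussion; the elementary intertwining $\pa_x L_b = L_{b+1}\pa_x$ gives, inductively, $\pa_x^j v = e^{tL_{b+j}}\pa_y^j P = \int k^{b+j}_t \pa_y^j P\, dy$. For $w:=\int k^b_t g\, dy$, apply Lemma~\ref{difnlem} to the cutoffs $g_n:=\chi_n g\in\cC^m_c$ with $\chi_n\in\cC^\infty_c$ equal to $1$ on $[0,n]$; since $g_n=g$ near $0$, the hypothesis $\pa_y^j g_n(0)=0$ holds for $0\le j\le m$. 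Passing $n\to\infty$ by dominated convergence --- the exponential decay of $k^{b+j}_t$ in $y$ absorbs the contributions from derivatives of $\chi_n$, which are supported in $[n,2n]$ --- yields $\pa_x^j w=\int k^{b+j}_t \pa_y^j g\, dy$, and summing with $\pa_x^j v$ produces the identity.

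It remains to handle the base case: for $h\in\cC^0_c$, the function $U(x,t):=\int k^{b+j}_t(x,y)h(y)\, dy$ must satisfy $\|U(\cdot,t)-h\|_\infty\to 0$ as $t\to 0^+$. Positivity of $k^{b+j}_t$ is evident from~\eqref{eq:hkdb}, and applying the polynomial calculus to $P\equiv 1$ gives $\int k^{b+j}_t(x,y)\, dy=1$, so $\|U(\cdot,t)\|_\infty\le \|h\|_\infty$. Applying $e^{tL_{b+j}}$ to the polynomial $(y-x)^2$ (in $y$, with $x$ a parameter) and evaluating at $y=x$ produces the moment bound
\[
\int_0^\infty k^{b+j}_t(x,y)(y-x)^2\, dy \;=\; 2tx + t^2(b+j)(b+j+1),
\]
so via Chebyshev's inequality the kernel $k^{b+j}_t(x,\cdot)$ concentrates at $y=x$ locally uniformly in $x$, while for $x$ far from $\supp h$ the Gaussian bound on the kernel shows $|U(x,t)|\to 0$ uniformly in $t\in(0,T]$. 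These two estimates combine to give uniform convergence on all of $[0,\infty)$. The equation $\pa_t u=L_b u$ then follows by differentiating under the integral, and the semi-group property extends from $\cC^m_c$ to $\cdC^m([0,\infty))$ by density using the $\cC^m$-contraction supplied above. The most delicate step is precisely this uniform convergence on the unbounded domain, which requires combining the local (concentration) and global (kernel decay) estimates uniformly as $t\to 0^+$.
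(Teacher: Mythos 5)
Your proof is correct, but it takes a genuinely different route from the paper's at two key points, and the comparison is worth recording. First, the decomposition: you split $f=P+g$ with the \emph{uncut} Taylor polynomial, so your remainder $g$ has polynomial growth, and you extend Lemma~\ref{difnlem} to it by truncating with $\chi_n$ and passing to the limit using the decay of $k^{b+j}_t$ in $y$ (cf.~\eqref{kbnlrgy}); the paper instead cuts the polynomial off ($\chi q$), so its remainder $\tilde f$ is compactly supported and Lemma~\ref{difnlem} applies verbatim, at the cost of having to control the tail $(1-\chi)q$ via Lemma~\ref{lem1}. Both work; yours needs the (easy) limiting step, the paper's needs the extra tail term. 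Second, and more substantively, you prove the derivative-transfer identity $\pa_x^ju=\int k^{b+j}_t\pa_y^jf\,dy$ for \emph{all} $f\in\cC^m_c$ directly, via the intertwining $\pa_xL_b=L_{b+1}\pa_x$ on the explicit polynomial solutions — this is essentially Corollary~\ref{cor33}, which the paper only derives afterwards from the maximum principle; and your $\cC^0$ base case is an elementary approximate-identity argument (positivity, total mass $1$, and the exact second moment $2tx+t^2(b+j)(b+j+1)$ with Chebyshev), whereas the paper splits its order-$m$-vanishing remainder near and away from $x=0$ and leans on the $\psi_b$ asymptotics for the away-from-zero piece. Your moment/Chebyshev mechanism is more self-contained near the diagonal and gives uniform convergence for arbitrary $\cC^0_c$ data in one stroke; the paper's version stays entirely within lemmas already proved and yields smooth locally uniform convergence for the polynomial piece, which it reuses later. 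One small imprecision: in the far-field step what you actually need is that $\sup\{|U(x,t)|:\dist(x,\supp h)\ge\alpha\}\to 0$ as $t\to 0^+$ (uniformly in such $x$), not ``$|U(x,t)|\to 0$ uniformly in $t\in(0,T]$''; the required bound is exactly the first estimate of Lemma~\ref{lem1} (or the Gaussian-type bound you read off from~\eqref{eq:hkdb}, where the algebraic prefactors are harmless against $e^{-c/t}$), so the gap is purely one of phrasing.
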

\begin{remark}\label{cmdot} We let $\cdC^m([0,\infty))$ denote the closed subspace of
  $\cC^m([0,\infty))$ consisting of functions $f$ with 
  \[
    \lim_{x\to\infty} \pa_x^jf(x)=0\text{ for }0\leq j\leq m.
  \]
This is the closure of $\cC^m_c([0,\infty))$ in the $\cC^m$-norm.
\end{remark}

To prove the proposition, the following two lemmas are useful
\begin{lemma}\label{lem1} For $b\geq 0$ and $0 < \ell <L$, there is a constant $C_b$ so that if
$f\in\cC_c^0([0,L)),$  then, for $x>L,$ and $t<1,$ we have the estimate
\[
\left|\int\limits_{0}^{\infty} k^b_t(x,y)f(y)dy\right|\leq  C_b\|f\|_{\infty}\sqrt{L}\left(\frac{L}{x}
\right)^{\frac b2-\frac 14}\frac{e^{-\frac{(\sqrt{x}-\sqrt{L})^2}{t}}}{\sqrt{t}}.
\]
If $f\in\cC^0(l,\infty),$ is of tempered growth, then for $x<l,$ and $t<1,$
\[
|u(x,t)|\leq C_{b,N}\|f\|_{(N)}\frac{e^{-\frac{(\sqrt{l}-\sqrt{x})^2}{t}}}{t^b}.
\]
Here $N\in\bbN$ is chosen so that
\[
\|f\|_{(N)} :=\sup_{0<x}(1+|x|)^{-N}|f(x)|<\infty.
\]
\end{lemma}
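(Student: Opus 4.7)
My plan is to work directly with the explicit representation~\eqref{psifrm1},
$$k_t^b(x,y)\,dy \;=\; \left(\frac{y}{t}\right)^{\!b} e^{-(x+y)/t}\,\psi_b\!\left(\frac{xy}{t^2}\right)\frac{dy}{y},$$
combined with the asymptotic expansion~\eqref{psiexp1}. The key algebraic observation is the identity $e^{-(x+y)/t+2\sqrt{xy}/t}=e^{-(\sqrt{x}-\sqrt{y})^2/t}$, so that once the leading term $\psi_b(z)\sim z^{1/4-b/2}e^{2\sqrt{z}}/\sqrt{4\pi}$ is substituted, the desired Gaussian factor appears automatically. The first preliminary step is to promote~\eqref{psiexp1} to the global pointwise bound $\psi_b(z)\le C_b\,(1+z)^{1/4-b/2}\,e^{2\sqrt{z}}$ valid for all $z\ge 0$, which is immediate from the smoothness and strict positivity of $\psi_b$ at the origin together with the large-$z$ asymptotics.

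The rescaling $s=\sqrt{y/t}$, $\alpha=\sqrt{x/t}$ then converts the integral to
$$\int_0^\infty k_t^b(x,y)\,f(y)\,dy \;=\; 2\int_0^\infty s^{2b-1}\,e^{-(\alpha-s)^2}\bigl[\psi_b(\alpha^2s^2)\,e^{-2\alpha s}\bigr]\,f(ts^2)\,ds,$$
where the bracketed factor is at most $C_b\bigl(1+(\alpha s)^{1/2-b}\bigr)$ by the global bound. For Part 1, the support of $f$ restricts $s$ to $[0,\beta]$ with $\beta=\sqrt{L/t}$, and the hypothesis $x>L$ forces $\alpha>\beta$, so $(\alpha-s)^2\ge(\alpha-\beta)^2=(\sqrt{x}-\sqrt{L})^2/t$ uniformly. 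Pulling this factor out of the integrand, the residual polynomial integral splits into $\int_0^\beta s^{2b-1}\,ds=\beta^{2b}/(2b)$ plus $\alpha^{1/2-b}\int_0^\beta s^{b-1/2}\,ds=\alpha^{1/2-b}\beta^{b+1/2}/(b+1/2)$. Translated back via $\alpha=\sqrt{x/t}$, $\beta=\sqrt{L/t}$, the second (dominant) term equals exactly $x^{1/4-b/2}L^{b/2+1/4}\,t^{-1/2}=\sqrt{L}\,(L/x)^{b/2-1/4}/\sqrt{t}$, which is the stated bound.

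For Part 2, the same substitution puts $s$ on $[\beta',\infty)$ with $\beta'=\sqrt{l/t}>\alpha$, so $(\alpha-s)^2=\bigl((\beta'-\alpha)+(s-\beta')\bigr)^2\ge(\sqrt{l}-\sqrt{x})^2/t+(s-\beta')^2$. After factoring out $e^{-(\sqrt{l}-\sqrt{x})^2/t}$, the residual integrand is Gaussian in $s-\beta'$ with only polynomial growth from $|f(ts^2)|\le\|f\|_{(N)}(1+ts^2)^N$, so the remaining integral converges uniformly. The $t^{-b}$ prefactor in the final bound comes from the small-$z$ behavior $\psi_b(z)\sim 1/\Gamma(b)$ combined with the $(y/t)^b$ factor after reverting to the $y$-variable. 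The main technical obstacle I anticipate is the bookkeeping across the two asymptotic regimes of $\psi_b$: the large-$z$ regime supplies the $1/\sqrt{t}$ needed for Part 1, whereas the small-$z$ regime supplies the $1/t^b$ needed for Part 2, and one must track polynomial corrections carefully near the crossover $\alpha s\sim 1$ — and verify that the first term $\beta^{2b}/(2b)$ in Part 1 is indeed dominated by the second in the regime of interest — so that all constants depend only on $b$ and $N$.
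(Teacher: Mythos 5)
Your overall strategy --- bounding the integral via \eqref{psifrm1} and the $\psi_b$ asymptotics after the substitution $s=\sqrt{y/t}$, $\alpha=\sqrt{x/t}$ --- is exactly what the paper intends (it says the proofs "are elementary using the asymptotic expansion for $\psi_b$" and leaves them to the reader), and your outline of the second estimate is essentially sound. But in the first estimate there is a step that fails rather than merely awaiting verification. After pulling out $e^{-(\alpha-\beta)^2}$ you bound the bracketed factor by $C_b(1+(\alpha s)^{1/2-b})$ and split the remaining integral into $\beta^{2b}/(2b)$ plus $\alpha^{1/2-b}\beta^{b+1/2}/(b+\frac12)$, asserting the second term dominates. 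The ratio of the first to the second is $(\alpha\beta)^{b-1/2}$ with $\alpha\beta=\sqrt{xL}/t$, so for $b>\frac12$ the first term exceeds the target $\alpha^{1/2-b}\beta^{b+1/2}$ by an unbounded factor precisely as $t\to 0^+$ --- the regime in which the lemma is actually used (and it is later applied with $b$ replaced by $b+j$, so $b>\frac12$ cannot be set aside). The loss comes from replacing $(1+\alpha^2s^2)^{1/4-b/2}$, which for $b>\frac12$ satisfies $(1+\alpha^2s^2)^{1/4-b/2}\le\min\{1,(\alpha s)^{1/2-b}\}$, by the sum. The repair is to split the $s$-integral at $\alpha s\sim 1$: on the inner piece keep the much stronger exponential $e^{-(\alpha-s)^2}$ (or simply note it contributes $O(\alpha^{-2b})$ and that $\alpha^{-2b}\le L^{-(b+1/2)}\alpha^{1/2-b}\beta^{b+1/2}$ because $\alpha\beta\ge L$), and on the outer piece use the decaying factor $(\alpha s)^{1/2-b}$. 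With that modification the argument closes.

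Two smaller points. First, the constants cannot depend "only on $b$ and $N$": for $0\le b<\frac12$ the stated bound with an $L$-independent constant is false (let $t\to 1^-$, $x\downarrow L$, $L\to 0$; the left side is of order $L^{b}$ while the right side is of order $\sqrt{L}$), so you must exploit that $L$ and $l$ are fixed before the constant is chosen --- e.g.\ via $\alpha\beta\ge L$ as above --- which the lemma's quantifier order permits. Second, the case $b=0$ needs separate words: by \eqref{k0decmp} the kernel $k^0_t$ contains the atom $e^{-x/t}\delta(y)$, which contributes $e^{-x/t}f(0)$ in Part 1 (easily absorbed, since $e^{-x/t}\le e^{-(\sqrt x-\sqrt L)^2/t}$) and nothing in Part 2; and for the continuous part your global bound makes $\int_0^\beta s^{2b-1}\,ds$ divergent at $b=0$, so you should use $\psi_0(z)=z\psi_2(z)$ (equivalently the $\psi_2$ form of $k^{0,D}_t$) to get the needed vanishing of the bracketed factor at $s=0$.
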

The proofs are elementary using the asymptotic expansion for $\psi_b$ and are
left to the reader. It is also useful to have the following
\begin{lemma}
If  $b\geq 0,$ and $f\in\cC^m_c((0,\infty)),$ then
\[
u(x,t)=\int\limits_{0}^{\infty}k^b_t(x,y)f(y)\, dy,
\]
solves~\eqref{eq:mhe} and 
\[
\lim_{t\to 0^+}\|u(\cdot,t)-f\|_{\cC^m}=0.
\]
\end{lemma}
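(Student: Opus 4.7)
The plan is to reduce the $\cC^m$-convergence to a $\cC^0$-convergence for each derivative using the differentiation identity of Lemma~\ref{difnlem}, and then to establish the approximation-of-identity statement for $k^{b'}_t$ via the probability-kernel property and the tail decay of Lemma~\ref{lem1}.

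Since $f\in\cC^m_c((0,\infty))$, every derivative $\pa_x^j f$, $0\leq j\leq m$, vanishes in a neighbourhood of $x=0$, so the hypothesis of Lemma~\ref{difnlem} is satisfied. That lemma yields
\[
\pa_x^j u(x,t)=\int_0^{\infty} k^{b+j}_t(x,y)\,\pa_y^j f(y)\,dy,\qquad 0\leq j\leq m.
\]
It therefore suffices to show that, for every $b'\geq 0$ and every $g\in\cC^0_c((0,\infty))$,
\[
\int_0^{\infty} k^{b'}_t(x,y)\,g(y)\,dy\longrightarrow g(x)\quad\text{uniformly in } x\in[0,\infty)\text{ as }t\to 0^+,
\]
and then to apply this with $(b',g)=(b+j,\pa_y^j f)$ for each $j=0,\ldots,m$.

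To establish the approximation of identity, I would first observe that $u\equiv 1$ satisfies the hypotheses of Proposition~\ref{prop3} for $L_{b'}$ (with any $a>0$), so $\int_0^{\infty} k^{b'}_t(x,y)\,dy=1$ for small $t>0$; the semi-group property then extends this to all $t>0$. Consequently
\[
\int_0^{\infty} k^{b'}_t(x,y)g(y)\,dy-g(x)=\int_0^{\infty} k^{b'}_t(x,y)\bigl(g(y)-g(x)\bigr)\,dy.
\]
Writing $\supp g\subset[\ell,L]\subset(0,\infty)$, I split $x\in[0,\infty)$ into three regions. On the outer regions $x\in[0,\ell/2]$ and $x\in[2L,\infty)$ we have $g(x)=0$, and the two estimates of Lemma~\ref{lem1} directly give $|u(x,t)|\leq Ce^{-c/t}$ for constants $c,C>0$ depending only on $\ell,L$ and $b'$. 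On the middle region $x\in[\ell/2,2L]$, I use uniform continuity of $g$: given $\epsilon>0$, choose $\delta>0$ so that $|g(y)-g(x)|<\epsilon$ whenever $|y-x|<\delta$, and split the integral at $|y-x|=\delta$. The near-diagonal piece is bounded by $\epsilon$, while the far piece is bounded by $2\|g\|_\infty\int_{|y-x|\geq\delta} k^{b'}_t(x,y)\,dy$, which tends to $0$ uniformly on $[\ell/2,2L]$ thanks to the Gaussian-type decay $e^{-(\sqrt y-\sqrt x)^2/t}$ supplied by~\eqref{psiexp1} (or, equivalently, by applying Lemma~\ref{lem1} to the characteristic functions of $\{y\leq x-\delta\}$ and $\{y\geq x+\delta\}$, where the relevant constants can be chosen uniformly in $x\in[\ell/2,2L]$).

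Finally, to verify that $u$ solves~\eqref{eq:mhe}, I differentiate under the integral sign: for $t>0$ the kernel $k^b_t(x,y)$ is smooth in $(x,t)$ and satisfies $(\pa_t-L_b^x)k^b_t=0$ by the construction of \S\ref{s.modfs}, while the compact support of $f$ and the rapid decay of the kernel and its $x$-derivatives trivially justify the exchange of derivative and integral. The main obstacle is the uniform convergence of the derivatives on the unbounded half-line, where the kernel degenerates at $x=0$ and one must simultaneously control the behaviour at the other end; the three-region decomposition anchored in Lemma~\ref{lem1} is exactly what handles both of these at once.
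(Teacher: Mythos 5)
Your proposal is correct and follows essentially the same route as the paper's proof: reduce to the $\cC^0$ case via Lemma~\ref{difnlem} (using that $f$ and its derivatives vanish near $x=0$), control the region off the support of $f$ with the tail estimates of Lemma~\ref{lem1}, and obtain uniform convergence near the support from the asymptotics of $\psi_b$. Your middle-region argument is merely a more explicit approximation-of-identity version (normalization $\int k^{b'}_t(x,y)\,dy=1$ plus uniform continuity) of the step the paper dispatches by citing the $\psi_b$ expansion directly, so there is nothing further to add.
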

\begin{proof} It is clear that $\pa_tu-L_bu=0$ in $[0,\infty)\times(0,\infty).$
  Suppose that $\supp f\subset [l,L].$ The previous lemma shows that for any
  $\eta>0,$ the functions $u(\cdot,t)(1-\chi_{[l-\eta,L+\eta]}(\cdot))$
  converge uniformly to zero as $t\to 0^+.$ If $\eta<l/2,$ then $xy>l^2/2,$ for
  $(x,y)\in [l-\eta,L+\eta]\times\supp f,$ and we can therefore use the
  asymptotic expansion for $\psi_b$ to conclude that $u(\cdot,
  t)\chi_{[l-\eta,L+\eta]}(\cdot)$ converges uniformly to $f.$ As
  $f\in\cC^m_c((0,\infty)),$ we know from Lemma~\ref{difnlem} that
  \[
    \pa_x^ju(x,t)=\int\limits_0^{\infty}k^{b+j}_t(x,y)\pa_y^jf(y)dy,\text{ for
    }j\leq m.
  \]
The argument above applies to show the uniform convergence of the derivatives
$\pa_x^ju(\cdot,t)$ to $\pa_x^j f,$ for $1\leq j\leq m.$
\end{proof}

\begin{proof}[Proof of Proposition~\ref{prop99}]
  Fix $m$ and $f \in \calC^m_c[0,\infty).$ From the expressions~\eqref{psiexp1}
  and~\eqref{k0decmp} for $k_t^b$ it is clear that, if $f\in\cC^0,$ then
  $u\in\CI([0,\infty)\times (0,\infty)),$ and $\pa_tu-L_bu=0$ in
  $[0,\infty)\times (0,\infty).$ Choose a smooth cutoff function $\chi(x)$
  which equals $1$ near $x=0$, $|\chi(x)| \leq 1$, and which vanishes for $x
  \geq 1.$ Let
\[
q(x) = \sum_{j=0}^m \frac{f^{(j)}(0)}{j!}x^j
\]
be the Taylor polynomial of order $m$ for $f$; thus 
\[
\tilde{f}(x) = f(x) - \chi(x)q(x) = o(x^m), \qquad \mbox{as}\ x \searrow 0.
\]
We observe that
\[
\begin{split}
u_{\chi q}(x,t)&=\int_0^\infty k_t^b(x,y) \chi(y)q(y)\, dy \\
&= 
\int_0^\infty k_t^b(x,y) q(y)\,dy - \int_0^\infty k_t^b(x,y)(1-\chi(y))q(y)\, dy.
\end{split}
\]
By the remarks above, the first term on the right is a polynomial, and therefore
in $\calC^{\infty}([0,\infty)\times [0,\infty)).$ On the other hand,
$(1-\chi(y))q(y)$ is supported away from $y=0$, and it is elementary from the
regularity properties of $k_t^b$ proved in the last section and
Lemma~\ref{lem1} that
\[
\int_0^\infty k_t^b(x,y) (1-\chi(y))q(y)\, dy\in \calC^{\infty}([0,\infty)_x \times [0,\infty)_t),
\]
and tends locally uniformly to $(1-\chi(x))q(x)$ as $t\to 0^+.$ Together with
Lemma~\ref{lem1}, this shows that $u_{\chi q}(x,t)$ tends to $\chi q$ in
$\cC^l([0,\infty))$ for all $l\in\bbN.$ Thus it remains to prove that 
\[
\tilde{u}(x,t) = \int_0^\infty k_t^b(x,y)\tilde{f}(y)\, dy
\]
is also in $\calC^m$ and tends to $\tilde{f}$ in the $\cC^m$-norm as $t\to
0^+.$ 

First consider the case $m=0$. For every $\e > 0$ we can choose $\delta >
0$ so that $\tilde{f}(x) < \e$ when $x < \delta$. Decompose $\tilde{f}(x) =
\chi(x/\delta)\tilde{f}(x) + (1-\chi(x/\delta)) \tilde{f}(x)$. Applying $k_t^b$
to the second term gives a smooth function $v(x,t)$ in
$\CI([0,\infty)\times(0,\infty)),$ which satisfies $\pa_t v-L_bv=0.$
Lemma~\ref{lem2} shows that $v\in\cC^0([0,\infty)\times[0,\infty)),$ and tends
uniformly to $(1-\chi(x/\delta)) \tilde{f}$ as $t\to 0^+.$ As $k^b_t$ is
pointwise positive, and has integral $1$ for all $t>0,$ we see that
\[
\left|\int_0^\infty k_t^b(x,y)\chi(y/\delta)\tilde{f}(y)\, dy \right| \leq \e.
\]
Since $\e$ is arbitrary, this shows that 
\[
  \limsup_{t\to 0^+}\|\tu(\cdot,t)-f\|_{\cC^0}=1.
\]

If $m > 0$, then $\pa_x^j\tf(0)=0,$ for $1\leq j\leq m,$ and we can apply
Lemma~\ref{difnlem} to conclude that
\[
  \pa_x^j\tu(x,t)=\int\limits_0^{\infty}k^{b+j}_{t}(x,y)\pa_y^j\tf(y)dy,\text{
    for }1\leq j\leq m.
\]
The $\cC^0$-argument then applies to complete the proof.
\end{proof}
As a corollary of these results we can extend Lemma~\ref{difnlem} to general
data in $\cC^m_c([0,\infty)).$
\begin{corollary}\label{cor33} If $b\geq 0,$ and $f\in\cC^m_c([0,\infty)),$ and
  \[
    u(x,t)=\int\limits_0^{\infty}k^b_t(x,y)f(y)dy
  \]
 then
  \begin{equation}\label{diffjker}
    \pa_x^ju(x,t)=\int\limits_0^{\infty}k^{b+j}_t(x,y)\pa_y^jf(y),
\text{ for }1\leq j\leq m.
  \end{equation}
\end{corollary}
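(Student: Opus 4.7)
The plan is to apply Proposition~\ref{prop3}, with $b$ replaced by $b+j$, to the function $v_j := \pa_x^j u$. The key algebraic fact is the intertwining identity
\[
\pa_x L_b = L_{b+1}\pa_x,
\]
which follows by the direct computation $\pa_x(x\pa_x^2+b\pa_x) = x\pa_x^3+(b+1)\pa_x^2 = (x\pa_x^2+(b+1)\pa_x)\pa_x$. Iterating gives $\pa_x^j(\pa_t-L_b)=(\pa_t-L_{b+j})\pa_x^j$ as an identity of differential operators on any sufficiently smooth function. Once the hypotheses of Proposition~\ref{prop3} are verified for $v_j$, with operator $L_{b+j}$ and initial datum $\pa_y^j f$, its uniqueness assertion yields exactly~\eqref{diffjker}.

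The verification proceeds in four steps. First, Proposition~\ref{prop99} asserts that $u\in\cC^0([0,\infty)_t;\cC^m([0,\infty)_x))$ and that $u(\cdot,t)\to f$ in the $\cC^m$-norm as $t\to 0^+$; a standard two-variable continuity argument then shows that $v_j$ extends continuously to all of $D_T$ with $v_j(\cdot,0)=\pa_y^j f$. Second, the explicit representation~\eqref{psifrm1} together with the entirety of $\psi_b$ shows that for $t>0$ the kernel $k_t^b(x,y)$ is smooth in $x$ up to $x=0$ and smooth in $t$; since $f$ is compactly supported, differentiation under the integral is justified, so $u\in\CI([0,\infty)_x\times(0,\infty)_t)$, and hence $(\pa_t-L_{b+j})v_j = \pa_x^j(\pa_t-L_b)u = 0$ on $(0,\infty)\times(0,T)$. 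Third, because $u(\cdot,t)$ is smooth up to $x=0$ for each fixed $t>0$, $\pa_x v_j = \pa_x^{j+1}u$ is bounded near $x=0$; since $b+j\geq 1$, the zero-flux condition $\lim_{x\to 0^+} x^{b+j}\pa_x v_j(x,t)=0$ is automatic. Fourth, since $f$ (hence $\pa_y^j f$) is compactly supported, the kernel estimates of Lemma~\ref{lem1} together with the PDE itself give uniform bounds on $v_j$, $\pa_t v_j$, and $\pa_x v_j$ on $D_T$, so the exponential-growth hypothesis of Proposition~\ref{prop3} is trivially satisfied.

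With all hypotheses verified, Proposition~\ref{prop3} applied to $v_j$ (with parameter $b+j$ and initial data $\pa_y^j f$) gives
\[
v_j(x,t)=\int_0^\infty k^{b+j}_t(x,y)\,\pa_y^j f(y)\,dy,
\]
which is precisely~\eqref{diffjker}.

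The main subtlety lies in ensuring regularity of $v_j$ up to the corner $(x,t)=(0,0)$: we need not only that $u(\cdot,t)$ is $\cC^m$ in $x$ for each fixed $t$, but that the map $t\mapsto u(\cdot,t)$ is continuous into $\cC^m([0,\infty)_x)$, which is precisely the stronger conclusion that Proposition~\ref{prop99} supplies. The boundary-condition verification also relies essentially on the smoothness of $k_t^b(x,y)$ in $x$ up to $x=0$ for $t>0$; everything else is a matter of assembling the intertwining identity and the uniqueness result already at our disposal.
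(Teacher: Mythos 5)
Your overall strategy is the same as the paper's up to the last step: the intertwining identity $\pa_x L_b = L_{b+1}\pa_x$ and the regularity supplied by Proposition~\ref{prop99} are exactly the paper's ingredients. The difference is how you identify $v_j=\pa_x^j u$ with the kernel integral: you invoke the representation formula, Proposition~\ref{prop3}, whereas the paper compares $\pa_x^ju$ with $u_j(x,t)=\int_0^\infty k^{b+j}_t(x,y)\pa_y^jf(y)\,dy$ and applies the uniqueness corollary to Proposition~\ref{maxprinciple}. This difference matters, because Proposition~\ref{prop3} demands the growth bound \eqref{eq:upperbound} not only for the solution but also for its derivatives $\pa_t v_j$ and $\pa_x v_j$, \emph{uniformly on all of} $D_T$, i.e.\ down to $t=0$.

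That is where your step four has a genuine gap. For the top-order cases (which are the only essential ones, since the case $j<m$ is just the statement with $m$ replaced by $j$), the claimed ``uniform bounds on $v_j$, $\pa_t v_j$, and $\pa_x v_j$ on $D_T$'' are false in general: when $j=m$, $\pa_x v_m=\pa_x^{m+1}u$ and $\pa_t v_m=x\pa_x^{m+2}u+(b+m)\pa_x^{m+1}u$ involve more derivatives than $f$ possesses, and for $f$ merely in $\cC^m_c$ these quantities typically blow up as $t\to 0^+$ (Lemma~\ref{lem2} already indicates the rate $C/(t+\sqrt{xt})$; take $m=0$ and $f$ behaving like $\sqrt{|y-y_0|}$ near some $y_0$ to see that no bound uniform in $t$ can hold). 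Proposition~\ref{prop99} controls only $\pa_x^k u$ for $k\le m$, so Lemma~\ref{lem1} plus ``the PDE itself'' cannot produce the missing bounds. Two repairs are available: (i) follow the paper and apply the uniqueness corollary to the difference $\pa_x^ju-u_j$, noting that its hypotheses require the exponential bound only on the solution itself (both $\pa_x^ju$, by Proposition~\ref{prop99}, and $u_j$, by positivity and unit mass of $k^{b+j}_t$, are bounded on $D_T$), together with the regularity \eqref{maxprnreg} and \eqref{2ndderest}, which hold for $t>0$ by smoothness of the kernel; or (ii) keep your route but apply Proposition~\ref{prop3} on $[\epsilon,T]$ to the time-shifted function $v_j(\cdot,\cdot+\epsilon)$, whose initial data $\pa_x^ju(\cdot,\epsilon)$ is smooth, obtaining $v_j(x,t)=\int_0^\infty k^{b+j}_{t-\epsilon}(x,y)\pa_y^ju(y,\epsilon)\,dy$, and then let $\epsilon\to 0$ using the $\cC^m$-convergence $u(\cdot,\epsilon)\to f$ from Proposition~\ref{prop99} and the fact that $k^{b+j}_s$ is positive with integral one. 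Your first three steps (continuity of $v_j$ up to $t=0$, the intertwined equation, and the automatic zero-flux condition since $b+j\ge 1$) are fine.
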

\begin{proof} The Proposition implies that $u$ is a solution to
  $\pa_tu-L_bu=0,$ which belongs to $\cC^m([0,\infty)\times[0,\infty)),$ and
  satisfies
  \[
    \lim_{t\to 0^+}u(\cdot,t)=f,
  \]
with convergence in the $\cC^m$-norm. One can therefore differentiate the
equation satisfied by $u$ to conclude that
\[
  \pa_t\pa_x^ju-L_{b+j}u=0\text{ for }1\leq j\leq m,\text{ and }
\lim_{t\to 0^+}\pa_x^ju=\pa_x^jf.
\]
For each $j,$ the right hand side
of~\eqref{diffjker} is another solution, $u_j$ to this initial value problem, that
also satisfies the hypotheses of the maximum principle. Thus $u_j=\pa_x^ju$ for
$1\leq j\leq m.$
\end{proof}
We can use this result to study the regularity properties of the operator
\[
g \mapsto K_t^bg(x)=\int\limits_0^t\int\limits_0^{\infty}k^b_{t-s}(x,y)g(y,s)dyds.
\]
A case of particular importance in applications is when $g \in \CI([0,\infty)\times(0,\infty))$,  
for example $g$ is any solution to~\eqref{eq:mhe}. We begin with a lemma.
\begin{lemma} Suppose that $b\geq 0,$ and $f\in\cC_b^{m}([0,\infty))$; then 
\[
u(x,t)=\int\limits_0^{\infty}k^b_t(x,y)f(y)\, dy
\]
satisfies
\[
\pa_t^ju(x,t)=\int\limits_0^{\infty}k^b_t(x,y)L_b^jf(y)\, dy
\] 
for $2j\leq m$ and $t>0.$
\end{lemma}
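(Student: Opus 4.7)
The plan is induction on $j$, with $j=0$ being the definition of $u$. Assuming the identity for some $j$ with $2(j+1)\leq m$, I would first differentiate under the integral sign to get
\[
\pa_t^{j+1}u(x,t)=\int_0^{\infty}\pa_t k^b_t(x,y)\,L_b^j f(y)\,dy.
\]
This is justified by the exponential decay in $y$ of $\pa_t k^b_t = L_b^{(x)} k^b_t$ (a consequence of the pointwise estimates of Lemma~\ref{lem1} after differentiating in $x$) against the at-most polynomial growth in $y$ of $L_b^j f$ of degree $\leq j$, since each application of $L_b$ introduces a factor of $y$ in the principal term but consumes two derivatives.

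The key tool, already recorded in the proof of Proposition~\ref{prop3}, is that $k^b_t(x,y)$ satisfies the adjoint equation in the $y$ variable,
\[
\pa_t k^b_t(x,y)=[\pa_y^2(y\,\cdot)-b\pa_y]k^b_t(x,y),\qquad \lim_{y\to 0^+}\bigl[\pa_y(yk^b_t)-b k^b_t\bigr]=0.
\]
Substituting this representation of $\pa_t k^b_t$ and integrating by parts twice in $y$ yields
\[
\pa_t^{j+1}u(x,t)=\Bigl[\bigl(\pa_y(yk^b_t)-bk^b_t\bigr) L_b^j f-yk^b_t\,\pa_y L_b^j f\Bigr]_{y=0^+}^{y=\infty}+\int_0^{\infty}k^b_t(x,y)\,L_b^{j+1}f(y)\,dy.
\]

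All the boundary contributions vanish. At $y=\infty$, the exponential decay of $k^b_t$ and $\pa_y k^b_t$ from Lemma~\ref{lem1} dominates the polynomial growth of $L_b^j f$ and $\pa_y L_b^j f$. At $y=0^+$, the piece $yk^b_t\,\pa_y L_b^j f$ is killed by the prefactor $y$, since $k^b_t$ stays bounded and $\pa_y L_b^j f$ is bounded near $0$ (using $f\in\cC^m_b$ with $m\geq 2j+2$, so that $L_b^j f\in\cC^2$ near the origin); the remaining piece reduces to $\bigl[\lim_{y\to 0^+}(\pa_y(yk^b_t)-bk^b_t)\bigr]\cdot L_b^j f(0)=0$ by the adjoint boundary condition. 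This completes the inductive step.

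The main obstacle is verifying the vanishing of the boundary terms at $y=0^+$: this rests simultaneously on the adjoint boundary condition for the kernel (as isolated in the proof of Proposition~\ref{prop3}) and on the $\cC^m_b$ regularity hypothesis, which is precisely what guarantees that $L_b^j f$ and $\pa_y L_b^j f$ remain bounded as $y\to 0^+$. The differentiation-under-the-integral at each step is a routine consequence of the decay estimates in Lemma~\ref{lem1}, so the induction proceeds as long as $2(j+1)\leq m$.
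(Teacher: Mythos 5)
Your core computation --- exploiting that $k^b_t$ satisfies the adjoint equation in $y$ and integrating by parts twice --- is the same engine the paper uses, but your treatment of the boundary terms at $y=0$ contains a misstatement for $0\le b<1$ and a genuine gap at $b=0$. From \eqref{psifrm1}, $k^b_t(x,y)=t^{-b}y^{b-1}e^{-(x+y)/t}\psi_b(xy/t^2)$, so the kernel is \emph{not} bounded as $y\to 0^+$ when $b<1$; for $0<b<1$ this is harmless, since the relevant product $y\,k^b_t\,\pa_yL_b^jf=O(y^{b})$ still tends to zero, so your inductive step survives with corrected bookkeeping. At $b=0$, however, the argument breaks: by \eqref{k0decmp}, $k^0_t(x,y)=e^{-x/t}\delta(y)+k^{0,D}_t(x,y)$ has an atom at $y=0$, so the boundary evaluation at $y=0^+$ is not even defined for the full kernel, and the pointwise adjoint boundary condition you quote from the proof of Proposition~\ref{prop3} is established there only for $b>0$ (that proof handles $b=0$ by a separate device). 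Indeed, for the regular part one has $\lim_{y\to 0^+}\pa_y\bigl(y\,k^{0,D}_t(x,y)\bigr)=\tfrac{x}{t^2}e^{-x/t}\neq 0$, so the boundary term $\bigl(\pa_y(yk)-bk\bigr)L_b^jf$ does \emph{not} vanish unless $L_b^jf(0)=0$; the missing contribution is exactly what the time derivative of the atom supplies, and that cancellation must be tracked explicitly rather than dismissed.

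The paper sidesteps all of this by a different decomposition: it first proves the identity for $f$ vanishing to second order at $y=0$, in which case every boundary term dies from the vanishing of the data alone --- uniformly in $b\ge 0$, atom included, with no boundary condition on the kernel invoked --- and then writes a general $f$ as $\tf+q_2$, where $q_2$ is a cut-off Taylor polynomial, disposing of $q_2$ via Proposition~\ref{prop99} and the explicit (polynomial-preserving) action of the semi-group on polynomials; induction then yields general $m$. To repair your argument you would need either this Taylor-subtraction device, or, at $b=0$, to split off the atom first (equivalently, replace $f$ by $f-f(0)$ and work with $k^{0,D}_t$) before integrating by parts; as written, the step ``the remaining piece reduces to zero by the adjoint boundary condition'' fails precisely in the case $b=0$ that the statement includes.
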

\begin{proof} Suppose that $m=2$ and $f$ vanishes to order $2$ at $x=0.$ Using
the fact that $(\pa_t-(L^{b}_y)^t)k_t(x,y)=0$ and a simple integration by parts
argument we easily see that
\[
\pa_tu(x,t)=\int\limits_0^{\infty}k^b_t(x,y)L_bf(y)dy.
\]
The second order vanishing of $f$ at $0$ implies that the boundary terms at $x=0$ vanish.

Choose a smooth function $\chi,$ which equals $1$ in $[0,1]$ and is supported in $[0,2].$ 
If $f$ does not vanish at $x=0$, then we let
\[
q_2(x)=\chi(x)\left(f(0)+f'(0)x+f''(0)\frac{x^2}{2}\right), \text{ and }\tf=f-q_2.
\]
From the argument above, 
\[
\pa_t\int\limits_0^{\infty}k^b_t(x,y)\tf(y)dy=\int\limits_0^{\infty}k^b_t(x,y)L_b\tf(y)dy,
\]
and then by Proposition~\ref{prop99},
\[
\pa_t\int\limits_0^{\infty}k^b_t(x,y)q_2(y)dy=\int\limits_0^{\infty}k^b_t(x,y)L_bq_2(y)dy.
\]
This argument can be applied inductively to obtain the case of general $m.$
\end{proof}
Combining this lemma with Corollary~\ref{cor33}, we conclude
\begin{corollary}\label{cor44}
Suppose that $b\geq 0,$ and $f\in\cC_b^{m}([0,\infty)),$ then 
\[
u(x,t)=\int\limits_0^{\infty}k^b_t(x,y)f(y)dy,
\]
satisfies
 \[
\pa_t^j\pa_x^ku(x,t)=\int\limits_0^{\infty}k^{b+k}_t(x,y)L_{b+k}^j\pa_y^kf(y)dy,
\] 
provided $2j+k\leq m$ and $t>0.$
\end{corollary}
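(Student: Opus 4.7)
The plan is to combine the two results immediately preceding this corollary: apply Corollary~\ref{cor33} first to move the $k$ spatial derivatives inside the integral and onto $f$, then apply the preceding lemma to do the same with the $j$ time derivatives for the resulting $\cC^{m-k}$ datum. Concretely, I would first show that for $k\leq m$ and $f\in\cC^m_b([0,\infty))$,
\[
\pa_x^k u(x,t) = \int_0^\infty k^{b+k}_t(x,y)\,\pa_y^k f(y)\, dy.
\]
Setting $v(x,t):=\pa_x^k u(x,t)$, this identity says precisely that $v$ is the $L_{b+k}$-heat semigroup applied to the datum $g:=\pa_y^k f\in\cC^{m-k}_b([0,\infty))$. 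Since $2j\leq m-k$, the preceding lemma (with $b$ replaced by $b+k$ and $m$ by $m-k$) immediately yields
\[
\pa_t^j v(x,t)=\int_0^\infty k^{b+k}_t(x,y)\,L_{b+k}^j g(y)\, dy,
\]
which is exactly the desired formula.

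The main step is therefore to upgrade Corollary~\ref{cor33} from $\cC^m_c([0,\infty))$ to $\cC^m_b([0,\infty))$. For this I would introduce a smooth cutoff $\chi_R$ which equals $1$ on $[0,R]$ and vanishes outside $[0,R+1]$, and decompose $f=\chi_R f+(1-\chi_R)f$. To the compactly supported piece $\chi_R f$ Corollary~\ref{cor33} applies directly. For the remainder, the asymptotic expansion \eqref{psiexp1} gives pointwise exponential decay of $k^b_t(x,y)$ and $k^{b+k}_t(x,y)$ in $y$ for $x$ in a compact set and $t$ bounded away from $0$, uniformly enough to differentiate under the integral sign in $x$. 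Since $(1-\chi_R)f\in\cC^m_b$ with derivatives uniformly bounded independent of $R$, one then uses Lemma~\ref{lem1} to see that both
\[
\int_0^\infty k^b_t(x,y)(1-\chi_R(y))f(y)\,dy\quad\text{and}\quad \int_0^\infty k^{b+k}_t(x,y)\pa_y^k\bigl((1-\chi_R)f\bigr)(y)\,dy
\]
tend to $0$ as $R\to\infty$ (locally uniformly in $(x,t)$ with $t>0$), at which point sending $R\to\infty$ gives the desired identity on $\cC^m_b$.

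The main obstacle is this passage from compactly supported to bounded data: one has to check that integration by parts in $y$ (implicit in the proof of Corollary~\ref{cor33} via Lemma~\ref{difnlem}) continues to work, because boundary terms at $y=\infty$ must vanish. This is precisely what Lemma~\ref{lem1} and the subexponential growth of $k^{b+k}_t(x,y)$ guarantee; the boundary terms at $y=0$ vanish for the same reasons as in Corollary~\ref{cor33}, by the zero-flux condition on $k^{b+k}_t$. Once this is in place the rest is clean: $\pa_x^k u$ solves $\pa_t v=L_{b+k} v$ with initial datum in $\cC^{m-k}_b$, and iterating the $t$-derivative identity $j$ times is permitted exactly when $2j\le m-k$, which is the hypothesis $2j+k\le m$.
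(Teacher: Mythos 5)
Your argument is essentially the paper's own: the paper proves this corollary precisely by combining the preceding lemma (for the $t$-derivatives) with Corollary~\ref{cor33} (for the $x$-derivatives), exactly as you do. Your extra cutoff argument extending Corollary~\ref{cor33} from $\cC^m_c$ to $\cC^m_b$ data via the exponential decay of $k^b_t(x,y)$ in $y$ is sound and fills in a step the paper leaves implicit.
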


We can now examine the regularity of $K^b_tg.$
\begin{proposition}\label{sptmders} If $b\geq 0,$ $T>0,$ and 
  \[
g\in \cC^0_b([0,\infty)\times [0,T))\cap\cC^m_b([0,\infty)\times (0,T)),  
  \]
then, for $0<t<T,$ the derivatives $\pa_t^j\pa_x^kK^b_tg$ are continuous for
$2j+k\leq m;$ moreover, if $m\geq 2,$ then 
\begin{equation}\label{inhompde}
(\pa_t-L_b)K^b_tg=g
\end{equation}
\end{proposition}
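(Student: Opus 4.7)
The plan is to treat $K^b_tg$ via the Duhamel representation
\[
K^b_t g(x)=\int_0^t u_s(x)\,ds,\qquad u_s(x)=\int_0^{\infty}k^b_{t-s}(x,y)\,g(y,s)\,dy,
\]
and reduce everything to the homogeneous theory already developed. First I would verify that $u_s(x)$ is jointly continuous on $\{0\le s<t\}\times[0,\infty)$, and that $u_s(x)\to g(x,t)$ locally uniformly in $x$ as $s\to t^-$; this last fact is the $\cC^0$--semi-group statement of Proposition~\ref{prop99}, applied fiberwise in $s$. The uniform control in $y$ at infinity needed to make sense of $u_s(x)$ for the bounded datum $g(\cdot,s)$ is furnished by the exponential decay estimate in Lemma~\ref{lem1}. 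This already handles the case $j=k=0$ and shows $K^b_t g\in\cC^0_b([0,\infty)\times[0,T))$.

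Next, for $\pa_x^k$ with $k\le m$, I would approximate $g(\cdot,s)$ in $\cC^m_b$ by the compactly supported truncations $\chi(y/R)g(y,s)$ and apply Corollary~\ref{cor33} before letting $R\to\infty$, arriving at
\[
\pa_x^k u_s(x)=\int_0^{\infty}k^{b+k}_{t-s}(x,y)\,\pa_y^k g(y,s)\,dy.
\]
Each integrand is a heat integral of a bounded function against $k^{b+k}_{t-s}$, so the fact that $\int_0^\infty k^{b+k}_\tau(x,y)\,dy$ is uniformly bounded in $\tau\in(0,T)$, together with Lemma~\ref{lem1}, produces bounds on $\pa_x^k u_s(x)$ that are locally uniform in $x$ and integrable in $s\in[0,t]$. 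This permits differentiation under the outer integral, giving $\pa_x^k K^b_t g(x)=\int_0^t\pa_x^k u_s(x)\,ds$, which is continuous in $(x,t)$ by dominated convergence.

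For the time derivatives with $m\ge 2$, I would carry out the classical Duhamel computation. For $s<t$ the kernel identity yields $\pa_t u_s(x)=L_b u_s(x)$, while differentiating the upper limit produces the boundary contribution $\lim_{s\to t^-}u_s(x)=g(x,t)$ via Proposition~\ref{prop99}; combining these,
\[
\pa_t K^b_t g(x)=g(x,t)+\int_0^t L_b u_s(x)\,ds=g(x,t)+L_b K^b_t g(x),
\]
which is precisely $(\pa_t-L_b)K^b_tg=g$. Mixed higher derivatives $\pa_t^j\pa_x^k$ with $2j+k\le m$ then follow by iterating this identity and invoking the spatial regularity established in the previous step, together with Corollary~\ref{cor44} applied to $L_b^j\pa_y^k g$.

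The main obstacle is justifying the interchange of $L_b$ with the outer $s$--integral, since $k^b_{t-s}$ is singular as $s\to t^-$. The resolution is the previous step: using Corollary~\ref{cor33} I can push the two spatial derivatives hidden in $L_b$ onto $g$, writing
\[
L_b u_s(x)=x\int_0^\infty k^{b+2}_{t-s}(x,y)\,\pa_y^2 g(y,s)\,dy+b\int_0^\infty k^{b+1}_{t-s}(x,y)\,\pa_y g(y,s)\,dy,
\]
each of which is uniformly bounded in $s$ by $\|g\|_{\cC^2_b}$. This is exactly where the assumption $m\ge 2$ is needed and makes the interchange legitimate.
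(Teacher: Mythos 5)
Your argument is correct and follows essentially the same route as the paper: both rest on the Duhamel representation in the time slice, on Corollary~\ref{cor33}/Corollary~\ref{cor44} to move derivatives off the singular kernel onto $g$ for $s$ near $t$, and on the semigroup convergence (Proposition~\ref{prop99}) to recover the inhomogeneous term $g(x,t)$. The paper merely organizes the limit differently, truncating the time integral at $t-\epsilon$ and splitting it at $s=t/2$ before letting $\epsilon\to 0^+$, which is a cosmetic variant of your direct differentiation of the upper limit.
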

\begin{proof} For $t>2\epsilon>0,$ we define
\[
K^b_{t,\epsilon}g(x)=\int\limits_0^{t-\epsilon}\int\limits_0^{\infty}k^b_{t-s}(x,y)g(y,s)dyds.
\]
For $\epsilon>0$ it follows from the corollary, that the derivatives $\pa_t^j\pa_x^kK^b_{t,\epsilon}g$ 
exist, provided $2j+k\leq m$ and can be expressed as
\begin{multline*}
\pa_t^j\pa_x^k  K^b_{t,\epsilon}g(x)=
\int\limits_0^{\frac t2}\int\limits_0^{\infty}\pa_t^j\pa_x^kk^b_{t-s}(x,y)g(y,s)\, dyds+\\
\int\limits_{\frac t2}^{t-\epsilon}\int\limits_0^{\infty}k^{b+k}_{t-s}(x,y)L_{b+k}^j\pa_y^kg(y,s)\, dyds. 
\end{multline*}
If $2j+k\leq m,$ then $\pa_t^j\pa_x^k  K^b_{t,\epsilon}g(x)$ converges
locally uniformly to a continuous function. This establishes the existence of
these derivatives. If $m\geq 2,$ then differentiating, for any $0<\epsilon<t$ we have that
\begin{equation}\label{apprxpde}
  (\pa_t-L_b)K^b_{t,\epsilon}g=\int\limits_0^{\infty}k^b_{\epsilon}(x,y)g(y,t-\epsilon)dy.
\end{equation}
As $m\geq 2,$ the limiting function $K^b_tg$ has one time, and two spatial
derivatives, which are the limits of the corresponding derivatives of
$K^b_{t,\epsilon}g.$ Thus letting $\epsilon\to 0^+$ in~\eqref{apprxpde} gives \eqref{inhompde}. 
\end{proof}

We can finally extend the convergence result Proposition~\ref{prop88} to arbitrary data in $\cdC^0([0,\infty)).$ 
\begin{proposition}\label{prop88.1}
Fix $f\in\cdC^0([0,\infty)),$ then
\[
\lim_{b \to 0}\int_0^\infty k_t^b(x,y)f(y)\, dy  = \int_0^\infty k_t^0(x,y)f(y)\, dy.
\]
For any $T>0$ this convergence takes place in the $\cC^0([0,\infty)\times [0,T])$-topology.
\end{proposition}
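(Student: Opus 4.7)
The plan is to prove Proposition~\ref{prop88.1} by a standard density and equicontinuity argument, leveraging Proposition~\ref{prop88} on a dense subclass together with a uniform operator norm bound.

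Step~1: Establish a uniform contraction bound. Because each kernel $k_t^b(x,y)$ is pointwise non-negative and because the constant function $1$ is a polynomial solution with $L_b 1 = 0$, applying the polynomial-case representation formula (just before Proposition~\ref{prop99}) gives $\int_0^\infty k_t^b(x,y)\,dy = 1$ for every $b\geq 0$, $t>0$, $x\geq 0$ (for $b=0$ one uses the decomposition \eqref{k0decmp}, whose delta mass plus the Dirichlet part still integrates to $1$). Hence the operators $T_t^b f(x) := \int_0^\infty k_t^b(x,y) f(y)\, dy$ satisfy
\[
\|T_t^b f\|_{L^\infty([0,\infty)\times[0,T])} \leq \|f\|_{\cC^0([0,\infty))}
\]
uniformly in $b\geq 0$ and $T>0$.

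Step~2: Choose a dense subclass. Given $f\in\cdC^0([0,\infty))$ and $\e>0$, use the density of $\cC^\infty_c([0,\infty))$ in $\cdC^0([0,\infty))$ to pick $g\in\cC^\infty_c([0,\infty))$ with $\|f-g\|_{\cC^0}<\e$. Such $g$ is Schwartz when extended by zero, so $\hg \in L^1(\bbR)$, and Proposition~\ref{prop88} (together with the remark immediately following it) applies to $g$.

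Step~3: Combine by a $3\e$-argument. Write
\[
T_t^b f - T_t^0 f = T_t^b(f-g) + (T_t^b g - T_t^0 g) + T_t^0(g-f).
\]
By Step~1, the first and third terms are bounded by $\e$ in $L^\infty([0,\infty)\times[0,T])$, uniformly in $b$. By Proposition~\ref{prop88} and its uniform-convergence addendum, the middle term tends to zero in $L^\infty([0,\infty)\times[0,T])$ as $b\to 0$. Therefore
\[
\limsup_{b\to 0}\|T_t^b f - T_t^0 f\|_{L^\infty([0,\infty)\times[0,T])} \leq 2\e,
\]
and since $\e>0$ was arbitrary, the desired convergence in the $\cC^0([0,\infty)\times[0,T])$-topology follows.

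The only non-routine ingredient is the uniform contraction bound in Step~1, and in particular the identity $\int_0^\infty k_t^b(x,y)\,dy = 1$ in the $b=0$ case, where the distributional term $e^{-x/t}\delta(y)$ must be accounted for; beyond that, the argument is purely a density reduction.
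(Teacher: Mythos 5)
Your proposal is correct and is essentially the paper's proof: the paper likewise approximates $f$ uniformly by $f_n\in\cC^0_c([0,\infty))$ with $\hat f_n\in L^1$, controls $T^b_t(f-f_n)$ uniformly in $b$ (via the maximum principle, which is the same content as your positivity-plus-unit-mass bound, already recorded in the proof of Proposition~\ref{prop99}), and applies Proposition~\ref{prop88} to the approximants. One small inaccuracy in Step~2: a function $g\in\cC^\infty_c([0,\infty))$ extended by zero is \emph{not} Schwartz (or even $\cC^1$) unless $g$ vanishes to infinite order at $x=0$; to get $\hat g\in L^1$ you should instead take $g$ to be the restriction to $[0,\infty)$ of a function in $\cC^\infty_c(\bbR)$ (the representation~\eqref{fourepb} then applies with the Fourier transform of that extension, since $k_t^b(x,\cdot)$ is supported in $y\geq 0$), after which your $3\e$-argument goes through unchanged.
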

\begin{proof} Choose a sequence $\{f_n\} \subset\cC^0_c([0,\infty))$ with all $\hat{f}_n \in L^1$, 
and such that $f_n$ converges uniformly to $f.$ Let $u^b$ denote the solutions to~\eqref{eq:mhe} 
with initial data $f$ and $u^b_n$ the solutions with initial values $f_n.$ The maximum principle 
implies that, for any $n,$ 
\[
\|u^b-u^0\|_{\cC^0([0,\infty)\times [0,T])}\leq 2\|f-f_n\|_{\infty}+
\|u^b_n-u^0_n\|_{\cC^0([0,\infty)\times [0,T])}.
\]
Given $\epsilon>0,$ fix some $n$ so that $\|f-f_n\|_{\infty}<\epsilon.$  Applying Proposition~\ref{prop88} 
to the $u^b_n$ gives a positive $b_0$  so that if $b<b_0,$ then 
\[
\|u^b_n-u^0_n\|_{\cC^0([0,\infty)\times [0,T])}<\epsilon,
\]
as $\epsilon>0$ is arbitrary, this completes the proof of the proposition.
\end{proof}

\section{Perturbation estimates}\label{s.pests}
In order to use the model heat kernels $k^b_t$ in perturbative constructions for the heat kernels of 
general Wright-Fisher operators, it is necessary to prove estimates in $\cC^\ell$ for every $\ell \geq 0$ 
for operators of the form
\begin{equation}
g \mapsto  A^b_tg(x)=\int\limits_{0}^t \int\limits_{0}^{\infty}k^b_{t-s}(x,z)h(z)z\pa_zg(z,s)\, dzds,
\end{equation}
were $h \in \CIc([0,\infty))$ is fixed.  

Corollary~\ref{cor33}, the Leibniz formula and the mapping results established in the previous section 
show that if $g \in \cC^\ell$, then 
\begin{multline}\label{leibfrm0}
\pa_x^\ell A^b_tg=A^{b+\ell}_t\pa_y^\ell g\, + \\
\sum_{j=1}^{\ell}\left(\begin{matrix}l\\j\end{matrix}\right)
\int\limits_{0}^{t}\int\limits_0^{\infty}k^{b+\ell}_{t-s}(x,z)\pa_z^{j}(zh(z))\pa_z^{\ell+1-j}g(z,s)\, dzds.
\end{multline}
Hence to estimate $A_t^b g$ on $\cC^\ell$ for arbitrary $\ell \geq 0$ it suffices to prove mapping properties 
for $A^{b'}_t$ on $\cC^0$ for all $b'\geq  0$.

The following lemma is the key to all that follows:
\begin{lemma}\label{lem2} There is a constant $C_b$, defined for each $b \geq 0$ and uniformly
bounded for $b$ in any compact interval $[0,B]$, such that if $f\in\cC_b^0([0,\infty))$ and 
\[
u(x,t)=\int\limits_0^{\infty}k^b_t(x,y)f(y)\, dy.
\]
then
\[
|\pa_z u(z,s)|\leq C_b\frac{\|f\|_{\infty}}{s+\sqrt{zs}}.
\]
\end{lemma}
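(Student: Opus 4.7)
The heart of the argument is a clean identity for $\partial_x k^b_t$ obtained from the relation $\psi_b'=\psi_{b+1}$ (immediate from the series defining $\psi_b$). Differentiating the representation $k^b_t(x,y)=\frac{y^{b-1}}{t^{b}}e^{-(x+y)/t}\psi_b(xy/t^2)$ in $x$ and collecting terms gives
\[
\partial_x k^b_t(x,y)=\frac{1}{t}\bigl[k^{b+1}_t(x,y)-k^b_t(x,y)\bigr].
\]
Hence for $u(x,t)=\int_0^\infty k^b_t(x,y)f(y)\,dy$ we have
\[
\partial_x u(x,t)=\frac{1}{t}\int_0^\infty\!\bigl[k^{b+1}_t(x,y)-k^b_t(x,y)\bigr]f(y)\,dy,
\]
so the lemma reduces to showing that
\[
I(x,t):=\int_0^\infty\!\bigl|k^{b+1}_t(x,y)-k^b_t(x,y)\bigr|\,dy\;\leq\; \frac{C_b\, t}{t+\sqrt{xt}}
\]
uniformly for $b$ in a compact set.

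The bound splits into two regimes according to the size of $\alpha:=x/t$. When $\alpha\le 1$ (so $t+\sqrt{xt}\asymp t$), both $k^b_t$ and $k^{b+1}_t$ are probability densities (the semigroup preserves constants, as established in \S\ref{s.mapping}), so $I(x,t)\le 2$, which already gives $|\partial_x u|\le 2\|f\|_\infty/t\le C_b\|f\|_\infty/(t+\sqrt{xt})$. The substantive case is $\alpha\gg 1$, where a cancellation between the two kernels must be exploited.

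For this regime I use the asymptotic expansion \eqref{psiexp1}. Writing $z=xy/t^2$, the leading terms give
\[
(y/t)\psi_{b+1}(z)-\psi_b(z)\;\sim\;\frac{e^{2\sqrt{z}}z^{-1/4-b/2}}{\sqrt{4\pi}}\,\frac{\sqrt{y}}{t}\bigl(\sqrt{y}-\sqrt{x}\bigr),
\]
since the prefactors $A_b=A_{b+1}=1/\sqrt{4\pi}$ match. Combined with the Gaussian identity $-(\alpha+\beta)+2\sqrt{\alpha\beta}=-(\sqrt\alpha-\sqrt\beta)^2$ and some algebra with exponents, one obtains
\[
k^{b+1}_t(x,y)-k^b_t(x,y)\;\approx\;\frac{C_b}{\sqrt{t}\,x^{1/4+b/2}}\,y^{b/2-3/4}\,\bigl(\sqrt{y}-\sqrt{x}\bigr)\,e^{-(\sqrt{x}-\sqrt{y})^2/t},
\]
plus lower-order errors controlled by the next term in \eqref{psiexp1}. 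Substituting $\sqrt{y}=\sqrt{x}+\sqrt{t}\,u$ and integrating converts this into a Gaussian moment integral in $u$, yielding
$I(x,t)\le C_b\sqrt{t/x}$, and thus $|\partial_x u|\le C_b\|f\|_\infty/\sqrt{xt}$.

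The main technical obstacle is making this asymptotic analysis uniform: the expansion \eqref{psiexp1} is valid only when $z=xy/t^2$ is large, so one must split the $y$-integral into (i)~the near-diagonal region $|\sqrt{y}-\sqrt{x}|\lesssim\sqrt{t}$, where the leading Gaussian calculation above applies; (ii)~a Gaussian-tail region where the exponential factor $e^{-(\sqrt{x}-\sqrt{y})^2/t}$ kills everything; and (iii)~the region $y\ll t$, where the Bessel asymptotics fail but the prefactor $e^{-x/t}$ is exponentially small because $\alpha\gg 1$. The contributions from (ii) and (iii) are negligible, and the uniform dependence of constants on $b\in[0,B]$ follows from uniformity of the expansion constants $c_{b,j}$ in \eqref{psiexp1}. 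Combining the two regimes and using $\min(1/t,1/\sqrt{xt})\le 2/(t+\sqrt{xt})$ completes the proof.
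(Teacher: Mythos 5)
Your proposal is correct in substance, and it overlaps with the paper's argument in its core computation while repackaging the rest in a genuinely different (and in places simpler) way. The paper differentiates the kernel directly, writes $\int_0^\infty|\pa_z k^b_s(z,y)|\,dy=\frac1s\varphi(z/s)$ with $\varphi(\lambda)=\int_0^\infty w^{b-1}e^{-w}e^{-\lambda}|w\psi_b'(\lambda w)-\psi_b(\lambda w)|\,dw$, and then treats bounded $\lambda$ by a careful pointwise analysis (splitting off $\psi_b(0)=1/\Gamma(b)$ to get uniformity as $b\to 0$) and large $\lambda$ by exactly the cancellation-plus-Laplace computation you describe, finishing $b=0$ by a separate limiting argument through $k^{0,D}_t$. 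Your route instead exploits $\psi_b'=\psi_{b+1}$ to get the identity $\pa_x k^b_t=\frac1t\bigl(k^{b+1}_t-k^b_t\bigr)$ (which is the same integrand as the paper's, just recognized as a difference of two kernels), and then the regime $x\lesssim t$ comes for free from positivity and unit mass of $k^b_t$ and $k^{b+1}_t$ -- a bound that is trivially uniform in $b\in[0,B]$ and valid at $b=0$ (the identity persists distributionally there, the $\delta$-term contributing only $t^{-1}e^{-x/t}\delta(y)$), so you avoid both the paper's delicate bounded-$\lambda$ analysis near $b=0$ and its separate $b=0$ limiting argument. The regime $x\gg t$ in your plan is essentially the paper's large-$\lambda$ Laplace argument in different variables, and your three-region split (near-diagonal where $xy/t^2$ is large, Gaussian tail, $y\ll t$) is the right way to make it rigorous; the only detail to write out carefully is uniformity in $b$ in region (iii), where the potentially nonintegrable factor $y^{b-1}$ is tamed because it appears multiplied by $\psi_b$'s constant term $1/\Gamma(b)$, giving $1/\Gamma(b+1)$ after integration -- the same observation the paper makes. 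In short: same analytic heart, but your decomposition by regimes and the kernel-difference identity buy a cleaner, more uniform treatment of small $x/t$ and of $b=0$, at the cost of leaving the matched-asymptotics error control (with constants uniform for $b\in[0,B]$) to be written out in full.
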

\begin{proof}
It is enough to prove that for each $b > 0$ there is a constant $C_b$ so that 
\begin{equation}
\phi_{s,b}(z) : =\int\limits_{0}^{\infty}\left|\frac{\pa k^b_s}{\pa z}(z,y)\right|\, dy \leq\frac{C_b}{s+\sqrt{sz}} =
\frac{C_b}{s (1 + \sqrt{z/s})}, 
\label{bscest1}
\end{equation}
and that $C_b$ is uniformly bounded above on any interval $(0,B]$, i.e.\ it does not depend on a positive 
lower bound for $b$. The case $b=0$ is then obtained from a separate limiting argument.

We first compute that 
\[
\phi_{s,b}(z)=\frac{1}{s}\int\limits_{0}^{\infty}\left(\frac{y}{s}\right)^be^{-\frac{z+y}{s}}
\left|\left(\frac{y}{s}\right)\psi_b'\left(\frac{zy}{s^2}\right)-
\psi_b\left(\frac{zy}{s^2}\right)\right|\frac{dy}{y}.
\]
Set $w=y/s$ and $\lambda=z/s,$ so that $\phi_{s,b}(z)=\frac{1}{s}\varphi\left(\frac{z}{s}\right),$ where
\[
\varphi(\lambda) = \int\limits_{0}^{\infty}w^{b-1}e^{-w}e^{-\lambda}|w\psi_b'(\lambda
w)-\psi_b(\lambda w)|\, dw.
\]
Hence we only need prove that $\varphi(\lambda) \leq C_b/(1 + \sqrt{\lambda})$. 

Using the asymptotic formul\ae
\begin{eqnarray*}
\psi_b(w) & \sim & \frac{w^{\frac14-\frac{b}2}e^{2\sqrt{w}}}{\sqrt{4\pi}}(1+O(w^{-\frac12}))\qquad \text{and} \\
\psi_b'(w) & \sim & \frac{w^{-(\frac14+\frac{b}2)}e^{2\sqrt{w}}}{\sqrt{4\pi}}(1+O(w^{-\frac12})),
\end{eqnarray*}
we see that $\varphi(\lambda) \leq C_{b,\Lambda}$ for $\lambda \leq \Lambda$.  It is less obvious that this 
constant is bounded as $b\to 0$ since  the integrand appears to become nonintegrable at $w=0$ in that limit. 
To analyze this, define $\tpsi_b(z)=\psi_b(z)-\psi_b(0)$ and recall that $\psi_b(0) = 1/\Gamma(b)$, so that
\[
\varphi(\lambda) \leq  \int\limits_{0}^{\infty}w^{b-1}e^{-w}e^{-\lambda}\left[|w\psi_b'(\lambda
w)|+|\tpsi_b(\lambda w)|+\frac{1}{\Gamma(b)}\right]\, dw.
\]
The integral of the last term in brackets is identically equal to $e^{-\lambda}$. As for the other two terms, note 
that the coefficients in the error terms in these asymptotic formul\ae\ are bounded as $b \to 0$, so the 
integral from $1$ to $\infty$ converges uniformly, independently of $b$. If $w \leq 1$ (and $\lambda$ bounded),
these terms are ${\mathcal O}(w)$, hence this part of the integral is also uniformly bounded.  Hence 
$C_{b,\Lambda}$ is uniform in $b \in [0,B]$ for any fixed $B, \Lambda$. 

Now consider what happens when $\lambda\to\infty$. Suppose first that $b > 0$. Break the integral defining
$\varphi$ into the sum $J_b' + J_b''$, where $J_b'$ is the integral from $0$ to $1/\sqrt{\lambda}$ and 
$J_b''$ is the integral from $1/\sqrt{\lambda}$ to $\infty$.  It is straightforward that $J_b'\leq Ce^{-c\lambda}$,
for $c,C>0$ which are independent of $b$. For the other part use the asymptotics of $\psi_b$ and $\psi_b'$
to get
\[
J_b''\leq C_b\sqrt{\lambda}\int\limits_{0}^{\infty} \left(\frac{w}{\lambda}\right)^{\frac b2+\frac14}
e^{-\lambda(1-\sqrt{\frac{w}{\lambda}})^2} \left|1-\sqrt{\frac{w}{\lambda}}\right|\, \frac{dw}{w}.
\]
Changing variables to $y=\sqrt{w/\lambda}-1$ transforms this to  
\[
J_b''\leq C_b\sqrt{\lambda}\int\limits_{-1}^{\infty}
\left(1+y\right)^{b-\frac 12}e^{-\lambda y^2} |y|\, dy,
\]
which now, by Laplace's method, satisfies $J_b''\leq \frac{C_b}{\sqrt{\lambda}}$. 
This proves the estimate for $b > 0$ bounded away from $\infty$. 

To finish the proof, observe that for any $\eta>0$ and fixed $z,s > 0$, we have
\[
\lim_{b\to 0^+}\int\limits_{\eta}^{\infty}|\pa_zk^b_s(z,y)|\, dy= \int\limits_{\eta}^{\infty}|\pa_zk^{0,D}_s(z,y)|\, dy
\]
Since the constant $C_b$ is uniformly bounded as $b \to 0$, there is a constant $C_0$ so that for any $\eta>0$, 
\[
\int\limits_{\eta}^{\infty}|\pa_zk^{0,D}_s(z,y)|\, dy\leq \frac{C_0}{s(1+\sqrt{z/s})}.
\]
The right hand side is independent of $\eta$, so letting $\eta\to 0$ gives the same estimate for the integral on  
all of ${\mathbb R}^+$.  Setting $z=0$ shows that
\[
\int\limits_{\eta}^{\infty}|(\pa_zk^{0,D}_s)(0,y)|\, dy=\frac{\psi_2(0)}{s}.
\]
Since 
\[
\int\limits_0^{\infty}k^0_s(z,y)f(y)\, dy = \int\limits_0^{\infty}k^{0,D}_s(z,y)(f(y)-f(0))\, dy+f(0),
\]
this completes the case when $b=0$ as well.
\end{proof}

\subsection*{Mapping properties of $(A_t^b)^j$}
Now we turn to estimates of $A^b_t$ and its iterates.  Because the proofs of
the next two Propositions are quite technical, we state the results here and
relegate their proofs to Appendix~\ref{pertthryprfs} at the end of the paper.

We assume that $g \in \cC^0([0,\infty)\times [0,T])\cap \cC^1((0,\infty)\times
(0,T])$ with a very specific blowup for $\pa_xg$ as $x\to 0^+$, as suggested by
Lemma~\ref{lem2}.
\begin{proposition}\label{prop1515} Define the sequence of constants 
\[
d_j=\frac{(\pi)^{\frac{j+1}{2}}}{\Gamma\left({\frac{j+1}{2}}\right)}.
\]
For any smooth $h$ with $\supp h\subset [0,L]$, if $g\in\cC^0([0,\infty)\times [0,T])\cap\cC^1((0,\infty)\times (0,T])$
satisfies 
\[
|\pa_xg(x,t)|\leq\frac{M}{\sqrt{xt}},
\]
then
\begin{equation}\label{Aestj}
|(A^b_t)^jg(x)|\leq \frac{2d_{j-1}}{j}MC_b^{j-1}(\sqrt{L}\|h\|_{\infty})^jt^{\frac{j}2}
\end{equation}
and
\begin{equation}\label{dAestj}
|\pa_x(A^b_t)^jg(x)|\leq \frac{d_{j}M(C_b\sqrt{L}\|h\|_{\infty})^jt^{\frac {j-1}2}}{\sqrt{x}},
\end{equation}
where $C_b$ is the constant appearing in Lemma~\ref{lem2}.
\end{proposition}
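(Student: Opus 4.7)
The plan is to prove both estimates simultaneously by induction on $j$, since \eqref{dAestj} for $(A_t^b)^j g$ at time $s$ provides exactly the input hypothesis on $|\pa_z g(z,s)|$ needed for the next application of $A_t^b$. The main ingredients are Lemma~\ref{lem2} (the $L^1$-bound on the $x$-derivative of the kernel convolution), the conservation identity $\int_0^\infty k^b_s(x,z)\,dz = 1$, and the Beta-function integrals
\[
\int_0^t s^{(j-1)/2}\,ds = \frac{2\,t^{(j+1)/2}}{j+1}, \qquad \int_0^t \frac{s^{(j-1)/2}}{\sqrt{t-s}}\,ds = t^{j/2}\,\frac{d_{j+1}}{d_j},
\]
where the second identity follows from $B\bigl(\tfrac{j+1}{2},\tfrac12\bigr) = \sqrt{\pi}\,\Gamma\bigl(\tfrac{j+1}{2}\bigr)/\Gamma\bigl(\tfrac{j+2}{2}\bigr)$ and the definition of $d_j$.

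For the base case $j=1$, to prove \eqref{Aestj} I bound $|h(z)\,z\,\pa_z g(z,s)| \leq M\,\|h\|_\infty\,\sqrt{L/s}$ on $\supp h \subset [0,L]$ using the hypothesis on $\pa_x g$; then integrating against $k^b_{t-s}(x,\cdot)$ (which has unit mass) and then in $s$ gives $2M\sqrt{L}\,\|h\|_\infty\sqrt{t}$, matching the right-hand side since $d_0 = 1$. For \eqref{dAestj}, let $f_s(z) := h(z)\,z\,\pa_z g(z,s)$ and apply Lemma~\ref{lem2} to the inner $z$-integral, obtaining the pointwise bound $C_b\|f_s\|_\infty/[\sqrt{t-s}\,(\sqrt{t-s}+\sqrt{x})]$. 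Using $\|f_s\|_\infty \leq M\sqrt{L}\,\|h\|_\infty\,s^{-1/2}$ and dropping the $\sqrt{t-s}$ summand in the last factor, the remaining $s$-integration is $\int_0^t ds/\sqrt{s(t-s)} = \pi = d_1$.

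For the inductive step, assume \eqref{dAestj} at level $j$, so that $g_j := (A_t^b)^j g$ satisfies $|\pa_z g_j(z,s)| \leq d_j M (C_b\sqrt{L}\|h\|_\infty)^j\, s^{(j-1)/2}/\sqrt{z}$. The argument for \eqref{Aestj} at level $j+1$ is identical to the base case but with $s^{-1/2}$ replaced by $s^{(j-1)/2}$, and the first Beta integral produces the factor $2/(j+1)$; tallying constants yields exactly $\tfrac{2 d_j}{j+1}\,M\,C_b^{\,j}(\sqrt{L}\|h\|_\infty)^{j+1}\,t^{(j+1)/2}$, where the correct power of $C_b$ arises because $(C_b\sqrt{L}\|h\|_\infty)^j \cdot \sqrt{L}\|h\|_\infty = C_b^{\,j}(\sqrt{L}\|h\|_\infty)^{j+1}$. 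For \eqref{dAestj} at level $j+1$, Lemma~\ref{lem2} applied to $f_s(z) = h(z)\,z\,\pa_z g_j(z,s)$ gives
\[
|\pa_x (A_t^b)^{j+1} g(x)| \leq C_b\,d_j M(C_b\sqrt{L}\|h\|_\infty)^j\,\sqrt{L}\,\|h\|_\infty \int_0^t \frac{s^{(j-1)/2}\,ds}{\sqrt{t-s}\bigl(\sqrt{t-s}+\sqrt{x}\bigr)},
\]
and bounding the denominator below by $\sqrt{x}\sqrt{t-s}$, the remaining integral equals $t^{j/2}\,d_{j+1}/d_j$ by the second Beta identity, producing the constant $d_{j+1}$ and exactly $j+1$ factors of $C_b$ in the final bound.

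The main obstacle is really one of careful bookkeeping rather than any analytic subtlety: one must ensure that each application of Lemma~\ref{lem2} contributes exactly one factor of $C_b$, and recognize that the ratio $d_{j+1}/d_j$ emerges naturally from the $s$-integration—this is precisely what causes the sequence of constants $\{d_j\}$ defined in the statement of the proposition to appear in the correct positions.
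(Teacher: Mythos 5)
Your proof is correct and follows essentially the same route as the paper's: an induction in which the derivative bound \eqref{dAestj} at one level feeds the hypothesis for the next application of $A^b_t$, with the sup bound coming from the unit mass of $k^b_{t-s}$ plus a Beta integral, and the derivative bound coming from Lemma~\ref{lem2} plus the integral $\int_0^t s^{(j-1)/2}(t-s)^{-1/2}\,ds$. The only cosmetic difference is that the paper phrases the sup estimate via the maximum principle and records the recursion $d_j=\sqrt{\pi}\,d_{j-1}\Gamma(j/2)/\Gamma((j+1)/2)$ rather than verifying the closed-form ratio directly.
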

\begin{remark}\label{b0Atrmrk} The hypotheses of this proposition imply that
\[
|yh(y)\pa_yg(y,s)|\leq\frac{\sqrt{y}\|h\|_{\infty}M}{\sqrt{s}},
\]
so we could replace $k^0_{t-s}$ by $k^{0,D}_{t-s}$ in the definition of $A^0_t$. With this
choice, $A^0_tg(0)=0$. However, $k^{0,D}_t$ does not satisfy Corollary~\ref{cor44},
and its use would also complicate the derivation of the estimate for $\pa_x^jA^0_tg(x)$,
so it is simpler to use the kernel $k^0_{t-s}$ in the definition of $A^0_t$.
\end{remark}

For the higher norm estimates for $A^b_t$ and its iterates, it is useful to introduce,
for $T>0$ and $\ell \in\bbN,$ the norms
\[
\|g\|_{\cC^{\ell,\infty}[0,T]}=\max_{0\leq t\leq T}\|g(\cdot,t)\|_{\cC^\ell([0,\infty))}
\]
The maximum principle and \eqref{leibfrm0} immediately give the
\begin{lemma}\label{higdest} 
If $g\in\cC^{\ell}_b([0,\infty)\times[0,T]),$ and $1\leq p\leq \ell-1,$ $j\in\bbN,$ then
\[
|\pa_x^p(A^b_t)^jg(x)|\leq \frac{t^j}{j!}[2^p\|xh\|_{\cC^{p}}]^j \|g\|_{\cC^{p+1,\infty}[0,T]}.
\]
\end{lemma}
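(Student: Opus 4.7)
The plan is to induct on $j$, with each inductive step built from the Leibniz-style identity \eqref{leibfrm0} together with the mass-preservation $\int_0^\infty k^{b+p}_{t-s}(x,z)\,dz=1$ (the maximum-principle consequence applied to the constant function $1$).

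For the base case $j=1$, I would rewrite \eqref{leibfrm0} with $\ell=p$ in the consolidated form
\[
\pa_x^p A^b_t g(x)=\sum_{i=0}^p\binom{p}{i}\int_0^t\!\int_0^\infty k^{b+p}_{t-s}(x,z)\,\pa_z^i(zh(z))\,\pa_z^{p+1-i}g(z,s)\,dz\,ds,
\]
take absolute values, and use the uniform bounds $\|\pa_z^i(zh)\|_\infty\le\|xh\|_{\cC^p}$ and $\|\pa_z^{p+1-i}g(\cdot,s)\|_\infty\le\|g\|_{\cC^{p+1,\infty}[0,T]}$, which are legitimate for every $i\in\{0,\dots,p\}$ because $p+1\le\ell$. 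Integrating against the probability kernel in $z$ and then over $s\in[0,t]$ gives a factor $t$, while $\sum_{i=0}^p\binom{p}{i}=2^p$ provides the combinatorial factor, producing the $j=1$ estimate $t\cdot 2^p\|xh\|_{\cC^p}\|g\|_{\cC^{p+1,\infty}[0,T]}$.

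For the inductive step I would write $(A^b_t)^{j+1}g=A^b_t\bigl[(A^b_t)^jg\bigr]$ and reapply the same Leibniz expansion with $\tilde g(\cdot,s)=(A^b_s)^jg$ in the role of $g$. The inner factors $\pa_z^{p+1-i}\tilde g(z,s)$ are handled by the inductive hypothesis at the relevant index (so long as $p+1-i$ sits in the allowed range of derivatives), which supplies scaling $s^j/j!$ together with the constant $(2^p\|xh\|_{\cC^p})^j\|g\|_{\cC^{p+1,\infty}[0,T]}$; integrating $s^j/j!$ on $[0,t]$ produces $t^{j+1}/(j+1)!$, and the outer factor $\sum_i\binom{p}{i}\|\pa_z^i(zh)\|_\infty\le 2^p\|xh\|_{\cC^p}$ upgrades $(2^p\|xh\|_{\cC^p})^j$ to $(2^p\|xh\|_{\cC^p})^{j+1}$, as required.

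The main subtlety I would have to handle carefully is the bookkeeping for the boundary index $i=0$, where the raw Leibniz expansion produces a factor $\pa_z^{p+1}\tilde g$ requiring one more derivative than the inductive hypothesis directly supplies; the point is that under the standing assumption $g\in\cC^\ell$ the requisite regularity of $\tilde g$ is present, and the uniform control $\|\pa_z^i(zh)\|_\infty\le\|xh\|_{\cC^p}$ together with the dominance of all the nested derivative norms by the single quantity $\|g\|_{\cC^{p+1,\infty}[0,T]}$ allows every branch of the expansion to be bundled into the clean closed form $(2^p\|xh\|_{\cC^p})^{j+1}$. Once this uniform bookkeeping is in place the induction closes immediately, as the note preceding the lemma suggests.
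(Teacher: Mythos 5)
Your base case is sound and is faithful to the paper's (one-line) indication of proof: \eqref{leibfrm0} at order $p$, the bounds $\|\pa_z^i(zh)\|_\infty\le\|xh\|_{\cC^p}$ and $\|\pa_z^{p+1-i}g(\cdot,s)\|_\infty\le\|g\|_{\cC^{p+1,\infty}[0,T]}$ (legitimate since $p+1\le\ell$), and the unit mass $\int_0^\infty k^{b+p}_{t-s}(x,z)\,dz=1$. The genuine gap is in the inductive step, exactly at the $i=0$ branch that you set aside as bookkeeping. That branch produces $\pa_z^{p+1}\bigl[(A^b_s)^jg\bigr]$. If $p=\ell-1$, which the lemma allows, the inductive hypothesis says nothing about derivatives of order $p+1=\ell$ of the iterates, and the norm it would call for, $\|g\|_{\cC^{p+2,\infty}}=\|g\|_{\cC^{\ell+1,\infty}}$, need not be finite for $g\in\cC^{\ell}_b$. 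Nor can the missing bound be generated from \eqref{leibfrm0} itself: applied at order $\ell$ it consumes $\ell+1$ derivatives of its argument, so unwinding your recursion along the $i=0$ branch costs one extra derivative of $g$ per iteration and after $j$ steps would need $p+j$ derivatives; the regularity is \emph{not} ``present'' under the standing hypothesis $g\in\cC^{\ell}_b$. In the paper's framework the top-order derivative of an iterate is controlled only through the weighted gradient estimate of Lemma~\ref{lem2}, which produces a $1/\sqrt{x}$ blow-up and half-integer powers of $t$ — this is precisely what Proposition~\ref{atbests2} is built to handle — and that is not a sup-norm/maximum-principle statement, so it cannot be absorbed into your clean $t^j/j!$ recursion without changing the form of the estimate.

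Even in the range $p\le\ell-2$, where the inductive hypothesis does formally apply at order $p+1$, the induction does not close with the stated constants: the hypothesis at order $p+1$ returns $[2^{p+1}\|xh\|_{\cC^{p+1}}]^j\|g\|_{\cC^{p+2,\infty}}$, which is not dominated by $[2^{p}\|xh\|_{\cC^{p}}]^j\|g\|_{\cC^{p+1,\infty}}$, and iterating this loss pushes the derivative index up by one at every step. So ``the dominance of all the nested derivative norms by the single quantity $\|g\|_{\cC^{p+1,\infty}[0,T]}$'' is exactly the assertion that requires proof, and it is where the argument fails. To repair the step, the $i=0$ term must be treated by a different mechanism — for instance, an integration by parts in $z$ moving the extra derivative onto $zh(z)$ and the kernel, or an appeal to Lemma~\ref{lem2} — and one must then track the resulting change in the time powers and constants, rather than feeding the term back through the induction hypothesis. (The paper itself gives only the assertion that the lemma follows from the maximum principle and \eqref{leibfrm0}; your $j=1$ computation captures that, but the passage to general $j$ is not the immediate bookkeeping your sketch suggests.)
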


\begin{proposition}\label{atbests2} 
Define the constants $D_j$ inductively by 
\[
D_{j}=2D_{j-1}\frac{\Gamma(\frac{j}{2})}{\Gamma(\frac{j+1}{2})},
\]
and $D_0 = 1$, so that $D_j\leq C\frac{2^j}{\Gamma(\frac{j+1}{2})}$ for all $j \geq 0$. 
Let $h$ be any smooth function with $\supp h\subset [0,L]$ and suppose that the function
\[
g\in\cC^{\ell}([0,\infty)\times [0,T])\cap\cC^{\ell+1}((0,\infty)\times [0,T]),
\]
satisfies 
\begin{equation}\label{dgestl}
|\pa^{\ell +1}_xg(x,t)|\leq\frac{M}{\sqrt{xt}}
\end{equation}
for some constant $M > 0$ (depending on $g$).  Then there are constants $C_{T,L,\ell,b}, C'_{T,L,\ell,b}$ so that, 
for $j\in\bbN,$ we have
\begin{equation}\label{Aestjl}
|\pa^{\ell}_x[A^b_t]^jg(x)|\leq \frac{2D_{j-1}}{j}(M+\|g\|_{\cC^{\ell,\infty}[0,T]}) (C'_{T,L,\ell,b}\|h\|_{\cC^\ell})^jt^{\frac {j}2},
\end{equation}
and
\begin{equation}\label{dAestjl}
|\pa^{\ell+1}_x[A^b_t]^jg(x)|\leq \frac{D_{j}(M+\|g\|_{\cC^{\ell,\infty}[0,T]})(C_{T,L,\ell,b}\|h\|_{\cC^{\ell+1}})^jt^{\frac {j-1}2}}{\sqrt{x}}.
\end{equation}
\end{proposition}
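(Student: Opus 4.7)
The plan is to induct on $j$, establishing the coupled pair \eqref{Aestjl} and \eqref{dAestjl} simultaneously at each step. Proposition~\ref{prop1515} already handles the case $\ell=0$, so the role of \eqref{leibfrm0} is to reduce the general $\ell$ case to its $\ell=0$ analogue plus remainder terms whose derivatives of $g$ are of order $\le \ell$ (hence controlled by $\|g\|_{\cC^{\ell,\infty}[0,T]}$). The twin estimates must be proved together because, as in Proposition~\ref{prop1515}, the $1/\sqrt{x}$ blow-up in \eqref{dAestjl} is precisely what allows the next iteration of $A_t^b$ to be estimated via Lemma~\ref{lem2}.

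For the base case $j=1$ with $\ell\ge 1$, I would apply \eqref{leibfrm0} to write
$$
\pa_x^\ell A_t^b g = A_t^{b+\ell}\pa_y^\ell g + R_\ell,
$$
where $R_\ell$ is the sum, over $1\le i\le \ell$, of double integrals against $k^{b+\ell}_{t-s}(x,z)\,\pa_z^{i}(zh(z))\,\pa_z^{\ell+1-i}g(z,s)$. Since $i\ge 1$ the derivative of $g$ in each $R_\ell$-term is of order at most $\ell$, hence bounded in sup norm by $\|g\|_{\cC^{\ell,\infty}[0,T]}$; the factors $\pa_z^i(zh(z))$ are compactly supported in $[0,L]$ and controlled by $\|h\|_{\cC^\ell}$. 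The leading term $A_t^{b+\ell}\pa_y^\ell g$ is then treated exactly as in Proposition~\ref{prop1515} with $b$ replaced by $b+\ell$ and $g$ replaced by $\pa_y^\ell g$: \eqref{dgestl} supplies the required $M/\sqrt{xt}$ bound for $\pa_y^{\ell+1}g$, the maximum principle applied to $k^{b+\ell}$ gives \eqref{Aestjl}, and Lemma~\ref{lem2} applied to the $\cC^{\ell+1}$-case produces the $1/\sqrt{x}$ blow-up of \eqref{dAestjl}. Differentiating \eqref{leibfrm0} once more in $x$ and repeating this estimate for $\pa_x^{\ell+1} A_t^b g$ finishes the base case.

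For the inductive step, write $[A_t^b]^{j+1}g = A_t^b\bigl([A_t^b]^j g\bigr)$ and apply the $j=1$ bounds to $G_j := [A_t^b]^j g$. The inductive hypothesis furnishes $G_j\in \cC^\ell([0,\infty)\times[0,T])\cap \cC^{\ell+1}((0,\infty)\times(0,T])$ together with an $M/\sqrt{xt}$-style blow-up for $\pa_x^{\ell+1}G_j$ with effective constant governed by $D_{j-1}$. Substituting these into the base-case estimates and evaluating the time-convolution via the Beta integral
$$
\int_0^t s^{(j-1)/2}(t-s)^{-1/2}\,ds = t^{j/2}\,B\!\left(\tfrac{1}{2},\tfrac{j+1}{2}\right) = t^{j/2}\,\frac{\sqrt{\pi}\,\Gamma\!\left(\tfrac{j+1}{2}\right)}{\Gamma\!\left(\tfrac{j}{2}+1\right)}
$$
reproduces precisely the recursion $D_{j+1}=2D_j\,\Gamma((j+1)/2)/\Gamma((j+2)/2)$ and yields \eqref{Aestjl}--\eqref{dAestjl} at level $j+1$.

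\textbf{Main obstacle.} The principal difficulty is bookkeeping: I must track separately the powers $\|h\|_{\cC^\ell}^j$ and $\|h\|_{\cC^{\ell+1}}^j$ as the Leibniz remainder $R_\ell$ mixes the two, and simultaneously confirm that the sup-norm contribution $\|g\|_{\cC^{\ell,\infty}[0,T]}$ arising from $R_\ell$ is absorbed cleanly into the factor $(M+\|g\|_{\cC^{\ell,\infty}[0,T]})$ without spoiling the $\Gamma$-function cancellation that produces $D_j$. A secondary technical point is verifying that $G_j=[A_t^b]^j g$ actually satisfies the integrability and regularity needed to legitimately feed back into the $j=1$ estimate at the next stage; this in turn is why the derivative bound \eqref{dAestjl} must be proved in tandem with \eqref{Aestjl} rather than afterwards.
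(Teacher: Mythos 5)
Your plan is essentially the paper's proof: induct on $j$ with the two estimates proved in tandem, use the Leibniz reduction \eqref{leibfrm0} to isolate the top-order term, control it via the maximum principle and Lemma~\ref{lem2}, and let the Beta-type time convolution $\int_0^t s^{(j-2)/2}(t-s)^{-1/2}\,ds$ generate the $\Gamma$-ratio recursion defining $D_j$. The bookkeeping you flag as the main obstacle is exactly what the paper carries out: the Leibniz terms with $2\le p\le \ell$ are bounded crudely via Lemma~\ref{higdest}, the $p=1$ term via the inductive \eqref{Aestjl}, and both are absorbed into the stated constants thanks to the elementary comparison $D_{j-1}\ge\max\bigl\{1/j!,\,2D_{j-2}/(j-1)\bigr\}$, i.e.\ \eqref{dlwrbnd}.
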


\subsection*{Off-diagonal estimates for $(A_t^b)^j$}
The final set of estimates we derive for the operators $A_t^b$ involve the
off-diagonal behavior of the Schwartz kernel of the infinite sum
\begin{multline*}
\sum_{j=0}^\infty (A_t^b)^j k_t^b (x,y) = \sum_{j=0}^\infty \int_0^t \cdots \int_0^{s_1} \int_0^\infty \cdots \int_0^\infty
A_{t-s_j}^b(x,z_1) \times \\  A_{s_j-s_{j-1}}^b(z_1,z_2) \ldots  A_{s_2 - s_1}^b(z_{j-1},z_j) k_{s_1}^b(z_j,y)\, ds_1 \ldots ds_j dz_1 \ldots dz_j.
\end{multline*}
The most precise bounds are best described in a blown-up space. For our
purposes, the slightly cruder estimates derived here suffice.

We begin by observing that if $|x-y| \geq \alpha > 0$, then $k_t^b(x,y) \leq C
y^{b-1}e^{-c/t}$ (where $c$ is any number less than $(\sqrt{x} -
\sqrt{y})^2$). In fact, $k_t^b(x,y) = y^{b-1}F(t,x,y)$ where $F$ is smooth away
from the diagonal $x=y$ but up to $x=0$ and $y=0$, and satisfies $|F| \leq C
e^{-c/t}$ where $c$ depends only on $|x-y|$. This is proved using the explicit
expression for $k_t^b$, and separately considering the behavior in the regions
$xy > t^2$ and $xy < t^2$.  Using this we can now prove the
\begin{proposition}
  Fix $\alpha \geq 0$, and $k, \ell \in {\mathbb N}$; then there exist
  constants $C, c, B > 0$ depending on $\alpha$, $||h||_\infty$, $L$, $k$ and
  $\ell$ such that if $|x-y| \geq \alpha$ and $0 \leq x, y \leq L$, then
\[
|\pa_x^k (y\pa_y)^\ell [(A_t^b)^j k_t^b](x,y)| \leq C D_j B^j e^{-c/t} y^{b-1},
\]
where $D_j$ are the constants appearing in Proposition \ref{atbests2}. (These
may be replaced by the constants $d_j$ from Proposition \ref{prop1515} when $k
= \ell = 0$.)
\end{proposition}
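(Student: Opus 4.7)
The approach combines a Gaussian-type pointwise bound on the model kernel $k^b_t$ with the on-diagonal mapping estimates of Propositions~\ref{prop1515} and~\ref{atbests2}, using Cauchy--Schwarz on the product of Gaussian factors appearing in the iterated integral. Starting from the explicit formula
\[
k^b_t(x,y) = \left(\tfrac{y}{t}\right)^b e^{-(x+y)/t}\psi_b\!\left(\tfrac{xy}{t^2}\right)\tfrac{1}{y}
\]
together with the asymptotic expansion~\eqref{psiexp1} for $xy \geq t^2$ and the trivial bound $|\psi_b(z)| \leq \sum z^j/(j!\Gamma(j+b))$ for $xy < t^2$, and exploiting the identity $(x+y)/t - 2\sqrt{xy}/t = (\sqrt x - \sqrt y)^2/t$, I would first prove a Gaussian pointwise estimate
\[
|k^b_t(x,y)| \leq C y^{b-1} t^{-1}\bigl(1 + \tfrac{xy}{t^2}\bigr)^{(1-2b)/4} e^{-(\sqrt x-\sqrt y)^2/t},
\]
together with analogous estimates for $\pa_x^k(y\pa_y)^\ell k^b_t$ that pick up additional powers of $t^{-1}$ and polynomial factors. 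Since $x,y \in [0,L]$ with $|x-y|\geq\alpha$ forces $|\sqrt x-\sqrt y|\geq \alpha/(2\sqrt L)$, this immediately handles the base case $j = 0$.

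For $j \geq 1$, expand
\[
(A_t^b)^j k_t^b(x,y) = \int_{0<s_1<\cdots<s_j<t}\!\int \Bigl[\prod_{i=0}^{j} k^b_{s_i}(u_i,u_{i+1})\Bigr]\prod_{i=1}^{j} h(u_i) u_i \pa_{u_i}(\cdots)\,du\,ds,
\]
with $u_0 = x$, $u_{j+1} = y$, $\sum s_i = t$, and the derivatives in the second product arising from formal integration by parts in the $A^b$ factors. Each $k^b_{s_i}$ carries the Gaussian $e^{-(\sqrt{u_i}-\sqrt{u_{i+1}})^2/s_i}$ from the first step. Split each such Gaussian into two halves; the product of the second halves is bounded via Cauchy--Schwarz, with the choice $a_i = (\sqrt{u_i}-\sqrt{u_{i+1}})/\sqrt{s_i}$, $b_i = \sqrt{s_i}$, using $\sum a_ib_i = \sqrt x - \sqrt y$ (telescoping) and $\sum b_i^2 = t$, giving
\[
\exp\!\Bigl(-\tfrac{1}{2}\sum_{i=0}^j \tfrac{(\sqrt{u_i}-\sqrt{u_{i+1}})^2}{s_i}\Bigr) \leq e^{-(\sqrt x-\sqrt y)^2/(2t)} \leq e^{-c/t},
\]
with $c = \alpha^2/(8L)$, uniformly in $j$ and in the intermediate variables. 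The remaining integrand has exactly the structure analyzed in the proofs of Propositions~\ref{prop1515} and~\ref{atbests2}, but with halved-width kernels $\tilde k^b_{s_i}(u_i,u_{i+1})$ (of the same qualitative type, with $s_i$ replaced by $2s_i$ in the variance), so the same arguments furnish the $D_j B^j$ growth (respectively $d_j B^j$ when $k = \ell = 0$) with an adjusted constant $B$; the factor $y^{b-1}$ is extracted from the final kernel $k^b_{s_1}(z_j,y)$.

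The derivative cases follow by reduction. The $x$-derivatives $\pa_x^k$ are distributed via the Leibniz formula~\eqref{leibfrm0}, producing a finite sum of terms in which the outer kernel has index $b+k$ and extra derivatives land on $z h(z)$; each term is handled by the same scheme. The $y$-derivatives $(y\pa_y)^\ell$ differentiate under the integral sign in the last factor $k^b_{s_1}(z_j,y)$, invoking the derivative bounds from the first step. The \textbf{main obstacle} is the combinatorial verification that the halved-kernel iterated integrals genuinely reproduce the constants $D_j$ (or $d_j$) of Propositions~\ref{prop1515} and~\ref{atbests2}, rather than incurring a worse growth in $j$; this requires revisiting the arguments in Appendix~\ref{pertthryprfs} to confirm that halving the variance only changes the absolute constant $B$. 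A secondary subtlety is tracking the singular factor $y^{b-1}$ when $b < 1$ through the derivative reductions and ensuring the polynomial prefactors $(1+xy/s_i^2)^{(1-2b)/4}$ do not compromise uniformity of $B$ over $b$ in a bounded range.
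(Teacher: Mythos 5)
Your base case and the telescoping Cauchy--Schwarz step are fine: splitting each Gaussian in half and using $\sum_i(\sqrt{u_i}-\sqrt{u_{i+1}})=\sqrt{x}-\sqrt{y}$ with weights $\sqrt{s_i}$ does yield a uniform factor $e^{-(\sqrt{x}-\sqrt{y})^2/2t}\leq e^{-c/t}$ over the whole chain. The genuine gap is the step you yourself flag as the ``main obstacle'' and then assume away: the assertion that the remaining chain, built from halved-variance kernels, ``has exactly the structure analyzed in the proofs of Propositions~\ref{prop1515} and~\ref{atbests2}.'' Those proofs are not estimates on an abstract chain of Gaussian-like kernels; they are operator-level inductions that use, at every step, properties special to the exact heat kernels $k^b_t$: positivity together with $\int_0^\infty k^b_t(x,y)\,dy=1$ and the maximum principle (the function $v(x,t;s)=\int k^b_{t-s}(x,y)yh(y)\pa_y g(y,s)\,dy$ is bounded by its initial data because it solves $(\pa_t-L_b)v=0$), and the integrated derivative bound of Lemma~\ref{lem2}, whose constant comes from the precise $\psi_b$ asymptotics. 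Once half of each Gaussian is discarded, the resulting kernels no longer solve the model heat equation, no longer have unit mass, and Lemma~\ref{lem2} does not apply to them, so ``the same arguments'' cannot simply be invoked. Closing your argument requires reproving pointwise and $L^1$ analogues of Lemma~\ref{lem2} for the modified kernels and their $z$-derivatives (tracking the $z^{-1/2}$-type singularities and the prefactors $(1+u_iu_{i+1}/s_i^2)^{(1-2b)/4}$ through the $j$-fold space-time integrals), and then redoing the inductions of Appendix~\ref{pertthryprfs} to verify that the Beta-function time integrals still produce constants of size $B^jD_j$ (resp.\ $d_j$). That verification is the actual content of the proposition, and it is missing.

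It is worth contrasting this with the paper's route, which avoids the chaining entirely and hence any re-derivation of the appendix estimates. There one writes $k^b_t=k^{1,b}_t+k^{2,b}_t$ with a cutoff near the diagonal; the off-diagonal piece already carries the bound $Ce^{-c/s}y^{b-1}\leq Ce^{-c/t}y^{b-1}$, so Propositions~\ref{prop1515} and~\ref{atbests2} apply to it verbatim. For the near-diagonal piece, a single composition $A^b_t\circ k^{1,b}_t$ is estimated by hand, splitting the time integral at $t/2$ and integrating by parts in $z$ on $[0,t/2]$: since $z$ lies in the support of $k^{1,b}_s(\cdot,y)$, i.e.\ within $\alpha/2$ of $y$, while $|x-y|\geq\alpha$, the single factor $k^b_{t-s}(x,z)$ is forced off-diagonal and supplies the $e^{-c/t}$; the remaining $j$ iterates are then fed into the unmodified propositions. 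In other words, the exponential gain need only be harvested from one kernel factor, not from the entire chain --- which is exactly the simplification that lets the existing on-diagonal machinery be reused without change. If you wish to keep your approach, you must either carry out the modified-kernel estimates explicitly or restructure the argument so that, as in the paper, only one factor is estimated off-diagonal.
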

\begin{proof}
  As just indicated, this assertion is clear from the formula for $k_t^b$ when
  $j=0$, so we proceed by induction, assuming that we have proved it for all
  powers up to some $j$.

  First use a smooth partition of unity to decompose $k_t^b = k_t^{1,b} +
  k_t^{2,b}$ where $k_t^{1,b}$ is supported in the region $|x-y| \leq \alpha/2$
  and $y^{1-b} k_t^{2,b}$ is $\calC^\infty$ when $x,y,t \geq 0$ and satisfies
  $|y^{1-b}k_t^{2,b}(x,y)| \leq C e^{-c/t}$ for all $x,y$. Using Proposition
  \ref{prop1515} (or Proposition \ref{atbests2} for the higher derivatives), we
  have that
\[
|(A_t^b)^{j+1} k_t^{2,b}(x,y)| \leq C d_j B^{j+1} e^{-c/t}y^{b-1}.
\]

To estimate the other term, let us begin by writing
\[
\begin{split}
A_t^b \circ k_t^{1,b}(x,y) & = \int_0^t \int_0^\infty k_{t-s}^b(x,z) h(z) z\pa_z k_s^{1,b}(z,y)\, dz ds \\
& = \int_0^{t/2} \int_0^\infty -(z\pa_z + 1) (k_{t-s}^b(x,z)h(z)) k_s^{1,b}(z,y)\, dz ds  \\ 
& + \int_{t/2}^t \int_0^\infty k_{t-s}^b(x,z)h(z) z\pa_z k_s^{1,b}(z,y)\, dz ds.
\end{split}
\]
The integration by parts used to obtain the first term on the right is valid
because $z k_{t-s}^b(x,z)k_s^{1,b}(z,y) \leq Cz^b$ and $h(z)$ has compact
support. For this first term, use that $|x-z| \geq \alpha/2$ and $|z-y| \leq
\alpha/2$ to get
\[
|(z \pa_z + 1) k_s^b(x,z)| \leq C e^{-c/t}z^{b-1}, \quad \mbox{and} \quad |k_s^{1,b}(z,y)| \leq C s^{-1/2} y^{b-1},
\]
which shows that this term is bounded by $B_1 e^{-c/t} y^{b-1}$.  
The second term is bounded similarly, using 
\[
|\pa_z k^{1,b}_s(z,y)| \leq B_2 s^{-3/2}y^{b-1}.
\]  
Taken together, we obtain that
\[
|A^b_t k_t^b(x,y)| \leq B e^{-c/t} y^{b-1}
\]
where $B$ depends only on the quantities indicated.  Finally, use Propositions \ref{prop1515} and \ref{atbests2} to obtain
\[
|(A^b_t)^j \circ (A^b_t k^{1,b}_t)| \leq C d_j B^{j+1} e^{-c/t} y^{b-1},
\]
as claimed. The estimates for the higher derivatives are proved similarly.
\end{proof}

From these estimates we now obtain the
\begin{corollary}
\label{kernest1}
The kernel $\tq^{\,b}_t(x,y)$ of the operator
\[
f \mapsto \tQ_t^bf=\sum_{j=0}^\infty(A_t^b)^j\int\limits_0^{\infty}k_t^{b}(\cdot,y)f(y)dy
\]
can be written as
\[
\tq^{\, b}_t=y^{b-1}\tq^{\,b,\reg}_t(x,y),\text{ where }  \tq^{\,b,\reg}_t\in\CI([0,1]\times[0,1]\times (0,\infty)).
\]
For each $d>0,$ there is a constant $c_d>0,$ so that in the off-diagonal region $\{(x,y):\: |x-y|>d\},$ we have the estimate
\begin{equation}\label{offdiag2}
|\tq^{\, b}_t(x,y)|\leq C e^{-\frac{c_d}{t}} y^{b-1},
\end{equation}
with analogous estimates for the derivatives $\pa_x^k(y\pa_y)^l\tq^{\, b}_t(x,y).$
\end{corollary}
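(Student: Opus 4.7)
The plan is to combine the explicit structural form of $k_t^b$ with the per-term bounds from the preceding proposition and from Propositions~\ref{prop1515} and~\ref{atbests2}, then sum the resulting Neumann series term-by-term.

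First I extract the boundary factor from $k_t^b$ itself: from~\eqref{psifrm1},
$$k_t^b(x,y) = y^{b-1}\,t^{-b}e^{-(x+y)/t}\psi_b(xy/t^2) = y^{b-1}K^b(t,x,y),$$
with $K^b\in\CI([0,\infty)\times[0,\infty)\times(0,\infty))$ since $\psi_b$ is entire. Because $A_t^b$ acts only in the $x$-, $z$- and $t$-variables (the $y$-variable is inert), the factor $y^{b-1}$ passes through every iterate, so
$(A_t^b)^j k_t^b(x,y) = y^{b-1}F_j^b(t,x,y)$
for a function $F_j^b$ built from smooth operations on smooth objects, hence smooth on $[0,L]\times[0,L]\times(0,\infty)$.

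Second, I control the $\cC^\ell$-norms of $F_j^b$. To obtain bounds valid on the diagonal, I apply Proposition~\ref{atbests2} to the function $g(z,s)=K^b(s,z,y)$ with $y>0$ a fixed parameter; this $g$ is smooth in $(z,s)$, and the asymptotic expansion~\eqref{psiexp1} of $\psi_b$, together with the Gaussian factor $e^{-(z+y)/s}$, allows one to verify the blowup condition $|\pa_z^{\ell+1}g(z,s)|\leq M/\sqrt{zs}$ with $M$ locally uniform in $y$. This yields, for each $k,\ell\in\bbN$, uniform bounds of the form
$$|\pa_x^k\pa_y^\ell F_j^b(t,x,y)|\leq C_{k,\ell,t_0,T}\, D_j B_{k,\ell}^j\, t^{j/2}$$
on $[0,1]\times[0,1]\times[t_0,T]$ for any $0<t_0<T$, where $D_j$ are the constants from Proposition~\ref{atbests2}.

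Since $D_j\leq C\cdot 2^j/\Gamma((j+1)/2)$, each of the series $\sum_j F_j^b$ and all of its partial derivatives converge absolutely and uniformly on compact subsets of $[0,1]\times[0,1]\times(0,\infty)$, so $\tq^{\,b,\reg}_t(x,y)=\sum_j F_j^b(t,x,y)$ lies in $\CI([0,1]\times[0,1]\times(0,\infty))$, giving the first assertion. The off-diagonal bound~\eqref{offdiag2} then follows by summing the per-term off-diagonal estimates from the preceding proposition; the analogous bounds on $\pa_x^k(y\pa_y)^\ell\tq^{\,b}_t$ follow in exactly the same manner.

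The main obstacle will be the second step, namely checking the blowup hypothesis of Proposition~\ref{atbests2} uniformly in $y\in[0,1]$. The quantity $\pa_z^{\ell+1}K^b(s,z,y)$ combines decaying exponential factors with derivatives of $\psi_b(zy/s^2)$, and the asymptotic of $\psi_b$ changes character between $zy\gtrsim s^2$ and $zy\lesssim s^2$; I anticipate splitting $[0,L]\times[0,L]$ into these two regimes, using~\eqref{psiexp1} in the former while bounding $\psi_b$ by its power series in the latter, and then combining the resulting estimates to produce the required uniform bound on $M$.
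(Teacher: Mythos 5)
Your first step (pulling out the factor $y^{b-1}$, since $y$ enters $(A^b_t)^jk^b_t$ only as an inert parameter) and your last step (summing the per-term bounds using $D_j\leq C2^j/\Gamma(\frac{j+1}{2})$, and quoting the preceding proposition for the off-diagonal bound \eqref{offdiag2}) are fine and agree with the paper. The genuine gap is in your second step: Proposition~\ref{atbests2} cannot be applied to $g(z,s)=K^b(s,z,y)=y^{1-b}k^b_s(z,y)$, because that function violates the hypotheses of the proposition rather than merely making them delicate to check. Using \eqref{psifrm1} and \eqref{psiexp1}, for $z,y$ comparable and bounded away from $0$ one has $K^b(s,z,y)\sim C s^{-1/2}e^{-(\sqrt z-\sqrt y)^2/s}$ as $s\to 0^+$, so $g$ is not even in $\cC^0([0,\infty)\times[0,T])$ (it blows up like $s^{-1/2}$ at $z=y$), and in the regime $|\sqrt z-\sqrt y|\lesssim\sqrt s$ each $z$-derivative costs another factor of order $s^{-1/2}$, so $|\pa_z^{\ell+1}g|\sim s^{-1-\frac{\ell+1}{2}}$ there. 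This is incompatible with the required bound $M/\sqrt{zs}$ for any choice of $M$, even one depending on $y$; already $\pa_z g$ fails it. The difficulty you flag at the end (uniformity in $y$, splitting $zy\gtrless s^2$) concerns the corner $z,y\to 0$ and is not the real obstruction; the obstruction is the short-time on-diagonal singularity of the heat kernel itself.

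This is precisely why the paper's route to the per-term bounds is different: it argues by induction on $j$, first splitting $k^b_t=k^{1,b}_t+k^{2,b}_t$ with a partition of unity into a near-diagonal piece and a piece with $y^{1-b}k^{2,b}_t$ smooth and $O(e^{-c/t})$; the smooth piece is fed into Propositions~\ref{prop1515} and~\ref{atbests2}, while for $A^b_t\circ k^{1,b}_t$ the $s$-integral is split at $t/2$, with an integration by parts in $z$ on $(0,t/2)$ (moving $z\pa_z$ onto the smooth, off-diagonal factor $k^b_{t-s}(x,z)h(z)$ so that the singular short-time factor $k^{1,b}_s(z,y)$ is never differentiated) and a direct estimate on $(t/2,t)$ where $s\geq t/2$ keeps $\pa_z k^{1,b}_s(z,y)$ under control. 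Only after this first application of $A^b_t$, which regularizes the kernel, do the general iterated estimates take over. To repair your argument you would need to insert this one-step regularization (or an equivalent near-/off-diagonal decomposition) before invoking Proposition~\ref{atbests2}; as written, the claimed bound $|\pa_x^k\pa_y^\ell F^b_j|\leq C D_jB^jt^{j/2}$ is not justified.
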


\section{Construction of the heat kernel for a general Wright-Fisher operator}
\label{htker4L}
After this long excursion into the analysis of the model problems we are now prepared to construct the heat kernel for 
the full generalized Wright-Fisher operator 
\[
L=y(1-y)\pa_y^2+b(y)\pa_y.
\]
Let
\[
y=\sin^2\sqrt{x_\ell}\text{ and }1-y=\sin^2\sqrt{x_r}.
\]
According to the discussion in Section~\ref{modprob1}, pulling back $L$ to these coordinate charts gives 
\[
L_\ell=x_l\pa_{x_\ell}^2+b_0\pa_{x_\ell}+x_\ell c_l(x_\ell)\pa_{x_\ell}\text{ and }  L_r=x_r\pa_{x_r}^2+b_1\pa_{x_r}+x_rc_r(x_r)\pa_{x_r},
\]
where
\[
b_0=b(0)\text{ and }b_1=-b(1).
\]

Suppose that $u$ solves~\eqref{genWFivp}, with $u(y,0)=f(y).$ Then on the interval $[0,\left(\frac{\pi}{2}\right)^2)$ the functions 
\[
u_\ell (x_\ell,t)=u(\sin^2\sqrt{x_\ell},t)\text{ and } u_r(x_r,t)=u(1-\sin^2\sqrt{x_r},t),
\]
satisfy
\[
\begin{split}
& (\pa_t-L_\ell)u_\ell=0\text{ with }u_\ell(x_\ell,0)=f(\sin^2\sqrt{x_\ell})\text{ and }\\
&(\pa_t-L_r)u_r=0\text{ with }u_r(x_r,0)=f(1-\sin^2\sqrt{x_r}).
\end{split}
\]
It is clear that the symmetry $y\to 1-y$ carries the left end to the right and vice-versa, so to simplify this discussion, we 
focus on the left end. We use $x$ to denote $x_\ell,$ $b$ to denote $b_0,$ and we choose a smooth cutoff function $\varphi$ so that
\[
\varphi(x)=
\begin{cases} 1\text{ for }x\in [0,\left(\frac{\pi}{2}\right)^2-2\eta]
\\ 0 \text{ for }x>\left(\frac{\pi}{2}\right)^2-\eta,
\end{cases}
\]
where $\eta>0$ is small. With $h(x)=c(x)\varphi(x),$ we now focus attention on
\[
\tL=L_{b_0}+xh(x)\pa_x.
\]
A solution to~\eqref{genWFivp}, pulled by via $x_\ell$ satisfies:
\[
(\pa_t-\tL)\tu=0\text{ on }[0,\left(\frac{\pi}{2}\right)^2-2\eta].
\]

To build a parametrix for the heat kernel that has the correct boundary
behavior near to $y=0,$ consider the initial value problem:
\begin{equation}\label{mdlprb2}
(\pa_t-\tL)\tu=0\text{ with }\tu(x,0)=\tf(x)\in\cC^0_c([0,\infty).
\end{equation}
For our application $\tf$ is obtained from $f$ by pullback and multiplication by
a smooth cut-off. Multiply the equation by $k^{b}_t(x,y)$  and  integrate to obtain:
\[
\tu(x,t)-\int\limits_0^t\int\limits_0^{\infty}k^{b}_{t-s}(x,y)yh(y)\pa_y\tu(y,s)dyds=
\int\limits_0^{\infty}k^{b}_t(x,y)\tf(y)\, dy,
\]
or equivalently
\[
\tu(x,t)-A^b_t\tu(y,s)= \int\limits_0^{\infty}k^{b}_t(x,y)\tf(y)\, dy.
\]
If $b=0,$ then as noted in Remark~\ref{b0Atrmrk}, we could replace $k^0_{t-s}$ in the definition of 
$A^0_t$ with $k^{0,D}_{t-s},$ which shows that, as expected, $\tu(0,t)=\tf(0)$ for all $t\geq 0.$

It is straightforward to solve \eqref{mdlprb2} using the estimates from Section~\ref{s.pests}. Indeed, 
the solution can be expressed as a convergent Neumann series:
\[
\tu(x,t)=(\Id-A^b_t)^{-1}\tg(x,t)=\sum_{j=0}^{\infty}(A^b_t)^j\tg, 
\]
where
\[
\tg(x,t)=\int\limits_0^{\infty}k^b_t(x,y)\tf(y)\, dy.
\]
Let us denote the operator $\tf\to(\Id-A^b_t)^{-1}\tg$ by $\tQ^b_t\tf.$  Later on we shall need to distinguish 
the operator constructed near $x=0$ from the one near $x=1$, and at that point we shall write them as 
$\tQ^{b_0}_{t,\ell}$ and $\tQ^{b_1}_{t,r}$, and denote their kernels by $\tq^{\, b_0}_{t,\ell}, \tq^{\,b_1}_{t,r}$, respectively. 

An immediate consequence of Propositions~\ref{prop1515}, and~\ref{atbests2} is
that if $f\in\cC^\ell_c([0,\infty)),$ then this sum converges uniformly in the
topology of $\cC^0([0,T];\cC^\ell([0,\infty)),$ for any $T>0$, and
Propositions~\ref{prop99},~\ref{prop1515}, and~\ref{atbests2} show that for
such data,
\[
\lim_{t\to 0^+} \pa_x^j\tu(\cdot,t)=\pa_x^j\tf\text{ for }j\leq l,
\]
where the convergence is with respect to the $\cC^0$ topology.

The regularity of these solutions shows that if $\tf\in\cdC^l([0,\infty)),$ then
\begin{equation}\label{gbreg}
\tg\in\cC^0([0,\infty);\cC^\ell([0,\infty))\cap \cC^{\infty}([0,\infty)\times (0,\infty)).
\end{equation}
As a consequence of Proposition~\ref{sptmders}, we see that for $0<\epsilon<t,$
the sum defining $\tQ_t\tf$ actually converges uniformly in the $\CI$-topology. Hence
\begin{equation}\label{gbreg1}
\tu\in\cC^0([0,\infty);\cC^\ell([0,\infty))\cap \cC^{\infty}([0,\infty)\times (0,\infty)),
\end{equation}
as well, and differentiating shows that
\[
(\pa_t-L_b)\tu=xh(x)\pa_x\tu \Longleftrightarrow   (\pa_t-\tL)\tu=0.
\]

We have now solved the problem `exactly' near each endpoint, and the next step
is to paste together these left and right solution operators to obtain an
infinite order parametrix for $\pa_t - L,$ which has remainder term vanishing
identically near the $x=0$ and $x=1$. To accomplish this, define maps
$\phi_\ell$ and $\phi_r$ near $x=0$ and $1$ which put the leading part of
$L_{\WF}$ into the model form $x\pa_x^2$. Pulling back the operator in
\eqref{WFnrmfrm2} gives exactly the operator $L$ in the intervals $[0,\frac
89],$ and $[\frac 19,0]$.  Now choose cutoffs
$\varphi_l,\varphi_r,\varphi_0\in\CI[0,1]$ so that
\[
\begin{split}
    &\varphi_l(x)=1\text{ for }x\in [0,\frac{11}{16}],\quad\supp\varphi_l\subset
    [0,\frac 34],\\
&\varphi_r(x)=1\text{ for }x\in [\frac{5}{16},1],\quad\supp\varphi_r\subset
    [\frac 14,1],\\
&\varphi_0(x)=1\text{ for }x\in [0,\frac 38],\quad\supp\varphi_0\subset
    [0,\frac 58].
\end{split}
\]
and define the parametrix 
\begin{multline*}
q_t(x,y)= \varphi_0(x)\tq^{\, b_0}_{t,l}(\phi_l(x),\phi_l(y))\varphi_l(y)|\phi'_l(y)| + \\
(1-\varphi_0(x))\tq^{\, b_1}_{t,r}(\phi_r(x),\phi_r(y))\varphi_r(y)|\phi'_r(y)|.
\end{multline*}
The heat kernel for $L$ is determined symbolically to infinite order as $t
\searrow 0$ near the diagonal away from $x,y=0$, or $1$. Furthermore, the kernel and
all its derivatives tend to zero like $e^{-c/t}$ away from the diagonal and
away from $y=0$ or $1.$ Hence in the overlap region,
$\supp\varphi_0\cap\supp(1-\varphi_0),$ the two terms agree to all orders.

Now set 
\[
e_t(x,y)=[\pa_t - (x(1-x)\pa_x^2+b(x)\pa_x)]q_t(x,y);
\]
by construction, this satisfies
\[
\supp e_t(x,y)\subset [\frac 38,\frac 58]\times [0,1]\times [0,\infty).
\]
Since this error term is obtained by applying derivatives in the first ($x$)
variable to $q_t(x,y)$, Corollary~\ref{kernest1} shows that
$y^{1-b_0}(1-y)^{1-b_1}e_t \in \calC^\infty([0,1]\times [0,1]\times(0,\infty))$
and vanishes, along with all its derivatives, like $e^{-c/t}$ for some $c>0$,
on $[0,1]\times [0,1] \times [0,\infty)$ disjoint from the diagonal at
$t=0$. In a neighborhood including the diagonal it vanishes, in the
$\CI$-topology, faster than $O(t^N)$ for any $N\in\bbN.$

Given a function $f\in\cC^m([0,1]),$  set
\[
u_0(x,t)=\int\limits_{0}^1q_t(x,y)f(y)\,dy.
\]
Propositions~\ref{prop1515} and~\ref{atbests2} show that
$u_0(x,t)\in\cC^0([0,\infty);\cC^m([0,1])),$ and
\[
[\pa_t- L]u_0(x,t)=v(x,t),\quad\lim_{t\to 0^+}u_0(x,t)=f(x),
\]
where $v(x,t)$ is smooth, vanishes in $\{[0,\frac 38]\cup[\frac 58,1]\}\times
[0,\infty)$ and tends rapidly to zero as $t\to 0^+.$ This is even true for
$\pa_x^j v(x,t)$ for all $j\in\bbN.$

We now complete the construction of the solution operator for the generalized
Wright-Fisher operator. Define, for $\epsilon>0$, 
\begin{equation}
\begin{split}
  Q_t^{\epsilon}g &=\int\limits_{0}^{t-\epsilon}\int\limits_0^1q_{t-s}(x,y)g(y,s)\, dyds\\
E_tg&=-\int\limits_{0}^{t}\int\limits_0^1e_{t-s}(x,y)g(y,s)\, dyds,
\end{split}
\end{equation}
and write $Q_t^0$ simply as $Q_t$. If 
\begin{equation}\label{greghyp2}
g\in\cC^0([0,\infty)\times [0,\infty))\cap\cC^2([0,\infty)\times (0,\infty)),
\end{equation}
then the estimates above imply that $Q_t^{\epsilon}g \to Q_tg$ in the $\cC^2$
topology for any $t > 0$, and hence
\[
(\pa_t-L)Q_tg=\lim_{\epsilon\to 0^+}(\pa_t-L)Q_t^{\epsilon}g=(\Id-E_t)g.
\]

The inverse of $(\Id-E_t)$ is an operator of the same type, and we write it as
$(\Id-H_t)$; here $H_t$ is represented by a kernel $h_t(x,y).$ Note that
\[
(\Id-E_t)(\Id-H_t)= (\Id - H_t)(\Id - E_t) = \Id,
\]
so
\[
H_t = -E_t + E_t H_t = -E_t + H_t E_t \Longrightarrow H_t = -E_t - E_t^2 + E_t H_t E_t.
\]
The last identity shows that $h_t$ same regularity properties as $e_t$.  In
particular, it vanishes, along with all derivatives, to infinite order as $t\to
0,$ and behaves like $y^{b_0-1}$ along $y=0,$ and $(1-y)^{b_1-1}$ along $y=1.$

Setting $g=(\Id-H_t)v,$ then $u_1=Q_tg$ satisfies
\[
(\pa_t-L)u_1=v\quad \text{ and }\qquad \lim_{t\to 0}u_1(x,t)=0.
\]
The true solution $u$ is the difference
\[
u=u_0-u_1=Q_t(\Id-E_t+H_tE_t)(f\otimes\delta(t)),
\]
or equivalently,
\begin{equation}\label{exctsoln}
u=\int\limits_0^1q_t(x,y)f(y)\, dy+\int\limits_0^t\int\limits_0^1q_{t-s}(x,z)
\int\limits_0^1h_s(z,y)f(y)\, dydzds.
\end{equation}
This was derived under the assumption that $g$ satisfies~\eqref{greghyp2}. If
$f\in\cC^0([0,1]),$ then the fact that the solution can be represented the same
way follows from the mapping properties of $Q_t.$

\begin{definition}
We let $\cQ_tf$ denote the operator on the right hand side
in~\eqref{exctsoln}, and $\hq_t(x,y)$ its kernel.
\end{definition}

Note that since $y^{1-b_0}(1-y)^{1-b_1}h_t$ decays rapidly decreasing as $t
\searrow 0$ in $\CI([0,1]\times[0,1])$, the kernel $q_t$ already defines a
complete parametrix for $(\pa_t-L)^{-1}$ away from $y=0$ and $y=1.$ The
contributions of $h_t$ along $y=0$ and $y=1$ are essential for $\cQ_t$ to
satisfy the adjoint boundary conditions~\eqref{eq:adjbc},
and hence to provide the solution operator for the adjoint problem. 

Putting together all the information we have obtained about the kernel for
$\cQ_t,$ and using the uniqueness for the regular solution of~\eqref{genWFivp},
we can now state the fundamental
\begin{theorem}\label{bscexstc} For each $m\in\bbN\cup \{0\}$ the operators $\cQ_t$ define 
  positivity preserving semi-groups on $\cC^m([0,1]).$ The function
  $u(x,t)=\cQ_tf(x)$ satisfies~\eqref{genWFivp}.  Moreover, for $f\in\cC^m$,
\[
\lim_{t\to 0^+}\|\cQ_tf-f\|_{\cC^m([0,1])}=0.
\]
\end{theorem}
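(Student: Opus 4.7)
The plan is to verify the three assertions of the theorem---that $\{\cQ_t\}$ is a semi-group, that it solves the initial value problem, and that it is positivity preserving---by piecing together the mapping properties of the local Neumann series $\tQ^{b_0}_{t,\ell},\,\tQ^{b_1}_{t,r}$, the smoothing behavior of the remainder kernel $h_t$, and the global uniqueness/maximum principle from Section~\ref{modmxprn}.

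First, I would establish that $\cQ_t$ maps $\cC^m([0,1])$ to itself with $u(\cdot,t)\to f$ in the $\cC^m$ norm. The decomposition $u=u_0-u_1$ with $u_0 = Q_t(f\otimes\delta_0)$ and $u_1 = Q_t(\Id - E_t+H_tE_t-\Id)(f\otimes\delta_0)$ splits the task in two. Near each endpoint the parametrix $q_t$ is (up to a cutoff) the Neumann series kernel $\tq^{b_\bullet}_t$. Propositions~\ref{prop1515}, \ref{atbests2} together with Proposition~\ref{prop99} (applied to the leading term $k^{b_\bullet}_t$) give convergence in $\cC^0([0,T];\cC^m)$ of $u_0(\cdot,t)$ to the corresponding cut-off piece of $f$, and these pieces agree with $f$ in the overlap region, producing $\lim_{t\to 0^+}\|u_0(\cdot,t)-f\|_{\cC^m}=0$. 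For $u_1$, the kernel $y^{1-b_0}(1-y)^{1-b_1}h_t(\cdot,\cdot)$ lies in $\CI([0,1]\times[0,1])$ and vanishes to all orders as $t\to 0^+$, so $(H_tE_t-E_t)(f\otimes\delta_0)$ is smooth in $x$, uniformly small as $t\to 0^+$; applying $Q_t$ preserves this smallness by the same Proposition~\ref{atbests2} style estimates. Hence $\|u_1(\cdot,t)\|_{\cC^m}\to 0$.

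Next I would verify that $u$ actually satisfies $(\pa_t-L)u=0$ for $t>0$ and the regularity hypotheses~\eqref{WFregcnds}. By construction, $(\pa_t-L)u_0 = v = -E_t(f\otimes\delta_0)$ and $(\pa_t-L)u_1=v$ as well, since $u_1=Q_t(\Id-H_t)v$ and $(\pa_t-L)Q_t = \Id - E_t$ while $(\Id-E_t)(\Id-H_t)=\Id$. So $(\pa_t-L)(u_0-u_1)=0$. The required regularity $u\in\cC^0([0,1]\times[0,\infty))\cap\cC^1([0,1]\times(0,\infty))$ and $\lim_{x\to 0^+,1^-}x(1-x)\pa_x^2 u=0$ follow because, pulled back under $\phi_\ell,\phi_r$ near each endpoint, $u$ coincides modulo a $\CI$ and rapidly decaying (as $t\to 0^+$) correction with the solution produced by $\tQ^{b_\bullet}_t$, which was shown in~\eqref{gbreg1} to be smooth up to $x=0$ and in particular satisfies the zero-flux condition.

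For the semi-group property $\cQ_{t+s}=\cQ_t\cQ_s$, I would use uniqueness: given $f\in\cC^m([0,1])$, both $\cQ_{t+s}f$ and $\cQ_t(\cQ_s f)$ satisfy~\eqref{genWFivp} with the same initial data $\cQ_s f$ at time $s$, both meet~\eqref{WFregcnds}, so the maximum-principle-based uniqueness proposition at the end of Section~\ref{modprob1} (which applies on $[0,1]$ as stated) forces equality. Positivity preservation is then immediate from the same maximum principle: if $f\geq 0$ and $\cQ_t f$ were negative somewhere, then by the $\cC^m$ convergence to $f$ at $t=0$ the infimum on $D_T$ would be attained in the interior of $(0,1)\times(0,T]$ or on $\{0,1\}\times(0,T]$, either of which contradicts the maximum principle applied to $-\cQ_t f$.

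The main obstacle, and the step that requires the most care, is the $\cC^m$ convergence $\cQ_t f\to f$ as $t\to 0^+$: although the individual pieces of the parametrix converge to $f$ by the model-operator theory, the correction $H_t E_t$ must be shown to contribute terms that vanish not only in $\cC^0$ but in $\cC^m$, which uses crucially that $h_t$ inherits the rapid decay of $e_t$ and that $Q_t$ maps bounded families to bounded $\cC^m$ families via Propositions~\ref{prop1515} and~\ref{atbests2}; the off-diagonal estimate Corollary~\ref{kernest1} is what prevents loss of regularity at $y=0,1$ when one differentiates the $H_tE_t$ term in $x$.
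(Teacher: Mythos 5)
Your proposal is correct and follows essentially the same route as the paper: Theorem~\ref{bscexstc} is stated there as the assembly of the parametrix construction of \S~\ref{htker4L} (the $\cC^m$ mapping and $t\to 0^+$ convergence coming from Propositions~\ref{prop99}, \ref{prop1515}, \ref{atbests2} and the rapid vanishing of $e_t$, $h_t$ via Corollary~\ref{kernest1}), combined with the maximum-principle uniqueness for regular solutions of~\eqref{genWFivp} to get the semi-group property and positivity, which is exactly the structure of your argument.
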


One of the reasons for having worked hard to establish convergence of the Neumann series used in
the construction of the solution operator $\cQ_t$ is that we can estimate how good of an approximation
the partial sums are.

\begin{proposition}
Fix $N \geq 0$ and define the operator $Q_{t,N}$ by pasting together the finite sums
\[
\sum_{j=0}^N (A_t^b)^j k_t^b
\]
using the solution operators for the models at the left and right endpoints of
$[0,1]$, and denote its kernel by $q_{t,N}$.  For any $f \in \calC^m([0,1])$,
let $u(x,t)$ denote the exact solution to~\eqref{genWFivp}; for $N>m$ and
$2j\leq m,$ the function
\begin{equation}
u_N(x,t) = \int_0^1 q_{t,N}(x,y)f(y)\, dy
\end{equation}
satisfies
\[
||\pa_x^j[u_N(\cdot,t) - u(\cdot,t)]||_{\calC^0([0,1])} \leq C t^{\frac{N+1-2j}{2}}
\]
where the constant $C$ can be estimated in terms of $j, N$, and the
coefficients of $L$.  In particular, when $m= N = 0$ then
\[
\sup_{x \in [0,1]}  |u_0(x,t) - f(x)| \leq C\sqrt{t}.
\]
\end{proposition}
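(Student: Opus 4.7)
The plan is to exploit the representation \eqref{exctsoln} for the exact solution: since $u(x,t)=\int_0^1 q_t(x,y)f(y)\,dy + R_t(x)$, where $R_t$ denotes the $h_s$-correction term, and $u_N(x,t)=\int_0^1 q_{t,N}(x,y)f(y)\,dy$, the difference splits cleanly as
\[
u(x,t)-u_N(x,t)\;=\;\int_0^1\bigl[q_t-q_{t,N}\bigr](x,y)\,f(y)\,dy\;+\;R_t(x).
\]
Near the left endpoint, $q_t-q_{t,N}$ is, after the cutoffs $\varphi_0,\varphi_l$ and the change of variables $\phi_l$, exactly the Neumann tail $\sum_{j'=N+1}^{\infty}(A_t^{b_0})^{j'}k_t^{b_0}$, and similarly at the right endpoint. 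The principal error is thus a sum of geometric type in $\sqrt{t}$.

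To bound the tail in $\cC^0$, I would apply Proposition \ref{prop1515} to each summand $(A_t^{b_0})^{j'}g$ with $g=k_t^{b_0}*\tf$, where $\tf$ is the cutoff pullback of $f$. Lemma \ref{lem2} supplies the required input estimate $|\pa_x g|\leq C\|f\|_\infty/\sqrt{xt}$, so by \eqref{Aestj} the $j'$-th summand is $O(t^{j'/2})$ with constants that remain summable (indeed factorially small) in $j'$; summing from $j'=N+1$ gives the $O(t^{(N+1)/2})$ bound. For the $\pa_x^j$-estimate with $2j\leq m$, Corollary \ref{cor33} justifies differentiating inside the Neumann sum, after which Proposition \ref{atbests2} (especially \eqref{dAestjl}) together with Lemma \ref{lem2} applied at higher regularity controls $\pa_x^j(A_t^{b_0})^{j'}g$. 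Passing from the $1/\sqrt{x}$-type pointwise bound to a uniform estimate on $[0,1]$ uses the parabolic scaling $x\sim t$ natural to $L_b=x\pa_x^2+b\pa_x$, which effectively converts each $1/\sqrt{x}$ into $1/\sqrt{t}$, accounting for the announced additional loss of $t^{-j}$ and producing the claimed $O(t^{(N+1-2j)/2})$. The correction $R_t$ presents no difficulty: by the construction of $h_t$ after Corollary \ref{kernest1}, the kernel $y^{1-b_0}(1-y)^{1-b_1}h_t$ and all its derivatives are $O(t^M)$ for every $M$, so $\pa_x^j R_t=O(t^{\infty})$.

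The principal obstacle is the careful bookkeeping of the $1/\sqrt{x}$-loss per spatial derivative in \eqref{dAestj}--\eqref{dAestjl} as it propagates through the iterated Neumann sum, and the conversion of this loss into a uniform $\cC^0$ bound via the parabolic scaling; one must keep the summability in $j'$ while tracking exactly how many powers of $\sqrt{t}$ are sacrificed. A secondary complication is that $q_{t,N}$ is assembled from two model parametrices through the cutoffs $\varphi_0,\varphi_l,\varphi_r$, so spatial derivatives produce commutator terms supported in the overlap region where both cutoffs are active; these are harmless because the off-diagonal estimate \eqref{offdiag2} of Corollary \ref{kernest1} forces them to decay like $e^{-c/t}$, and they can be absorbed into the $O(t^{\infty})$ remainder together with $R_t$. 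In the special case $m=N=0$, none of the derivative losses are incurred and the estimate reduces to the sharpest bound $\sqrt{t}$, which is precisely the size of the first omitted Neumann term $(A_t^{b_0})k_t^{b_0}$ acting on a $\cC^0$ datum.
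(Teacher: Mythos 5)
Your overall decomposition is the right one, and is the argument the paper intends: $u-u_N$ splits into the pasted Neumann tails $\sum_{j'>N}(A_t^{b})^{j'}\tg$ at the two ends plus the $h_s$-correction, the tails are summed using Proposition~\ref{prop1515} with the input bound from Lemma~\ref{lem2}, and the correction is $O(t^\infty)$ because $y^{1-b_0}(1-y)^{1-b_1}h_t$ vanishes rapidly as $t\to 0$. The $\cC^0$ case ($j=0$) as you present it is correct, and the terms where $\pa_x$ hits the cutoff $\varphi_0$ are indeed harmless (though not because of the off-diagonal decay \eqref{offdiag2}: they simply involve fewer derivatives of the tail and are controlled by the same estimates).

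The genuine gap is in the derivative case. Your mechanism --- take the $1/\sqrt{x}$ bounds of \eqref{dAestj}/\eqref{dAestjl} and ``convert each $1/\sqrt{x}$ into $1/\sqrt{t}$ by parabolic scaling'' --- is not a valid step: the estimate must hold uniformly on $[0,1]$, in particular in the region $x\ll t$, where $1/\sqrt{x}$ is not dominated by $1/\sqrt{t}$, and no scaling argument lets you substitute one for the other in a pointwise bound. Moreover the bookkeeping does not close even formally: trading $j$ factors of $x^{-1/2}$ for $t^{-1/2}$ would produce a loss $t^{-j/2}$, i.e.\ $t^{(N+1-j)/2}$, not the stated $t^{(N+1-2j)/2}$. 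The correct route avoids this entirely: since $2j\le m$, apply Corollary~\ref{cor33} and Lemma~\ref{lem2} to verify the hypothesis \eqref{dgestl} for $\tg$ at level $\ell=j$ (with $M\sim\|f\|_{\cC^{j}}$), and then use the \emph{uniform} estimate \eqref{Aestjl} of Proposition~\ref{atbests2} --- not \eqref{dAestjl} --- which bounds $\pa_x^{j}(A_t^b)^{j'}\tg$ by $C^{j'}D_{j'-1}t^{j'/2}/j'$ with no $x$-singularity. Summing the tail over $j'\ge N+1$ gives a bound $Ct^{(N+1)/2}$, uniform in $x$, which is in fact stronger than, and in particular implies, the claimed $Ct^{(N+1-2j)/2}$.
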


We recall the changes of variables
\begin{equation}
\begin{split}
\sqrt{x_l}=\sin^{-1}\sqrt{x} &\quad \sqrt{y_l}=\sin^{-1}\sqrt{y}\\
\sqrt{x_r}=\sin^{-1}\sqrt{1-x}&\quad \sqrt{y_r}=\sin^{-1}\sqrt{1-y};
\end{split}
\end{equation}
differentiating we see that
\begin{equation}
  \frac {dy_l}{dy}=\frac{\sin^{-1}\sqrt{y}}{\sqrt{y(1-y)}}\quad
\frac {dy_r}{dy}=-\frac{\sin^{-1}\sqrt{1-y}}{\sqrt{y(1-y)}}.
\end{equation}
 Substituting, we see that, for $b_0$ and $b_1$ non-zero, the leading term in the
solution operator $\hq_t$ is given by:
\begin{multline*}
q_{t,0}(x,y)= \varphi_0(x)  y_\ell^{-1} \left( \frac{y_\ell}{t}\right)^{b_0} e^{-\frac{x_\ell + y_\ell}{t}}
\psi_{b_0}(x_\ell y_\ell/t^2)\varphi_\ell(y)\frac{\sin^{-1}\sqrt{y}}{\sqrt{y(1-y)}} + \\
 (1-\varphi_0(x)) y_r^{-1} \left( \frac{y_r}{t}\right)^{b_1} e^{-\frac{x_r + y_r}{t}}
\psi_{b_1}(x_r y_r/t^2) \varphi_r(y)\frac{\sin^{-1}\sqrt{1-y}}{\sqrt{y(1-y)}}.
\end{multline*}

\section{The infinitesimal generator and long time asymptotics of solutions}
\label{s.infgen}
By Theorem~\ref{bscexstc}, $\cQ_t$ defines a semi-group on $\cC^0$; indeed, it
also defines a semi-group on $\cC^m$ for every nonnegative integer $m$. To
understand this $\cC^0$ semi-group better, and in particular to estimate the
long-time asymptotics of solutions, we now seek a characterization of its
infinitesimal generator $A$ as an unbounded operator on $\cC^0$, including some
features of its spectrum and a description of the behavior at $x=0$ and $x=1$
of the elements in its domain.  These facts will be proved using the various
regularity results we have obtained.  At various points in this discussion it
will be necessary bring in the the adjoint operator $L^t$, and in particular
the infinitesimal generator $A^*$ for the adjoint semi-group. Note that $A^*$
is an unbounded operator on $[\cC^0([0,1])]'$, which we identify with
$\cM([0,1]),$ the space of finite Borel measures on $[0,1].$
Section~\ref{adjsmgrp} contains a more complete discussion of the adjoint
semi-group.

The first step is to note that the infinitesimal generator has a compact resolvent.
\begin{proposition} Let $A$ be the infinitesimal generator associated to the $\cC^0$ 
semi-group  defined by $\cQ_t$. Then as an unbounded operator on $\cC^0([0,1]),$ 
the spectrum of $A$ lies in the left half-plane $\{\lambda:\:\Re\lambda\leq 0\}$; 
furthermore, $A$ has a compact resolvent.
\end{proposition}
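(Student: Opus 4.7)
The spectral bound is an immediate consequence of the maximum principle and Hille--Yosida, so the main work is in showing the resolvent is compact, which I will do by proving that $\cQ_t$ is a compact operator on $\cC^0([0,1])$ for every $t>0$. For the first assertion, applied to $\pm\cQ_tf$, the maximum principle for solutions of~\eqref{genWFivp} (the Proposition of Section~\ref{natbvp}, whose regularity hypotheses~\eqref{WFregcnds} are verified for $\cQ_tf$ by Theorem~\ref{bscexstc}) gives $\|\cQ_t f\|_{\cC^0([0,1])}\leq \|f\|_{\cC^0([0,1])}$. Thus $\cQ_t$ is a $\cC^0$-contraction semi-group, and Hille--Yosida places $\{\Re\lambda>0\}$ in the resolvent set of $A$ with
\[
(\lambda-A)^{-1}f=\int_0^\infty e^{-\lambda t}\cQ_tf\,dt,\qquad \|(\lambda-A)^{-1}\|\leq (\Re\lambda)^{-1}.
\]

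Now suppose for a moment that $\cQ_t$ is compact on $\cC^0([0,1])$ for every $t>0$. Using the semi-group identity, I split for any $\delta>0$
\[
(\lambda-A)^{-1}=\int_0^\delta e^{-\lambda s}\cQ_s\,ds+e^{-\lambda\delta}\,(\lambda-A)^{-1}\circ \cQ_\delta.
\]
The first summand has operator norm at most $\delta$; the second is the composition of a bounded operator with the compact $\cQ_\delta$ and is therefore compact. Since the compact operators form a norm-closed subspace, letting $\delta\to 0$ exhibits $(\lambda-A)^{-1}$ as a norm-limit of compact operators, hence itself compact. To prove compactness of $\cQ_t$ I invoke Arzel\`a--Ascoli: the image of the unit ball of $\cC^0([0,1])$ under $\cQ_t$ is uniformly bounded by the contraction property, so it suffices to show equicontinuity. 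For this I will establish the pointwise derivative estimate
\[
|\pa_x\cQ_t f(x)|\leq C_t\|f\|_{\cC^0([0,1])}\left(\frac{1}{\sqrt{x}}+\frac{1}{\sqrt{1-x}}\right),\qquad x\in(0,1),
\]
by combining Lemma~\ref{lem2}, applied to the model kernels $k^{b_0}_t$ and $k^{b_1}_t$ at the two endpoints, with the iterate estimates of Propositions~\ref{prop1515} and~\ref{atbests2}, which show that the Neumann series $\tQ^{b_i}_t=\sum_j(A^{b_i}_t)^j\circ k^{b_i}_t$ preserves this rate of blow-up. The correction term $\int_0^t\!\int_0^1 q_{t-s}(x,z)h_s(z,y)\,dz\,ds$ appearing in~\eqref{exctsoln} is smooth in $x$ for $t>0$ because $y^{1-b_0}(1-y)^{1-b_1}h_t$ is $\CI$ and vanishes like $O(t^\infty)$ as $t\searrow 0$, so it contributes only a bounded term. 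Integrating the derivative bound then yields uniform H\"older-$\tfrac12$ continuity
\[
|\cQ_t f(x_2)-\cQ_t f(x_1)|\leq 2C_t\|f\|_{\cC^0}\bigl(|\sqrt{x_2}-\sqrt{x_1}|+|\sqrt{1-x_2}-\sqrt{1-x_1}|\bigr),
\]
so the image of the unit ball is equicontinuous.

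The main technical hurdle is the derivative estimate itself. The difficulty is that $\pa_x\cQ_tf$ is not uniformly bounded up to $x=0,1$: the best one can expect when $f\in\cC^0$ is the $(\sqrt{x})^{-1}+(\sqrt{1-x})^{-1}$ rate produced by Lemma~\ref{lem2}, and one must verify that neither the Neumann iteration nor the off-diagonal correction term (which is controlled by Corollary~\ref{kernest1}) degrades this rate. This is where most of the bookkeeping in the proof sits, but it is a direct assembly of estimates already established in Sections~\ref{s.modfs} through~\ref{htker4L} rather than a genuinely new analytic difficulty.
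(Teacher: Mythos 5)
Your proof is correct, and its overall architecture is the same as the paper's: contraction via the maximum principle, compactness of $\cQ_t$ via Arzel\`a--Ascoli, and then compactness of the resolvent from compactness of the semi-group. Where you differ is in how the two nontrivial steps are implemented. For the last step the paper simply cites the semi-group literature (Section 8.2 of Davies), whereas you give the standard Laplace-transform splitting $(\lambda-A)^{-1}=\int_0^\delta e^{-\lambda s}\cQ_s\,ds+e^{-\lambda\delta}(\lambda-A)^{-1}\cQ_\delta$ and let $\delta\to 0$; this is a perfectly valid, self-contained replacement. For equicontinuity the paper argues softly: for $t>0$ and $f\in\cC^0$ one has $\cQ_tf\in\CI([0,1])$, so the closed graph theorem makes $\cQ_t:\cC^0\to\cC^1$ bounded, and Arzel\`a--Ascoli finishes; no explicit estimate is ever written down. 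You instead assemble a quantitative weighted gradient bound $|\pa_x\cQ_tf(x)|\leq C_t\|f\|_{\cC^0}\bigl(x^{-1/2}+(1-x)^{-1/2}\bigr)$ from Lemma~\ref{lem2}, the iterate estimates of Propositions~\ref{prop1515} and~\ref{atbests2}, and Corollary~\ref{kernest1}, yielding an explicit H\"older-$\tfrac12$ modulus. (In fact Lemma~\ref{lem2} gives $C_b\|f\|_\infty/(t+\sqrt{xt})$, which for fixed $t>0$ is uniformly bounded; the $x^{-1/2}$ blow-up only enters through the Neumann iterates, so your claimed rate is if anything generous.) The trade-off: the paper's soft route is shorter and avoids any bookkeeping on the correction term $\int_0^t\int_0^1 q_{t-s}(x,z)h_s(z,y)\,dz\,ds$, which in your version is handled only by a sketched appeal to the rapid vanishing of $h_s$; your route, once that bookkeeping is written out, is self-contained, avoids the closed graph theorem and the external citation, and produces an explicit modulus of continuity rather than a purely qualitative compactness statement.
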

\begin{proof} The first statement follows from the maximum principle, which
  implies that $\|\cQ_tf\|_{\cC^0}\leq \|f\|_{\cC^0}.$ Next, observe that for
  any $t>0$ and $f \in \cC^0([0,1])$, $\cQ_tf\in\CI([0,1])$ The closed graph
  theorem and Arzela-Ascoli theorem now apply to show that $\cQ_t$ is a compact
  operator on $\cC^0$ for any $t > 0$, so the second statement follows from the
  results in Section 8.2 of~\cite{DaviesLOS}.
\end{proof}

Of course, the full characterization of $A$ involves a detailed description of its domain. This will be
based on a basic result from semi-group theory, due to Nelson:
\begin{proposition}[Nelson,~\cite{nelson59}] Let $B$ be a Banach space and $Z$ a closed operator on 
$B$ generating a semi-group $T_t$. If $D\subset\Dom(Z)$ is a subspace which is dense in $B,$ and if 
$T_t D\subset D,$ for every $t>0,$ then $D$ is a core for $Z$, i.e.\ 
\[
Z=\overline{Z\restrictedto_D}.
\]
\end{proposition}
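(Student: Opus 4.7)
The plan is to establish the conclusion $Z = \overline{Z\restrictedto_D}$ by verifying graph-density: for every $f \in \Dom(Z)$ I would produce a sequence $f_n \in D$ with $f_n \to f$ and $Zf_n \to Zf$ in $B$. Since $Z$ is closed, this suffices. The main tool is the resolvent. Because $Z$ generates a $C_0$-semigroup with $\|T_t\| \leq M_0 e^{\omega t}$ for some constants $M_0, \omega$, for any $\lambda > \omega$ the resolvent $R_\lambda = (\lambda - Z)^{-1}$ is bounded on $B$, is given by the Laplace transform $R_\lambda g = \int_0^\infty e^{-\lambda t} T_t g\, dt$, and maps $B$ bijectively onto $\Dom(Z)$. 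Given $f \in \Dom(Z)$, set $h := (\lambda - Z)f \in B$ and, using density of $D$ in $B$, choose $h_n \in D$ with $h_n \to h$. Then $\tilde f_n := R_\lambda h_n \to f$ in $B$, and since $Z\tilde f_n = \lambda \tilde f_n - h_n$, also $Z\tilde f_n \to Zf$.

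The delicate point is that $\tilde f_n$ need not itself lie in $D$, so I would approximate each $\tilde f_n$ in graph norm by honest elements of $D$ via Riemann sums. Because $h_n \in D \subset \Dom(Z)$ we have $Z T_t h_n = T_t Z h_n$ for all $t \geq 0$, so for integers $M, N \geq 1$ the finite sum
\[
F_{n,M,N} := \frac{M}{N}\sum_{k=1}^{N} e^{-\lambda k M/N}\, T_{kM/N}\, h_n
\]
belongs to $D$, since $T_t h_n \in D$ for every $t > 0$ and $D$ is a linear subspace. As $N \to \infty$, strong continuity of $s \mapsto T_s h_n$ on the compact interval $[0,M]$ gives $F_{n,M,N} \to \int_0^M e^{-\lambda t} T_t h_n \, dt$ in $B$, and the identical argument applied to $Zh_n$ gives $ZF_{n,M,N} = \frac{M}{N}\sum_{k} e^{-\lambda k M/N}\, T_{kM/N}\, Zh_n \to \int_0^M e^{-\lambda t} T_t Zh_n \, dt$. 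Letting $M \to \infty$, the exponential decay supplied by $\lambda > \omega$ controls the tails of both integrals, so the respective limits are $\tilde f_n$ and $Z\tilde f_n$. A standard diagonal extraction then selects indices $(n_k, M_k, N_k)$ so that $f_k := F_{n_k, M_k, N_k} \in D$ satisfies $(f_k, Zf_k) \to (f, Zf)$ in $B \times B$, completing the proof.

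The main obstacle is precisely this descent step. The natural candidate $\tilde f_n = R_\lambda h_n$ is a Bochner integral of a $D$-valued integrand, but since $D$ is only assumed to be a subspace and not closed in $B$, such integrals generally fail to lie in $D$. The Riemann-sum discretization, combined with the commutation $Z T_t = T_t Z$ on $D$ inherited from $T_t D \subset D$ together with $D \subset \Dom(Z)$, is exactly the device that replaces the integrals by finite $D$-linear combinations while keeping both the $B$-norm and the $Z$-image under simultaneous control.
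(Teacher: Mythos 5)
Your argument is correct. Note, though, that the paper does not prove this proposition at all: it is quoted verbatim from Nelson's 1959 paper as a known tool, so there is no in-paper argument to compare against. What you have written is the standard (essentially Nelson's) proof: reduce to graph-density via the resolvent $R_\lambda h_n$ with $h_n\in D$, then descend from the Laplace-transform integral to finite Riemann sums, which lie in $D$ because $T_tD\subset D$ and $D$ is a subspace, with the graph norm controlled by applying $Z$ termwise. One small remark: the commutation $ZT_th_n=T_tZh_n$ is a general fact for any $h_n\in\Dom(Z)$ and a $\cC^0$-semigroup; it is not ``inherited from $T_tD\subset D$'' --- the invariance of $D$ is needed only to place the Riemann sums in $D$. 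This does not affect the validity of your proof.
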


To apply this, recall that $\cQ_t\cC^2([0,1])\subset\cC^2([0,1])$ for $t>0,$ and also, if 
$f\in\cC^2([0,1])$, then $u=\cQ_tf$ satisfies $\pa_tu=Lu$ for $t \geq 0$. 
\begin{proposition}\label{p.infgen} If $A$ is the generator of the  $\cC^0$ semi-group defined 
by $\cQ_t,$ then
\[
A=\overline{L\restrictedto_{\cC^2([0,1])}}.
\]
\end{proposition}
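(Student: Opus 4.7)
The plan is to verify the hypotheses of Nelson's proposition stated immediately above with $B = \cC^0([0,1])$, $Z = A$, $T_t = \cQ_t$, and $D = \cC^2([0,1])$, and then to check that $A$ agrees with $L$ on this core.

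First I would handle the two easy hypotheses. Density of $\cC^2([0,1])$ in $\cC^0([0,1])$ is immediate from the Weierstrass approximation theorem, since polynomials belong to $\cC^2([0,1])$. The invariance $\cQ_t\cC^2([0,1])\subset \cC^2([0,1])$ for every $t>0$ is precisely the $m=2$ case of Theorem~\ref{bscexstc}, which asserts that $\cQ_t$ defines a semi-group on $\cC^m([0,1])$ for every $m\in\bbN\cup\{0\}$.

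Next I would show that $\cC^2([0,1])\subset\Dom(A)$ and that $Af=Lf$ for every $f\in\cC^2([0,1])$. Fix $f\in\cC^2([0,1])$ and set $u(x,t)=\cQ_tf(x)$. By Theorem~\ref{bscexstc}, $u\in\cC^0([0,\infty);\cC^2([0,1]))$ with $u(\cdot,0)=f$; moreover, for $t>0$ the parametrix construction from Section~\ref{htker4L} gives $u\in\cC^{\infty}([0,1]\times(0,\infty))$ and $\pa_tu=Lu$ pointwise on $[0,1]\times(0,\infty)$. Since $L$ has smooth coefficients with $a(x)=x(1-x)$ vanishing at the endpoints, $L$ maps $\cC^2([0,1])$ boundedly into $\cC^0([0,1])$, and continuity of $t\mapsto u(\cdot,t)$ in the $\cC^2$-norm forces $Lu(\cdot,t)\to Lf$ in $\cC^0([0,1])$ as $t\to 0^+$. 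Integrating in time gives
\[
\frac{\cQ_tf-f}{t}=\frac1t\int_0^t Lu(\cdot,s)\,ds\longrightarrow Lf\quad\text{in }\cC^0([0,1]),
\]
so $f\in\Dom(A)$ and $Af=Lf$.

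With these three ingredients, Nelson's proposition applies and yields $A=\overline{A\restrictedto_{\cC^2([0,1])}}=\overline{L\restrictedto_{\cC^2([0,1])}}$, which is the desired identity. The main delicate point is the second step: one must be sure the convergence of $Lu(\cdot,t)$ to $Lf$ takes place in the sup-norm rather than just pointwise, which requires the $\cC^2$-continuity of $t\mapsto u(\cdot,t)$ up to $t=0$ guaranteed by Theorem~\ref{bscexstc}; the degeneracy of $a(x)$ at the endpoints is harmless because the first and second derivatives of $u(\cdot,t)$ are continuous on the closed interval $[0,1]$, so multiplying by the bounded functions $x(1-x)$ and $b(x)$ preserves uniform convergence.
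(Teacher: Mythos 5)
Your proof is correct and follows essentially the same route as the paper: show $\cC^2([0,1])\subset\Dom(A)$ with $Af=Lf$ by writing $(\cQ_tf-f)/t=\frac1t\int_0^t Lu(\cdot,s)\,ds$ and using $\cC^2$-continuity of $u(\cdot,t)$ up to $t=0$, then invoke the $\cQ_t$-invariance of $\cC^2$, density, and Nelson's theorem. Your extra remark about uniform (rather than pointwise) convergence of $Lu(\cdot,t)$ to $Lf$ just makes explicit what the paper's assertion that $Lu\in\cC^0([0,1]\times[0,\infty))$ already encodes.
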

\begin{proof} If $f\in\cC^2([0,1]),$ then $u(x,t)=\cQ_tf(x)$ has one time derivative and two 
spatial derivatives, all of which are continuous on $[0,1]\times [0,\infty)$. Now integrate the
equation satisfied by $u$ to compute that
\[
\frac{\cQ_tf-f}{t}=\frac{1}{t}\int\limits_0^tLu(x,s)ds.
\]
Since $Lu \in \cC^0([0,1]\times [0,\infty))$ and equals $Lf(x)$ at $t=0$, we have
\[
\frac{\cQ_tf-f}{t}=Lf+o(1).
\]
This implies that $\cC^2([0,1])\subset\Dom(A)$ and on this subspace, $Af=Lf.$ 
Since $\cC^2([0,1])$ is dense is $\cC^0([0,1])$, the proposition now follows directly
from Nelson's theorem.
\end{proof}
On the other hand, if $f\in\Dom(A)$ then $Lu\in\cC^0([0,1])$, so the one-dimensional version 
of ``elliptic regularity'' shows that $f\in\cC^2((0,1))$. In other words, the final characterization of
$\Dom(A)$ involves only the description of its elements at the boundaries. 

\subsection*{The case where neither $b(0)$ nor $b(1)$ vanish}
It turns out that the results in case either $b(0)$ or $b(1)$ vanish are slightly more complicated to state, so
for the moment let us suppose that $0<b(0), -b(1)$. As before, denote $b(0) = b_0$ and $-b(1) = b_1$. 

We begin by noting that there is a solution $v$ to the adjoint equation $L^tv=0$, where $L^t$ is given
in \eqref{eq:adjoint}, satisfying the adjoint boundary conditions \eqref{eq:adjbc}. Thus 
\begin{equation}\label{eqmeas}
v_0(x)=x^{b_0-1}(1-x)^{b_1-1}e^{B(x)},
\end{equation}
where $B(x)\in \calC^\infty([0,1])$, and 
\[
\pa_x[x(1-x)v_0](x)-b(x)v_0(x)=0.
\]
The existence of $v_0$ follows using standard ODE techniques. 

Choose $\varphi\in\CI([0,1])$ with support in $[0,1)$ such that $\varphi(x)=1,$ for $x$ in $[0,\frac12]$. 
If $f\in\cC^2$, then 
\begin{equation}\label{bdryidnt1}
\int\limits_0^1 (Lf(x)) \varphi(x)v_0(x)\, dx=\int\limits_{0}^1f(x) L^t(\varphi v_0)(x) \, dx.
\end{equation}
Since $\cC^2$ is dense in $\Dom(A),$ this identity also holds for the graph closure.  If $f\in\Dom(A),$ and $\delta$ is any
small positive number, then 
\[
\int\limits_{\delta}^1[Lf(x)\varphi(x) v_0(x)-f(x)L^t(\varphi v_0)(x)]\, dx= -\delta(1-\delta)v_0(\delta)\pa_x f(\delta).
\]
Using \eqref{bdryidnt1} and the asymptotic form of $v_0$, we deduce that 
\begin{equation}\label{bc0A}
\lim_{x\to 0^+}x^{b_0}\pa_x f(x)=0.
\end{equation}
A similar argument using a cutoff function with support near to $1$ shows that
\begin{equation}\label{bc1A}
\lim_{x\to 1^-}(1-x)^{b_1}\pa_x f(x)=0.
\end{equation}
Note that these are precisely the boundary conditions described in Section~\ref{natbvp}. They are also the ones described
by Feller as defining a positivity preserving contraction semi-group on $\cC^0([0,1]).$ 

If $f\in\cC^0_c((0,1)),$ then the definition of $\hq_t$ and the maximum principle imply
\[
\lim_{t\to 0^+}\int\limits_{0}^{1}\hq_t(x,y)f(y)\, dy=f(x)\text{ and }
\lim_{t\to 0^+}\int\limits_{0}^{1}\hq_t(x,y)f(x)\, dx=f(y).
\]
Using this and the sharp maximum principle for parabolic operators in one
dimension, see e.g.\ Theorem 2 in Chapter 3 of~\cite{ProtterWeinberger}, by a
straightforward limiting argument we obtain a strict pointwise lower bound for
$\hq_t:$
\begin{proposition} If $b(0),$ and $b(1)$ are non-vanishing, then for $t>0$ and $x,y\in [0,1],$ 
\begin{equation}\label{kerlb1}
\hq_t(x,y)>0.
\end{equation}
\end{proposition}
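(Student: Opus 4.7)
The plan is to argue by contradiction using the strong maximum principle on subdomains where $L$ is uniformly parabolic, together with the initial-data identities for $\hq_t$ just established. First I would note that Theorem \ref{bscexstc} together with the positivity-preservation of $\cQ_t$ gives $\hq_t(x,y)\geq 0$ on $[0,1]^2$ for every $t>0$, and that the construction in Section \ref{htker4L}, combined with the regularity estimates of Section \ref{s.pests}, shows that for each fixed $y_0\in(0,1)$ the function $u(x,t):=\hq_t(x,y_0)$ is $\CI$ on $[0,1]\times(0,\infty)$ and solves $(\pa_t-L)u=0$ there.

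Next, assuming for contradiction that $\hq_{t_0}(x_0,y_0)=0$ for some $t_0>0$ and some interior point $(x_0,y_0)\in(0,1)^2$, I would pick $\epsilon>0$ with $\epsilon<\min(x_0,1-x_0)$ and apply the strong maximum principle of Protter-Weinberger (Theorem 2, Chapter 3 of \cite{ProtterWeinberger}) to $u$ on the rectangle $R_\epsilon:=[\epsilon,1-\epsilon]\times(0,t_0]$, on which $L$ is uniformly parabolic since $x(1-x)\geq\epsilon(1-\epsilon)>0$. Because the nonnegative solution $u$ attains its infimum $0$ at the interior point $(x_0,t_0)\in R_\epsilon$, this forces $u\equiv 0$ on $R_\epsilon$; letting $\epsilon\to 0^+$ gives $\hq_t(\cdot,y_0)\equiv 0$ on $(0,1)\times(0,t_0]$. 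To produce a contradiction I would then choose a nonnegative $f\in\cC_c^0((0,1))$ with $f(y_0)>0$: on the one hand
\[
\int_0^1\hq_t(x,y_0)f(x)\,dx=0\qquad\text{for every } t\in(0,t_0],
\]
while on the other hand the second initial-data limit recorded immediately before the proposition statement gives that this integral tends to $f(y_0)>0$ as $t\to 0^+$, a contradiction. This establishes $\hq_t(x,y)>0$ throughout $(0,1)\times(0,1)$ for every $t>0$.

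For the remaining boundary cases, where $x_0$ or $y_0$ lies in $\{0,1\}$, I would read the strict positivity directly from the explicit expression for the leading parametrix term $q_{t,0}$ displayed at the end of Section \ref{htker4L}: each constituent factor there (the entire functions $\psi_{b_0}$ and $\psi_{b_1}$, the Gaussian-type exponentials, the arcsine, and the partition-of-unity cut-offs) is strictly positive on its support, and the Neumann-series correction kernel $h_t$ is controlled by the estimates of Section \ref{s.pests} and is dominated by the leading contribution as $t\to 0^+$, so it cannot cancel it. The semigroup property then propagates short-time positivity to all $t>0$ by writing $\hq_t$ as an iterate of $\hq_{t/n}$ for large $n$ and using the interior positivity already established.

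The main obstacle in the central argument will be the coefficient degeneracy of $L$ at $x=0,1$: this prevents a direct application of the strong maximum principle on $(0,1)\times(0,t_0]$ and is what forces the exhaustion $\epsilon\to 0^+$ by sub-intervals $[\epsilon,1-\epsilon]$ on which $L$ is uniformly parabolic. A secondary point is to confirm that the initial-data identity $\int_0^1\hq_t(x,y_0)f(x)\,dx\to f(y_0)$ holds in the strong pointwise sense used here for each $y_0\in(0,1)$, but this is exactly what has been recorded in the paragraph preceding the proposition.
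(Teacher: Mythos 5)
Your central argument is essentially the paper's own proof: the authors obtain the bound by combining the two initial-data limits recorded just before the proposition with the sharp maximum principle of Protter--Weinberger (Theorem 2, Chapter 3 of \cite{ProtterWeinberger}) and a ``straightforward limiting argument,'' which is exactly your exhaustion by rectangles $[\epsilon,1-\epsilon]\times(0,t_0]$ on which $L$ is uniformly parabolic, followed by the contradiction with $\int_0^1\hq_t(x,y_0)f(x)\,dx\to f(y_0)$. That part of your proposal is correct and needs no change.

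The gap is in your treatment of the endpoints. You propose to read strict positivity at $x\in\{0,1\}$ (and at $y\in\{0,1\}$) ``directly'' from the displayed leading term $q_{t,0}$, but that expression carries the cutoffs $\varphi_0,\varphi_\ell,\varphi_r$: for instance $q_{t,0}(0,y)=0$ identically for $y>\tfrac34$, so in off-diagonal regions the positivity must come entirely from the higher Neumann-series terms and the $h_t$-correction, and since in those regions both the leading term and the correction are of size $e^{-c/t}$, the claim that the correction ``cannot cancel'' the leading term is not justified as stated. A clean way to finish, in the spirit of your own fallback, is the semigroup identity $\hq_t(x_0,y)=\int_0^1\hq_{t/2}(x_0,z)\hq_{t/2}(z,y)\,dz$: since $\cQ_t1=1$ one has $\int_0^1\hq_{t/2}(x_0,z)\,dz=1$ and $\hq_{t/2}(x_0,\cdot)\geq0$, so the interior positivity you have already established for $\hq_{t/2}(\cdot,y)$ forces $\hq_t(x_0,y)>0$ for $x_0=0,1$; alternatively a Hopf boundary-point argument at $x=0,1$, using $b(0)>0$ and $b(1)<0$ together with the equation $\pa_tu(0,t)=b(0)\pa_xu(0,t)$, rules out a zero there. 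For $y\in\{0,1\}$ the pointwise statement should really be interpreted through the weight $y^{b_0-1}(1-y)^{b_1-1}$ of Corollary~\ref{kernest1} (i.e.\ as strict positivity of the regular factor), a point your argument does not address, although the paper itself passes over it in silence.
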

In this case, for each $t>0,$ the kernel $\hq_t(x,y)$ defines a strictly
positive operator on $\cC^0([0,1])$. In other words, if $f\in\cC^0([0,1])$ is
non-negative and not identically zero, then $\cQ_t f(x)>0$ for $x\in [0,1]$ and
$t > 0$.  Consequently, we can now apply Theorem 23.1 from~\cite{Lax} (The
Perron-Frobenius Theorem) to conclude that:
\begin{theorem}\label{spcthrybnz} If $b(0)$ and $b(1)$ are non-zero, then the infinitesimal generator $A$ is a 
  compact operator on $\cC^0([0,1])$ with spectrum lying in $\{\lambda:
  \Re\lambda \leq 0\}$. The only element in $\sigma(A)$ on the imaginary axis
  is the point $\lambda=0$, and the only associated eigenfunctions are the
  constant functions. The function $v_0,$ defined in~\eqref{eqmeas} spans the
  $0$-eigenspace of $A^*.$
\end{theorem}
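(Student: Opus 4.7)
The plan is to transfer spectral information from the compact positive semigroup operators $\cQ_t$ (for $t>0$) to their infinitesimal generator $A$ by applying the Perron--Frobenius theorem (Lax, Theorem~23.1) and then using the standard spectral inclusion $e^{t\sigma(A)} \subseteq \sigma(\cQ_t)$.

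First I would verify that the constants form the $1$-eigenspace of $\cQ_t$. Since $\mathbf{1}\in\cC^2([0,1])$ satisfies $L\mathbf{1}=0$ and both boundary conditions \eqref{bc0A}, \eqref{bc1A} trivially, the uniqueness clause in Theorem~\ref{bscexstc} gives $\cQ_t\mathbf{1}=\mathbf{1}$ for all $t\geq 0$. Combined with the contraction estimate $\|\cQ_t\|_{\cC^0\to\cC^0}\leq 1$ from the maximum principle, this shows that the spectral radius $r(\cQ_t)=1$. Fix any $t_0>0$. The operator $\cQ_{t_0}$ is compact on $\cC^0([0,1])$ (by the smoothing proved in Theorem~\ref{bscexstc} together with Arzel\`a--Ascoli) and strictly positive on the cone of nonnegative functions by the pointwise bound $\hq_{t_0}(x,y)>0$ established just above. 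Lax's version of the Perron--Frobenius theorem then applies: $1$ is an algebraically simple eigenvalue of $\cQ_{t_0}$ with one-dimensional eigenspace $\mathbb{C}\cdot\mathbf{1}$, and every other $\mu\in\sigma(\cQ_{t_0})$ satisfies $|\mu|<1$.

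Next I would transfer this to $A$. Because the resolvent of $A$ is compact (as proved in the preceding proposition), $\sigma(A)$ consists of isolated eigenvalues of finite algebraic multiplicity. If $\lambda\in\sigma(A)$ with $\Re\lambda=0$, let $f\neq 0$ satisfy $Af=\lambda f$; then $\cQ_t f=e^{t\lambda}f$ for every $t>0$, so $e^{t\lambda}$ is an eigenvalue of $\cQ_t$ on the unit circle. The Perron--Frobenius conclusion forces $e^{t\lambda}=1$ for every $t>0$, hence $\lambda=0$. Moreover, any eigenfunction of $A$ at $\lambda=0$ lies in the $1$-eigenspace of $\cQ_{t_0}$, which is $\mathbb{C}\cdot\mathbf{1}$. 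This establishes the claims about $\sigma(A)$ and the $0$-eigenspace of $A$.

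Finally, for $A^*$ I would take $\varphi\equiv 1$ in the identity \eqref{bdryidnt1}, justified by a limiting argument: cut off with a family $\varphi_\epsilon$ equal to one on $[\epsilon,1-\epsilon]$ and use that $v_0$ satisfies the adjoint boundary conditions \eqref{eq:adjbc} while $f\in\Dom(A)$ satisfies \eqref{bc0A} and \eqref{bc1A}, so that the boundary terms generated by $\pa_x[x(1-x)v_0]-b(x)v_0=0$ vanish in the limit. This produces $\int_0^1(Af)(x)\,v_0(x)\,dx=0$ for every $f\in\Dom(A)$, i.e.\ the measure $v_0\,dx$ annihilates $\range(A)$, so $v_0\in\ker(A^*)$. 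Since $\cQ_{t_0}^*$ is compact with the same spectrum as $\cQ_{t_0}$, the $0$-eigenspace of $A^*$ is one-dimensional, and hence is spanned by $v_0$.

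The main technical obstacle is the transfer from $\cQ_{t_0}$ back to $A$ at the imaginary axis: one must rule out the possibility that $\sigma(A)\cap i\mathbb{R}$ contributes anything beyond isolated eigenvalues, and one must ensure that the passage $\lambda\mapsto e^{t\lambda}$ actually sends eigenvalues of $A$ to eigenvalues of $\cQ_t$ (not merely into the full spectrum). Compactness of the resolvent, already in hand, takes care of both points; without it the argument would require the more delicate spectral mapping theorems for semigroups.
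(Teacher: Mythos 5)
Your proposal is correct and follows essentially the same route as the paper, which obtains the theorem by applying Lax's Perron--Frobenius theorem (Theorem 23.1) to the compact, strictly positive operators $\cQ_t$ furnished by the kernel bound $\hq_t(x,y)>0$, together with the compact-resolvent proposition. The details you supply (constants as the fixed space of $\cQ_t$, the eigenvalue transfer $\cQ_t f=e^{t\lambda}f$ justified by compactness of the resolvent, and the identification of $v_0$ in $\ker A^*$ via the adjoint boundary conditions) are exactly the steps the paper leaves implicit in its one-line deduction.
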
 

One consequence of the theorem above is that 
\[
\lambda_1=\sup\{\Re(\lambda):\: \lambda\in\sigma(A)\setminus\{0\}\} < 0.
\]
Thus defining
\[
u(x,t)=\int\limits_0^1\hq_t(x,y)f(y)\, dy, \text{ and } c_0=\int\limits_0^1v_0(y)f(y)\, dy,
\]
where $v_0$ is normalized to have integral $1,$ then Theorem 2.1 in Section B-IV of ~\cite{1prmsmgrpspops} 
implies the
\begin{corollary} Under the hypotheses of Theorem~\ref{spcthrybnz}, for each $\delta \in (0, |\lambda_1|)$, 
there is a constant $M_{\delta}>0$ such that
\[
\|u(x,t)-c_0\|_{\cC^0}\leq M_{\delta}\|f\|_{\cC^0}e^{-\delta t}.
\]
\end{corollary}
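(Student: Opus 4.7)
The plan is to exploit the spectral decomposition of $\cC^0([0,1])$ induced by the isolated simple eigenvalue $0$ of $A$, and to reduce the statement to an exponential norm bound for the semigroup restricted to the complementary invariant subspace. Introduce the rank-one projection
\[
Pf = \left(\int_0^1 v_0(y)f(y)\, dy\right)\mathbf{1},
\]
where $\mathbf{1}$ denotes the constant function $1$ on $[0,1]$. By Theorem~\ref{spcthrybnz}, $\mathbf{1}$ spans $\ker A$ and $v_0$ spans $\ker A^*$, so $P$ is the Riesz spectral projection associated to the isolated eigenvalue $0$ of $A$. Consequently $P$ commutes with $\cQ_t$ for all $t\ge 0$, and the decomposition $\cC^0([0,1]) = \Span\{\mathbf{1}\}\oplus X_1$, with $X_1 := \ker P$, is preserved by the semigroup. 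Note that $Pf = c_0\mathbf{1}$ in the notation of the statement.

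Let $\cQ_t^1 := \cQ_t|_{X_1}$, a $\cC^0$-semigroup on $X_1$ with generator $A^1 := A|_{\Dom(A)\cap X_1}$. Its spectrum is $\sigma(A^1) = \sigma(A)\setminus\{0\}$, hence by definition of $\lambda_1$ the spectral bound satisfies
\[
s(A^1) := \sup\{\Re\lambda : \lambda \in \sigma(A^1)\} = \lambda_1 < 0.
\]
Because $\cQ_t$ is compact on $\cC^0([0,1])$ for every $t>0$ (this was established in the proof that $A$ has compact resolvent), so is $\cQ_t^1$ on $X_1$. Therefore the semigroup $\cQ_t^1$ is eventually norm-continuous and the spectral mapping theorem applies:
\[
\sigma(\cQ_t^1)\setminus\{0\} = e^{t\sigma(A^1)}.
\]
In particular the spectral radius of $\cQ_t^1$ equals $e^{t\lambda_1}$. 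Fix $\delta \in (0,|\lambda_1|)$; choose any $t_0 > 0$ and apply the Gelfand formula together with the uniform boundedness of $\cQ_t^1$ on the compact interval $[0,t_0]$ to conclude that there is a constant $M_\delta$ with
\[
\|\cQ_t^1\|_{X_1\to X_1} \leq M_\delta\, e^{-\delta t}\qquad\text{for all }t\ge 0.
\]
This is the content of Theorem~2.1 in Section~B-IV of~\cite{1prmsmgrpspops}, whose hypotheses are supplied by Theorem~\ref{spcthrybnz} together with the compactness of $\cQ_t$ for $t>0$. Writing $f = c_0\mathbf{1} + f_1$ with $f_1 \in X_1$ and $\|f_1\|_{\cC^0} \leq (1+\|P\|)\|f\|_{\cC^0}$, and using $\cQ_t \mathbf{1} = \mathbf{1}$, we obtain
\[
\|u(\cdot,t) - c_0\|_{\cC^0} = \|\cQ_t^1 f_1\|_{\cC^0} \leq M_\delta(1+\|P\|)\|f\|_{\cC^0}\, e^{-\delta t},
\]
which yields the claimed inequality after renaming the constant.

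The principal obstacle is upgrading the spectral bound on $A^1$ to a genuine norm bound on $\cQ_t^1$: for general $\cC^0$-semigroups the growth bound can strictly exceed the spectral bound, so one cannot simply deduce exponential decay from the fact that $\sigma(A^1) \subset \{\Re\lambda \leq \lambda_1\}$. What makes the argument work here is the compactness of $\cQ_t$ for every $t>0$, which forces eventual norm-continuity and hence the spectral mapping theorem, combined with the fact that $0$ is an isolated simple eigenvalue of $A$, which makes the Riesz projection $P$ a bounded operator and thus gives access to the complementary $\cQ_t$-invariant subspace $X_1$ on which $\lambda_1$ actually controls the growth.
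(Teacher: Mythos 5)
Your proof is correct, and it is worth noting how it relates to the paper's: the paper disposes of this corollary in one line by citing Theorem~2.1 of Section B-IV of the positive-semigroup reference, whose hypotheses are exactly what Theorem~\ref{spcthrybnz} and the compactness of $\cQ_t$ provide, whereas you reconstruct the content of that citation by hand — Riesz projection at the isolated eigenvalue $0$, invariance of the complementary subspace $X_1$, the spectral mapping theorem for the (immediately compact, hence norm-continuous for $t>0$) semigroup, and Gelfand's formula to convert the spectral bound $\lambda_1$ into a growth bound $-\delta$. Your closing remark correctly identifies the crux: compactness is what closes the possible gap between spectral bound and growth bound, which is precisely what the cited theorem exploits, so the two routes buy the same thing, yours being self-contained and the paper's being shorter. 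Two small points you should make explicit to finish the argument. First, the formula $Pf=\bigl(\int_0^1 v_0(y)f(y)\,dy\bigr)\mathbf{1}$ is the Riesz projection only if the pole of the resolvent at $0$ is simple, i.e.\ there is no generalized eigenvector; this follows either by pairing a putative Jordan chain against $v_0$ (if $Au=c\mathbf{1}$ with $u\in\Dom(A)$, then $c=c\langle \mathbf{1},v_0\rangle=\langle Au,v_0\rangle=\langle u,A^*v_0\rangle=0$, using that $v_0$ has integral $1$), or from the Hille--Yosida contraction bound $\|(\lambda-A)^{-1}\|\le 1/\Re\lambda$, which forbids a pole of order $\ge 2$ at $\lambda=0$; without this, $P$ need not be a projection and $X_1$ need not be spectrally complementary. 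Second, you use $\cQ_t\mathbf{1}=\mathbf{1}$; this holds because constants solve $(\pa_t-L)u=0$ with the regularity~\eqref{WFregcnds}, and uniqueness via the maximum principle identifies them with $\cQ_t\mathbf{1}$. With those two sentences added, your argument is complete and fully rigorous.
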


\subsection*{The case where either $b_0 = 0$ or $b_1 = 0$}
We now turn to the characterization of $\Dom(A)$, and the corresponding decay results for solutions,
when either $b_0$ or $b_1$ (or both) vanish. 

If $b_0=0,$ then by Proposition~\ref{p.infgen}, $Af(0)=0,$ for every $f\in\Dom(A).$  This in turn implies that 
$\delta(y)\in\Dom(A^*),$ and that $A^*\delta(y)=0.$ This is a new feature, since if $b_0 \neq 0$, then clearly
$\delta(y)\notin\Dom(A^*)$, and it is what complicates the discussion.  
Similar remarks apply at $x=1$ if $b_1=0$, of course.
 
More generally, if $d\mu\in\Dom(A^*)$ and $A^*d\mu=0,$ then by elliptic regularity in the open interval $(0,1)$, the 
measure $d\mu$ has the representation $g(y)dy$ where $g$ satisfies $L^t g = 0$ and the adjoint boundary 
conditions,~\eqref{eq:adjbc}, 
\begin{equation}\label{zeroflx2}
\pa_x[x(1-x)g(x)]-b(x)g(x)=0,
\end{equation}
so in particular $g \in \calC^\infty((0,1))$. 
Writing $b(x)=b_0(1-x)-b_1x+ x(1-x)\tb(x)$, then solutions to~\eqref{zeroflx2} have the form:
\begin{equation}\label{zeroflx3}
g(x)=C\frac{e^{-\tB(x)}}{x^{1-b_0}(1-x)^{1-b_1}},
\end{equation}
where $C$ is a constant and $\tB$ is a primitive of $\tb.$ Clearly, $g(x)dx$ is a measure of 
finite total variation if and only if both $b_0,\, b_1>0.$ 

If both $b_0 = b_1 =0,$ then $b(x)=x(1-x)\tb(x),$ where $\tb\in\CI([0,1]).$ The functions,
\begin{equation}\label{Anullsp00}
u_0(x)=C\int\limits_0^x\exp\left[-\int\limits_0^y\tb(z)dz\right]\,dy
\end{equation}
are strictly monotonically increasing in $(0,1),$ and solve $Lu=0.$  Choosing $C>0$ 
appropriately, we can assume that
\begin{equation}\label{Anullsp01}
u_0(0)=0\text{ and }u_0(1)=1.
\end{equation}
If $b_0 =0,$ but $b_1 > 0,$ then there is a non-negative solution $u_0,$ such that
\[
u_0(0) = 0, \qquad \text{and} \qquad \lim_{x\to 1^-}(1-x)^{b_1}\pa_xu_0(x)\neq 0,
\]
while f $b_0>0$, $b_1 = 0$, then there is a non-negative solution $u_0(x)$ with
\[
u_0(1)=0, \qquad \text{and} \qquad  \lim_{x\to 0^+}x^{b_0}\pa_xu_0(x)\neq 0.
\]

\medskip

If either $b_0$ or $b_1$ vanish, then the solution operator to~\eqref{genWFivp} can be expressed
in a form analogous to~\eqref{k0decmp}.   Suppose first that $b$ vanishes at only one endpoint, say
$b(0)=0,$ but $b(1)<0.$  We can use the kernel $k^{0,D}_t$ to build a solution operator, $\hq_t^{D}$,
for the Dirichlet problem at $x=0$ with the regular boundary condition at $x=1$: 
\[
\hq_t(x,y)=\hq_t^D(x,y)+\delta(y)c_0(x,t),
\]
where $(\pa_t-L)c_0(x,t)=0$. As before, $\hq_t^D(x,y)>0$ for $x>0$, and moreover,
\[
c_0(x,t)=1-\int\limits_0^1\hq_t^D(x,y)\,dy,
\]
so $c_0 \geq 0$.  A similar argument works if $b(0)>0,$ but $b(1)=0.$ Finally, if $b(0) = b(1) = 0$, then 
we can write
\begin{equation}\label{kersplt1}
\hq_t(x,y)=\hq_t^D(x,y)+\delta(y)c_0(x,t)+\delta(1-y)c_1(x,t),
\end{equation}
where $\hq^D_t(x,y)$ is positive on $(0,1)\times (0,1)$ and
\[
\begin{split}
c_0(x,t)+c_1(x,t)&=1-\int\limits_0^1\hq_t^D(x,y)\, dy,\\ c_1(x,t)&=1-\int\limits_0^1\hq_t^D(x,y)u_0(y)\, dy.
\end{split}
\]

In each of these cases, $\cC^0([0,1])$ splits into a finite dimensional
subspace, invariant under $\cQ_t,$ and an infinite dimensional complement, also
invariant under $\cQ_t.$ For example, if $b(0)=b(1)=0,$ then
\begin{equation}\label{Dsplit}
\cC^0([0,1])=\cC^0_0([0,1])\oplus\Span\{1,u_0\}.
\end{equation}
In all cases there is a corresponding splitting of the semi-group into the
Dirichlet semi-group, $\cQ^D_t,$ and a semi-group on a finite dimensional
space. The infinitesimal generator $A^D$ of $\cQ^D_t$ is the graph closure of
$L$ on the set of functions in $\cC^2([0,1])$ which vanish at the appropriate
end-point, or -points. The fact that $\hq^D_t$ is positive for $0<x,$ or $x<1,$ or
$0<x<1,$ respectively, implies as before that the semi-groups $\cQ^D_t$ are
positive and irreducible.

The next proposition follows from known results about positive, irreducible,
compact semi-groups acting on $\cC^0_0(X),$ where $X=(0,1),$ $(0,1],$ or
$[0,1),$ see~\cite{1prmsmgrpspops}.
\begin{proposition} If either or both of the numbers $b(0)$, $b(1)$ vanish,
  then $A^D$ is compact.  There is an element $\lambda_1\in\sigma(A^D)$ with
  $\lambda_1 \in (-\infty,0)$ and a unique corresponding eigenfunction $u_1$
  which is smooth and positive in $(0,1).$ The remainder of the spectrum lies
  in $\Re\lambda<\lambda_1-\eta,\,$ for some $\eta>0.$
\end{proposition}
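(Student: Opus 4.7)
The plan is to verify the three abstract hypotheses needed for the Perron--Frobenius/Krein--Rutman theorem in the positive semi-group framework of~\cite{1prmsmgrpspops}, and then read off all three assertions from that theorem.

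First I would establish compactness of $A^D$, or equivalently, of $\cQ^D_t$ for each $t>0$. This follows verbatim from the argument used for the full generator $A$: the kernel $\hq^D_t(x,y)$ is obtained from $\hq_t$ by subtracting Dirac masses at $0$ and/or $1$ (equation~\eqref{kersplt1}), so by the regularity already proved for $\hq_t$, $\cQ^D_t$ maps $\cC^0_0(X)$ into a bounded subset of $\cC^1(X)$ that behaves like the model kernels $k^{0,D}_t$ near the Dirichlet endpoints; Arzela--Ascoli then gives compactness. Consequently the resolvent $(\lambda - A^D)^{-1}$ is compact wherever it exists, so $\sigma(A^D)$ consists of isolated eigenvalues of finite multiplicity.

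Next I would check positivity and irreducibility. Positivity of $\cQ^D_t$ on $\cC^0_0(X)$ is inherited from the pointwise nonnegativity of $\hq^D_t$ established in the construction. Strict positivity of $\hq^D_t(x,y)$ for $(x,y)\in(0,1)\times(0,1)$ and $t>0$ is a consequence of the sharp parabolic maximum principle applied in the open interval, exactly as in the proof of the earlier proposition (with $b_0,b_1\ne 0$): a nonnegative, not identically zero initial datum supported in the interior produces an interior solution that is strictly positive throughout $(0,1)\times(0,\infty)$. This strict interior positivity is precisely the irreducibility of $\cQ^D_t$ on $\cC^0_0(X)$ in the sense of~\cite{1prmsmgrpspops}.

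Having verified that $\cQ^D_t$ is a positive, irreducible, compact $\cC_0$-semi-group on $\cC^0_0(X)$, I would invoke the Perron--Frobenius theorem for such semi-groups (Theorem~C-III of~\cite{1prmsmgrpspops}): the spectral bound $\lambda_1 = s(A^D)$ is an eigenvalue of $A^D$, it is algebraically simple, admits a strictly positive eigenfunction $u_1\in\cC^0_0(X)$ (smooth in $(0,1)$ by elliptic regularity applied to $L u_1 = \lambda_1 u_1$), and the remainder of $\sigma(A^D)$ satisfies $\Re\lambda < \lambda_1$. Because the spectrum is discrete and the semi-group is compact, $\lambda_1$ is isolated and so there is a gap $\eta>0$ with $\Re\lambda \le \lambda_1 - \eta$ for $\lambda\in\sigma(A^D)\setminus\{\lambda_1\}$.

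The only remaining point, which I expect to be the main obstacle, is the strict inequality $\lambda_1<0$. Here I would argue that $\cQ^D_t$ is a proper sub-semi-group of $\cQ_t$: the decomposition~\eqref{kersplt1} together with the explicit identities for $c_0(x,t)$ and $c_1(x,t)$ shows that $\int_0^1 \hq^D_t(x,y)\,dy = 1 - c_0(x,t) - c_1(x,t)$, and the strict positivity of the boundary terms $c_0,c_1$ (from the model-kernel analysis near $y=0,1$, since mass genuinely leaks into the $\delta$-components for any $t>0$) gives $\|\cQ^D_t\mathbf{1}\|_{\infty}<1$. Combined with positivity this forces the spectral radius $r(\cQ^D_t)<1$, hence $\lambda_1 = t^{-1}\log r(\cQ^D_t)<0$. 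This completes the proof.
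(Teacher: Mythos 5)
Your overall skeleton --- compactness of $\cQ^D_t$, positivity and irreducibility from the interior positivity of $\hq^D_t$, then the Perron--Frobenius machinery of~\cite{1prmsmgrpspops} for simplicity of the leading eigenvalue and the spectral gap --- coincides with what the paper does for those parts (the paper likewise quotes compactness of $\cQ^D_t$ and Corollary 2.2 in Section B-IV of~\cite{1prmsmgrpspops}). Where you genuinely diverge is the heart of the matter, the existence of $u_1$ and the strict negativity of $\lambda_1$: the paper obtains these by one-dimensional ODE methods, rewriting the eigenvalue problem in Sturm--Liouville form and using oscillation and comparison theorems (comparison with operators $M\pa_x^2+\tfrac{m\mu}{x(1-x)}$ when $b(0)=b(1)=0$), whereas you propose to get $\lambda_1<0$ ``softly'' from strict sub-Markovianity, $\|\cQ^D_t\|<1$, via the mass that leaks into the $\delta$-components in~\eqref{kersplt1}. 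Your route avoids Sturm--Liouville theory entirely (and so would generalize beyond one dimension), while the paper's route is elementary and produces $\lambda_1<0$ and $u_1$ directly without any quantitative information about $c_0,c_1$.

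However, as written your key step has a real gap. From the model-kernel analysis you only know that mass leaks into $\delta(y)$ (resp.\ $\delta(1-y)$) for $x$ near the corresponding endpoint; the paper itself only establishes $c_0,c_1\geq 0$, and the sharper statement~\eqref{atmasymp} is derived \emph{after} this proposition and uses $\lambda_1<0$, so it cannot be invoked here. To conclude $\|\cQ^D_t\mathbf{1}\|_\infty<1$ you need $\inf_{x\in[0,1]}\bigl(c_0(x,t)+c_1(x,t)\bigr)>0$, i.e.\ strict positivity of the absorbed mass at \emph{every} $x$, uniformly. This requires an extra propagation argument --- for instance $c_0(x,t)\geq\int_0^1\hq^D_{t/2}(x,z)\,c_0(z,t/2)\,dz>0$ using the interior positivity of $\hq^D_{t/2}$ and positivity of $c_0(\cdot,t/2)$ near $0$, or the strong parabolic minimum principle applied to the nonnegative solution $c_0$ which equals $1$ at $x=0$ --- followed by continuity of $c_0+c_1$ on the compact interval to get a uniform lower bound. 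You should also note that the passage from $r(\cQ^D_t)<1$ to $\lambda_1=s(A^D)<0$ uses the spectral mapping theorem, which is legitimate here because $\cQ^D_t$ is compact for $t>0$ (and de Pagter's theorem, contained in the cited reference, guarantees $r(\cQ^D_t)>0$ so that the spectrum is nonempty). With these points supplied your argument closes; without them the inequality $\|\cQ^D_t\|<1$ is not justified.
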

\begin{proof} The compactness follows from the fact that $\cQ^D_t$ is compact
  for every $t>0.$ The existence of the eigenfunction $u_1$ and the negativity
  of $\lambda_1$ is obtained by applying oscillation and comparison theorems
  for Sturm-Liouville operators.  When $b(0)=b(1)=0$, comparison with operators
  of the form $M\pa_x^2+\frac{m\mu}{x(1-x)}$ yields the existence of
  $\lambda_1< 0$ and a unique associated eigenfunction $u_1 \in \Dom(A)$ with
  $u_1 > 0$ in $(0,1).$  The cases where only one of $b(0)$ or $b(1)$ vanish are somewhat
  easier. If $b(0)=0, b(1)<1,$ then the eigenvalue problem can written as
\[
\begin{split}
\pa_x[(1-x)^{b_1}e^{\tB(x)}\pa_xu]+\frac{\mu  e^{\tB(x)u(x)}}{x(1-x)^{1-b_1}}=0,\\
u(0)=0\text{ and }\lim_{x\to 1^-}(1-x)^{b_1}\pa_xu(x)=0.
\end{split}
\]
Since $(1-x)^{b_1-1}$ is integrable near $x=1$, we can apply standard
oscillation theorems to obtain the desired conclusion. The result then follows
from Corollary 2.2 in Section B-IV of ~\cite{1prmsmgrpspops} and the fact that
$A^D$ has a compact resolvent.
\end{proof}

If $b(0)=b(1)=0,$ then any $f \in \cC^0$ can be decomposed, according to ~\eqref{Dsplit}, as
\[
f=f_0+[f(0)+(f(1)-f(0))u_0],
\]
and then
\begin{equation}\label{grpsplt1}
\cQ_tf=\cQ^D_tf_0+[f(0)+(f(1)-f(0))u_0].
\end{equation}
The proposition above and Theorem 2.1, in Section B-IV of~\cite{1prmsmgrpspops}
imply that there is a constant, independent of $f$ so that
\[
\|\cQ^D_tf_0\|_{\cC^0}\leq Ce^{\lambda_1 t}\|f\|_{\cC^0}.
\]
Comparing the representations of $\cQ_tf$ in~\eqref{kersplt1}
and~\eqref{grpsplt1} shows that
\begin{equation}\label{atmasymp}
c_0(x,t)=1-u_0(x)+O(e^{\lambda_1t})\text{ and }  c_1(x,t)=u_0(x)+O(e^{\lambda_1t}).
\end{equation}
There are similar results for the other two cases, where only one of $b(0)$ or
$b(1)$ vanishes. In this case the solution tends asymptotically to the constant
$f(0),$ if $b(0)=0,$ and $f(1),$ if $b(1)=0.$

\section{The Resolvent of $A$}\label{s.resolv}
The Hille-Yosida theorem states that if $A$ is the infinitesimal
generator of a contraction semi-group $\cQ_t,$ then the right half plane
belongs to the resolvent set of $A.$ For $\lambda$ with positive real part the
resolvent, $(\lambda-A)^{-1},$ is given by the Laplace transform of $\cQ_t:$
\begin{equation}
  (\lambda-A)^{-1}=\int\limits_{0}^{\infty}e^{-\lambda t}\cQ_tdt.
\end{equation}
If $A$ is the $\cC^0$-graph closure of a generalized Wright-Fisher operator,
$L,$ then $\cQ_t,$ defined above, is a contraction semi-group on $\cC^0([0,1]),$
and therefore the right half-plane is in $\rho(A).$ In this section we consider
the higher order regularity of solutions to $(\lambda-A)w=f.$

Our regularity results show that if $f\in\cC^m([0,1])$ then the solution, $u$
to~\eqref{genWFivp} satisfies
\begin{equation}
  u\in\cC^0([0,\infty);\cC^m([0,1]))\cap \cC^{\infty}([0,1]\times (0,\infty)).
\end{equation}
We can therefore differentiate the equation satisfied by $u$ with respect to
$x$ to obtain that, for $1\leq j\leq m,$ and $t>0,$
\begin{equation}
  \pa_t[\pa_x^ju]=x(1-x)\pa_x^2[\pa_x^ju]+(b(x)+j(1-2x))\pa_x [\pa_x^ju]+c_j(x)[\pa_x^ju],
\end{equation}
for a function $c_j\in\CI([0,1]).$ The operator
\begin{equation}
  L_{[j]}=x(1-x)\pa_x^2+(b(x)+j(1-2x))\pa_x,
\end{equation}
is a generalized Wright-Fisher operator.  Applying a standard extension of the
maximum principle to equations with a zero order term, see Theorem 4 in Chapter
3.3 of~\cite{ProtterWeinberger}, we easily obtain
\begin{proposition}\label{highderest3}
  For $m\in\bbN,$ there are constants $C_m, \mu_m\geq 0,$ so that for $f\in
  \cC^m([0,1]),$ the solution to~\eqref{genWFivp} satisfies
  \begin{equation}\label{mdervest}
    \|u(\cdot,t)\|_{\cC^m([0,1])}\leq C_me^{\mu_m t}\|f\|_{\cC^m([0,1])}.
  \end{equation}
\end{proposition}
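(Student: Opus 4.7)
The plan is to follow the setup already outlined in the excerpt: differentiate the heat equation $\pa_t u = Lu$ in $x$ to obtain equations for each $v_j := \pa_x^j u$, and apply a maximum principle with bounded zero-order term to each of these derivatives. The regularity $u\in\cC^0([0,\infty);\cC^m([0,1]))\cap\cC^\infty([0,1]\times(0,\infty))$ stated in the excerpt justifies these manipulations for $0\le j\le m$, yielding the PDE
\begin{equation*}
\pa_t v_j = L_{[j]} v_j + c_j(x) v_j + \sum_{k<j} d_{jk}(x) v_k,
\end{equation*}
where the lower-order terms $d_{jk}$ arising from iterated commutation of $\pa_x$ with $L$ are smooth on $[0,1]$. (The excerpt folds these into a single $c_j(x)v_j$ term; either formulation suffices for the argument below.)

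The first key observation is that $L_{[j]}$ is itself a generalized Wright-Fisher operator: its first-order coefficient $\tilde b_j(x) := b(x) + j(1-2x)$ satisfies $\tilde b_j(0) = b(0) + j \geq 0$ and $\tilde b_j(1) = b(1) - j \leq 0$, so Proposition~1 applies to it verbatim. I would then prove inductively on $j$ that
\begin{equation*}
\|v_j(\cdot,t)\|_{\infty} \leq C_j e^{\mu_j t}\|f\|_{\cC^j([0,1])},
\end{equation*}
with $C_j,\mu_j$ independent of $f$. The base case $j=0$ is given directly by Proposition~1. For the inductive step, the lower-order terms $\sum_{k<j}d_{jk}(x)v_k$ are already controlled by the inductive hypothesis, so the equation takes the form $\pa_t v_j = L_{[j]}v_j + c_j(x) v_j + g_j(x,t)$ with $\|g_j(\cdot,t)\|_\infty \leq \mathrm{const}\cdot e^{\mu_{j-1} t}\|f\|_{\cC^{j-1}}$. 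Substituting $w_j := e^{-\mu t}v_j$ for $\mu$ large enough that $c_j(x) - \mu \leq 0$ converts this into a parabolic equation with nonpositive zero-order coefficient and bounded inhomogeneity, to which the cited extension of the maximum principle (Theorem 4 of Ch.~3.3 of \cite{ProtterWeinberger}) applies, giving the $\cC^0$ bound on $v_j$. Summing the estimates over $j=0,\ldots,m$ produces \eqref{mdervest}.

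The main obstacle, which is more bookkeeping than conceptual, is verifying that the boundary step of the maximum principle goes through for $L_{[j]}$ with a bounded zero-order term. The required regularity condition $\lim_{x\to 0^+}x(1-x)\pa_x^2 v_j(x,t) = 0$ analogous to \eqref{WFregcnds} holds because $v_j\in\cC^\infty([0,1])$ for each $t>0$; the inward-pointing drift of $L_{[j]}$ is strictly stronger than that of $L$ (by $+j$ at the left and $-j$ at the right endpoint), so the no-boundary-maximum argument from Proposition~1 applies after adding the standard perturbation $\ep(1+t)^{-1}$ to make the candidate extremal a strict subsolution. Once that is checked, the inductive estimate and the summation complete the proof; the constants $C_m$ and $\mu_m$ can be read off explicitly in terms of $\|c_j\|_\infty,\|d_{jk}\|_\infty$, which in turn are polynomial expressions in $b,b',\ldots,b^{(m)}$.
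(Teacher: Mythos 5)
Your argument is correct and is essentially the paper's own proof: differentiate the equation $j$ times, note that $L_{[j]}$ is again a generalized Wright--Fisher operator since $b(0)+j\geq 0$ and $b(1)-j\leq 0$, and invoke the Protter--Weinberger extension of the maximum principle for equations with a zero-order term; your explicit induction on $j$, treating the commutator terms $d_{jk}v_k$ as an inhomogeneity controlled by the previous step, is just a more careful rendering of the bookkeeping the paper compresses into the single $c_j(x)\pa_x^j u$ term. (The only quibble is terminological: the perturbation $\ep(1+t)^{-1}$ should make the function a strict supersolution in the paper's sign convention, not a subsolution.)
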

Combined with the continuity result in Theorem~\ref{bscexstc} this shows that
$\cQ_t$ defines a semi-group on $\cC^m([0,1])$ with $\|\cQ_t\|_{\cC^m}\leq
C_me^{\mu_m t}.$ We let the infinitesimal generators be denoted by $A_m.$  By
Nelson's theorem, these  can be taken as the $\cC^m$-graph-norm closure of $L$
acting on $\CI([0,1]).$ As $\cQ_t$ is a compact operator on $\cC^m([0,1])$ for
$t>0,$ the operators $A_m$ are compact for all $m\in\bbN.$

Evidently the resolvent set of $A_m$ contains the half-plane
$\{\Re\lambda>\mu_m\}.$ Because $A_m$ is a compact operator, if
$\lambda\in\sigma(A_m),$ then there is an eigenvector $u\in\Dom(A_m)$ so that
\begin{equation}
  Lu=\lambda u.
\end{equation}
Because $\cQ_t u=e^{\lambda t}u\in\CI([0,1]),$ we see that the eigenvectors all
belong to $\CI([0,1]),$ and therefore an eigenvector of $A_m$ is also an
eigenvector of $A,$ and vice versa. This shows that, as a point-set,
\begin{equation}\label{specm}
\sigma(A_m)=\sigma(A)\subset\{\lambda:\Re\lambda\leq 0\}.
\end{equation}
 
\begin{theorem} If $m\in\bbN,$ then for $f\in\cC^m([0,1])$ and $\lambda,$ with
  positive real part, the solution $w\in\Dom(A)$ to the equation
  \begin{equation}
    (\lambda-L)w=f,
  \end{equation}
belongs to $\cC^m([0,1])$ and if $\Re \lambda >\mu_m,$ then
\begin{equation}\label{lrglamest}
  \|w\|_{\cC^m}\leq \frac{\|f\|_{\cC^m}}{|\lambda-\mu_m|}.
\end{equation}
\end{theorem}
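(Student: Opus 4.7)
The plan is to view $\cQ_t$ as a strongly continuous semi-group on $\cC^m([0,1])$ (not merely on $\cC^0$) and apply Hille--Yosida on this space. Proposition~\ref{highderest3} gives $\|\cQ_t\|_{\cC^m\to\cC^m}\leq C_m e^{\mu_m t}$, and together with the $\cC^m$-continuity at $t=0^+$ recorded in Theorem~\ref{bscexstc}, this makes $(\cQ_t)_{t\geq 0}$ a strongly continuous semi-group on $\cC^m$. Its infinitesimal generator $A_m$ is, by Nelson's theorem applied to the $\cQ_t$-invariant dense subspace $\CI([0,1])\subset \cC^m$, the $\cC^m$-graph closure of $L|_{\CI([0,1])}$. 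In particular $A_m\subset A$, and by \eqref{specm} both resolvent sets contain the half-plane $\{\Re\lambda>\mu_m\}$.

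For $\Re\lambda>\mu_m$, the standard Hille--Yosida formula for the resolvent of the $\cC^m$-semi-group yields
\[
(\lambda-A_m)^{-1}f=\int_0^{\infty} e^{-\lambda t}\cQ_t f\,dt,
\]
the integral converging absolutely in the $\cC^m$-norm by the exponential bound above. I would then identify this with the given $w=(\lambda-A)^{-1}f\in\Dom(A)$: since both sides solve $(\lambda-L)w=f$ and both lie in $\Dom(A)$ with $\lambda\in\rho(A)$, uniqueness of the $\cC^0$-resolvent forces $w=(\lambda-A_m)^{-1}f$, hence $w\in\cC^m([0,1])$.

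The quantitative estimate then comes from the Bochner integral: directly,
\[
\|w\|_{\cC^m}\leq \int_0^{\infty} e^{-(\Re\lambda)t}\|\cQ_t f\|_{\cC^m}\,dt\leq \frac{C_m\,\|f\|_{\cC^m}}{\Re\lambda-\mu_m}.
\]
To eliminate the constant $C_m$ and recover the clean form $1/|\lambda-\mu_m|$ in the statement, I would renorm $\cC^m$ by
\[
|||f|||:=\sup_{t\geq 0}e^{-\mu_m t}\|\cQ_tf\|_{\cC^m},
\]
which is equivalent to $\|\cdot\|_{\cC^m}$ (upper bound from Proposition~\ref{highderest3}, lower bound from $t=0$) and turns $\cQ_t$ into a quasi-contraction: $|||\cQ_t|||\leq e^{\mu_m t}$. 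The Hille--Yosida inequality for quasi-contractions then gives $|||(\lambda-A_m)^{-1}|||\leq (\Re\lambda-\mu_m)^{-1}$, which translates into the desired bound.

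The main obstacle, and the point I expect to require the most care, is the identification $A_m\subset A$ via Nelson's theorem: one must verify that $\CI([0,1])$ is genuinely a core in the $\cC^m$-graph norm, which requires knowing not only that $\cQ_t$ preserves $\cC^m$ quantitatively, but also that $\cQ_t$ maps $\cC^m$ into $\CI([0,1])$ for $t>0$ (a smoothing property already established in Section~\ref{s.infgen}). Once the two generators (and hence the two resolvents on $\cC^m$) are matched, the norm estimate is a formal consequence of the semi-group Laplace transform.
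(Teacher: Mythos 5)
Your identification of the $\cC^0$-resolvent with the $\cC^m$-resolvent is exactly the paper's mechanism, and your check that $A_m\subset A$ (so that uniqueness of $(\lambda-A)^{-1}$ forces $w=(\lambda-A_m)^{-1}f$) is sound. The gap is the range of $\lambda$: your Laplace-transform argument produces $w\in\cC^m$ only for $\Re\lambda>\mu_m$, whereas the theorem asserts $\cC^m$-regularity for \emph{every} $\lambda$ with positive real part. Since the constant $\mu_m$ of Proposition~\ref{highderest3} need not vanish for $m\geq 1$ (the differentiated equations acquire zero-order terms $c_j$), the strip $0<\Re\lambda\leq\mu_m$ is a genuinely missing case, and there the Bochner integral $\int_0^\infty e^{-\lambda t}\cQ_tf\,dt$ does not converge in $\cC^m$, so nothing in your argument applies. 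The paper closes this by using \eqref{specm} at full strength: because $\cQ_t$ smooths, the eigenvectors of $A_m$ and of $A$ coincide, so $\sigma(A_m)=\sigma(A)\subset\{\lambda:\Re\lambda\leq 0\}$ and hence $\rho(A_m)\supset\{\lambda:\Re\lambda>0\}$; then $(\lambda-A_m)^{-1}f$ is an analytic $\cC^m$-valued function on the whole right half-plane which agrees with the analytic $\cC^0$-valued function $(\lambda-A)^{-1}f$ on $\{\Re\lambda>\mu_m\}$, hence on all of $\{\Re\lambda>0\}$ by unique continuation, giving $w\in\cC^m$ there as well. You do cite \eqref{specm}, but only to conclude that both resolvent sets contain $\{\Re\lambda>\mu_m\}$ --- which Hille--Yosida already gives --- rather than to extend the $\cC^m$-valued resolvent across the strip, which is the one place it is really needed.

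A secondary remark: the renorming $|||f|||=\sup_{t\geq 0}e^{-\mu_m t}\|\cQ_tf\|_{\cC^m}$ does make $\cQ_t$ quasi-contractive and yields $|||w|||\leq |||f|||/(\Re\lambda-\mu_m)$, but translating back to the original norm reinstates the constant, giving only $\|w\|_{\cC^m}\leq C_m\|f\|_{\cC^m}/(\Re\lambda-\mu_m)$; so \eqref{lrglamest} as literally stated (no constant, and with $|\lambda-\mu_m|\geq\Re\lambda-\mu_m$ in the denominator) is not actually recovered this way. The paper's own proof is no more explicit on this point --- it simply invokes Hille--Yosida --- so treat this as a shared imprecision; the substantive defect in your proposal is the untreated strip $0<\Re\lambda\leq\mu_m$.
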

\begin{proof} The Hille-Yosida theorem shows that the estimate~\eqref{mdervest}
  implies that $\{\lambda:\Re\lambda>\mu_m\}$ belongs to the resolvent set of
    $A_m.$ For $f\in\cC^0$  we set
    \begin{equation}
      (\lambda-A)^{-1}f=\int\limits_0^{\infty}e^{-\lambda t}\cQ_tfdt.
    \end{equation}
    This is an analytic $\cC^0$-valued function in $\{\lambda:\:
    \Re\lambda>0\}.$ If $f\in\cC^m,$ then in $\{\lambda:\: \Re\lambda>\mu_m\},$
    this equals $(\lambda-A_m)^{-1}f,$ which is an analytic $\cC^m$-valued
    function in, $\rho(A_m),$ the resolvent set of $A_m.$ As noted
    in~\eqref{specm}, $\rho(A_m),$ includes $\{\lambda:\:
    \Re\lambda>0\},$ which completes the proof of the theorem.
\end{proof}

This is not, in any real sense, an elliptic estimate, as it only shows that the
solution $w$ is at least as regular as $f.$ Of
course in the interior of the interval, $w$ has two more derivatives than $f,$
but this may not be true, in a uniform sense, up to the boundary.

\section{The adjoint semi-group}\label{adjsmgrp}
It is a consequence of the boundary behavior of $\hq_t,$ which follows from
Corollary~\ref{kernest1}, that for $t>0,$ and each $x\in [0,1],$
\begin{equation}\label{frmadj}
L_y^t\hq_t(x,\cdot)\in L^1([0,1])
\end{equation}
The results in Section~\ref{htker4L} show that, if $f\in \cC^m([0,1])$ for $m \geq 0,$ then
\[
u(x,t)=\int\limits_0^1\hq_t(x,y)f(y) \, dy \in \cC^0([0,\infty)_t;\cC^m([0,1]_x))
\]
and satisfies $\pa_tu=Lu$ with $u(x,0)=f(x)$. Thus for $f\in\cC^2([0,1]),$
uniqueness for this initial value problem shows that, for $t>0,$
\[
Lu(x,t)=\int\limits_0^t\hq_t(x,y)Lf(y)dy.
\]
By Corollary~\ref{kernest1}, $\hq_t$ satisfies the adjoint boundary
conditions~\eqref{eq:adjbc}, so the integrability of $L_y^t\hq_t,$ implies that
we can integrate by parts to conclude that
\[
Lu(x,t)=\int\limits_0^tL_y^t\hq_t(x,y)f(y)\,dy.
\]
Since 
\[
\pa_t u=\int\limits_0^1\pa_t\hq_t(x,y)f(y)\, dy,
\]
it follows immediately that, for all $f\in\cC^2([0,1])$ and $t>0,$ 
\[
\int\limits_0^t(\pa_t-L_y^t)\hq_t(x,y)f(y)\, dy=0,
\]
and hence we conclude by a straightforward limiting argument the following:
\begin{theorem}\label{frdeqn}
If $g\in\cC^0([0,1]),$ then, for $t>0$
\[
v(y,t)=\int\limits_{0}^{1}\hq_t(x,y)g(x)\, dx,
\]
satisfies the boundary conditions~\eqref{eq:adjbc}, and solves the initial value problem
\begin{equation}\label{foreqn}
 (\pa_t-L^t)v(y,t)=0\text{ and }\lim_{t\to 0^+}v(y,t)=g(y).
\end{equation}
\end{theorem}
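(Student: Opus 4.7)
The plan is to assemble the theorem from three ingredients: (i) the weak identity $\int_0^1(\partial_t-L_y^t)\hq_t(x,y)f(y)\,dy=0$ for $f\in\cC^2([0,1])$ established in the paragraph preceding the statement; (ii) the regularity and boundary structure of $\hq_t$ provided by Corollary~\ref{kernest1}; and (iii) duality with the forward semigroup $\cQ_t$.

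First, I would promote the weak identity to a pointwise identity on the kernel. Since $\hq_t(x,\cdot)$ is smooth on $(0,1)$ for $t>0$ by Corollary~\ref{kernest1}, vanishing of the integral against every $\cC^2$ test function forces $(\partial_t-L_y^t)\hq_t(x,y)=0$ for each $x\in[0,1]$ and $(y,t)\in(0,1)\times(0,\infty)$. The same corollary supplies uniform-in-$x$ bounds on $\partial_t\hq_t$, $\partial_y\hq_t$, and $\partial_y^2\hq_t$ on compact subsets of $(0,1)\times(0,\infty)$, so dominated convergence permits differentiation under the integral in $v(y,t)=\int_0^1\hq_t(x,y)g(x)\,dx$. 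This yields $(\partial_t-L^t)v(y,t)=0$ on $(0,1)\times(0,\infty)$.

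Next, I verify the adjoint boundary conditions \eqref{eq:adjbc} by passing the $y$-limit through the $x$-integral. Writing $\hq_t(x,y)=y^{b_0-1}(1-y)^{b_1-1}\hq_t^{\reg}(x,y)$ with $\hq_t^{\reg}$ smooth up to the boundary (Corollary~\ref{kernest1}) and expanding, one finds $\partial_y(y\hq_t)-b(0)\hq_t=y^{b_0}\partial_y[(1-y)^{b_1-1}\hq_t^{\reg}]$, which tends to zero as $y\to 0^+$ uniformly in $x\in[0,1]$ when $b_0>0$; the case $b_0=0$ uses instead the decomposition \eqref{kersplt1}. The analogous computation at $y=1$ is identical. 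Integrating against $g$ and invoking dominated convergence transfers the adjoint boundary conditions to $v$.

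The main obstacle is the initial condition $\lim_{t\to 0^+}v(y,t)=g(y)$, as the singular weight $y^{b_0-1}(1-y)^{b_1-1}$ obstructs a straightforward uniform argument. The cleanest route is via duality with the forward semigroup: for every $\phi\in\cC^0([0,1])$, Fubini (justified by integrability of the singular weight in $y$ together with boundedness of $\hq_t^{\reg}$ in $x$) gives
\[
\int_0^1 v(y,t)\phi(y)\,dy=\int_0^1 g(x)(\cQ_t\phi)(x)\,dx\xrightarrow[t\to 0^+]{}\int_0^1 g(x)\phi(x)\,dx,
\]
by Theorem~\ref{bscexstc}. This establishes the initial condition in the weak-$*$ sense against continuous test functions. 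To promote this to pointwise convergence on $(0,1)$, I would decompose $\hq_t=q_{t,0}+(\hq_t-q_{t,0})$: the leading parametrix $q_{t,0}(\cdot,y)$ is, by the explicit formula at the end of Section~\ref{htker4L}, a Gaussian-type approximate identity in $x$ concentrated at $x=y$ for $y\in(0,1)$, and the remainder is controlled uniformly by the Neumann-series estimates of Section~\ref{s.pests}.
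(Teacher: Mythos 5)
Your proposal is correct and follows essentially the same route as the paper: it starts from the weak identity $\int_0^1(\pa_t-L^t_y)\hq_t(x,y)f(y)\,dy=0$ for $f\in\cC^2([0,1])$ and the boundary structure of $\hq_t$ from Corollary~\ref{kernest1}, exactly the ingredients the paper assembles before invoking its ``straightforward limiting argument.'' Your filling-in of that limiting argument --- promoting the weak identity to the pointwise kernel equation on $(0,1)$, transferring the adjoint boundary conditions via the $y^{b_0-1}(1-y)^{b_1-1}$ factorization, and obtaining the initial condition by duality with $\cQ_t$ (Theorem~\ref{bscexstc}) together with the approximate-identity behavior of the leading parametrix --- is a reasonable and faithful elaboration of what the paper leaves implicit.
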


The dual space of $\cC^0([0,1])$ is naturally identified with $\cM([0,1])$, the
space of Borel measures with finite total variation on $[0,1].$ The dual
semi-group, $\cQ_t',$ is thus canonically defined on this space by
\[
\langle\cQ_tf,d\mu\rangle=  \langle f,\cQ_t'd\mu\rangle.
\]
However, $\cC^0([0,1])$ is \emph{not} a reflexive Banach space, so the dual semi-group
is weak${}^*$-continuous, but not necessarily strongly
continuous. The infinitesimal generator $A$ of $\cQ_t$ has a canonically
defined adjoint, $A^*,$ whose domain is defined by the  prescription:
a measure $d\nu\in\Dom(A^*)$ if there exists a constant $C$ so that for every
$f\in\Dom(A),$
\[
\left|\int\limits_{0}^{\infty}Af(x)d\nu(x)\right|\leq C\|f\|_{\cC^0}.
\]
The subtlety is that $\Dom(A^*)$ may not be dense in $\cM([0,1]).$ Following
Phillips, see~\cite{HillePhillips}, we define the adjoint semi-group as
\[
\cQ_t^{\odot}=\cQ'_t\restrictedto_{\cM^{\odot}},
\text{ where }\cM^{\odot}=\overline{\Dom(A^*)}\cap A^*\Dom(A^*).
\]
Phillips shows that $\cQ_t^{\odot}$ is a strongly continuous semi-group on
$\cM^{\odot},$ with infinitesimal generator:
\[
A^{\odot}=A^*\restrictedto_{\Dom(A^*)\cap cM^{\odot}}.
\]
Thus our task is to identify $\cM^{\odot}$.  

By Theorem~\ref{frdeqn}, measures of the form $d\mu=g(y)dy$ with
$g\in\cC^2((0,1))$ are in $\Dom(A^{\odot})$ provided $L^tg \in L^1$ and $g$
satisfies the adjoint boundary conditions~\eqref{eq:adjbc}.  The closure of all
such measures with respect to the topology of $\cM([0,1])$ is $L^1([0,1])$. Any
classical eigenvector of $L^t$ satisfying ~\eqref{eq:adjbc} belongs to
$\cM^{\odot}.$ Elliptic regularity implies that a distributional solution to
$L^t d\mu=d\nu$ is absolutely continuous on $(0,1)$, and hence
\[
L^1([0,1])\subset\cM^{\odot}.
\]
In fact, except for the possibility of atomic measures at $0$ and/or $1$, these
two spaces are equal.

If $b(0)\neq 0,$ then for any $f\in\cC^2([0,1])$, 
\[
\langle Lf,\delta(y)\rangle=b(0)\pa_xf(0).
\]
Since the right hand side does not represent a bounded functional on
$\cC^0([0,1]),$ we obtain that $\delta\notin\Dom(A^*).$ A similar calculation
can be done at $x=1.$ On the other hand, if $b$ vanishes at either end, then:
\begin{enumerate}
\item If $b(0)=b(1)=0,$ then the nullspace of $A^{\odot}$ is  spanned by
  $\delta(y)$ and $\delta(1-y)$, both of which belong to $\cM^{\odot}.$
\item If one of $b(0)$ or $b(1)$ is non-zero, then $A^{\odot}$ has a
  $1$-dimensional nullspace spanned by $\delta(y),$ if $b(0)=0,$ or
  $\delta(1-y),$ if $b(1)=0$, and this nullspace again lies in $\cM^{\odot}.$
\end{enumerate} 

\noindent
Summarizing these observations we have proved the 
\begin{proposition} 
\begin{itemize}
\item If $b(0)$ and $b(1)$ are non-vanishing, then $\cM^{\odot}=L^1([0,1]).$ 
\item If $b(0)=b(1)=0,$ then $\cM^{\odot}=L^1([0,1])\oplus\Span\{\delta(y),\delta(1-y)\};$ 
\item If $b(0)=0,$ $b(1)\neq 0,$ then $\cM^{\odot}=L^1([0,1])\oplus\Span\{\delta(y)\};$ 
\item if $b(0)\neq 0,$ $b(1)= 0,$ then $\cM^{\odot}=L^1([0,1])\oplus\Span\{\delta(1-y)\}.$
\end{itemize}
\end{proposition}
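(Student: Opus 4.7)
The plan is to synthesize three facts already essentially established in the preceding discussion: (a) $L^1([0,1])\subset \cM^{\odot}$, via smooth approximation; (b) any $d\mu\in\Dom(A^*)$ is absolutely continuous on $(0,1)$, by elliptic regularity; and (c) the endpoint atoms $\delta(y)$ and $\delta(1-y)$ lie in $\Dom(A^*)$ precisely when $b(0)=0$ and $b(1)=0$, respectively. These together pin down $\Dom(A^*)$ up to the admissible endpoint atoms, and since the candidate space on the right in each case is $L^1$ plus a finite-dimensional summand (hence closed in total variation), passage to the closure yields the stated identification of $\cM^{\odot}$.

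For the inclusion $\supset$, I would first observe that any $g\in \cC^{\infty}_c((0,1))$ defines a measure $g(y)\, dy\in\Dom(A^{\odot})$: $L^t g$ is smooth and compactly supported inside $(0,1)$, and the adjoint boundary conditions \eqref{eq:adjbc} are trivially satisfied. Density of $\cC^{\infty}_c((0,1))$ in $L^1([0,1])$ then gives $L^1([0,1])\subset\overline{\Dom(A^*)}\subset\cM^{\odot}$. Next, whenever $b(0)=0$, the computation $\langle Lf,\delta(y)\rangle = Lf(0) = b(0)\pa_x f(0) = 0$ for every $f\in\Dom(A)$ shows $\delta(y)\in\Dom(A^*)$ with $A^*\delta(y)=0$, hence $\delta(y)\in\cM^{\odot}$. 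The analogous statement at $y=1$ follows by symmetry, giving the claimed $\supset$ in each of the four cases.

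For the inclusion $\subset$, fix $d\mu\in\Dom(A^*)$ with $A^*d\mu=d\nu\in\cM([0,1])$. Testing against $\phi\in\cC^{\infty}_c((0,1))$ produces the distributional identity $L^t d\mu=d\nu$ on $(0,1)$; since $L^t$ is uniformly elliptic on every compact subinterval of $(0,1)$ and $d\nu\in H^{-1}_{\loc}((0,1))$, standard elliptic regularity places $d\mu|_{(0,1)}$ in $H^1_{\loc}((0,1))\hookrightarrow \cC^0((0,1))$, so $d\mu|_{(0,1)}=g(y)\, dy$ for some continuous $g$. Finiteness of $|d\mu|([0,1])$ upgrades the density to $g\in L^1([0,1])$, and any remaining mass of $d\mu$ must be supported on $\{0,1\}$; thus $d\mu=g(y)\, dy+c_0\delta(y)+c_1\delta(1-y)$. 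To rule out $c_0\neq 0$ when $b(0)\neq 0$, I would exhibit a sequence $f_n\in\Dom(A)$ with $\|f_n\|_{\cC^0}$ bounded and $\pa_x f_n(0)\to\infty$ while $\int L f_n \cdot g\, dy$ stays bounded. A convenient choice is $f_n(x)=\chi(nx)$ with $\chi$ smooth, compactly supported in $(-2,2)$, $\chi(0)=0$, $\chi'(0)=1$: then $f_n(0)=f_n(1)=0$ for $n$ large, the zero flux condition $\lim_{x\to 0^+}x^{b_0}\pa_x f_n(x)=0$ holds because $b_0>0$, and after integration by parts (all boundary contributions vanish because $f_n$ vanishes at $0$ and $1$) one has $\int L f_n\, g\, dy=\int f_n\, L^t g\, dy$, which is bounded by $\|f_n\|_\infty |d\nu|((0,1))$. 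Evaluating $\langle Lf_n,d\mu\rangle = c_0 b(0)n+O(1)$ then forces $c_0=0$, and the symmetric argument at $x=1$ forces $c_1=0$ when $b(1)\neq 0$.

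The main obstacle will be the elliptic regularity step together with the control of the boundary contributions arising when passing from $\langle Lf,d\mu\rangle$ to $\langle f,L^t g\rangle$: one must verify that no additional boundary mass beyond the explicit atoms $c_0\delta(y)$ and $c_1\delta(1-y)$ can be hidden in $d\mu$ at $x=0$ or $x=1$, which amounts to matching the integration-by-parts bookkeeping to the adjoint boundary conditions from Section~\ref{natbvp}. Once this is in place, the four listed cases record exactly which of the two coefficients $c_0,\, c_1$ are forced to vanish.
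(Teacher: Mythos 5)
Your outline follows the same route as the paper's proof: interior smooth densities lie in $\Dom(A^*)$ and their closure in total variation gives $L^1([0,1])$; one-dimensional elliptic regularity applied to $L^t d\mu=d\nu$ on $(0,1)$ shows every element of $\Dom(A^*)$ is an $L^1$ density plus possible atoms at the endpoints; and the computation $\langle Lf,\delta(y)\rangle=b(0)\pa_xf(0)$ decides when those atoms are admissible. In fact your exclusion step is more careful than the paper's, which only remarks that $\delta(y)\notin\Dom(A^*)$ when $b(0)\neq 0$ and does not explicitly rule out an atom hiding inside a combination $g(y)\,dy+c_0\delta(y)\in\Dom(A^*)$; your scaling family $f_n(x)=\chi(nx)$ is exactly the right device for that.

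The one step you flag as the ``main obstacle'' is, however, genuinely unjustified as written: the assertion that all boundary contributions in the integration by parts $\int_0^1 (Lf_n)g\,dy=\int_0^1 f_n\,L^tg\,dy$ vanish ``because $f_n$ vanishes at $0$ and $1$'' is not enough, since the boundary terms also involve $x(1-x)g(x)\pa_xf_n(x)$ and $f_n(x)\pa_x[x(1-x)g(x)]$ as $x\to 0^+$, and at this stage you only know $g\in L^1\cap\cC^0((0,1))$, so you have no pointwise control of $xg(x)$ or $\pa_x(xg(x))$ near the endpoint. The gap closes easily, and without any integration by parts: on $\supp f_n\subset[0,2/n]$ one has $|Lf_n|\leq C\bigl(\|\chi''\|_{\infty}+\|\chi'\|_{\infty}\bigr)n$, so $\bigl|\int_0^1(Lf_n)g\,dy\bigr|\leq Cn\int_0^{2/n}|g|\,dy=o(n)$ because $g\in L^1$, while the atomic part contributes $c_0b(0)n+O(1)$ (the term at $x=1$ vanishes for $n>2$). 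Since $|\langle Lf_n,d\mu\rangle|\leq C\|f_n\|_{\cC^0}$ is bounded, dividing by $n$ forces $c_0=0$, and symmetrically $c_1=0$ when $b(1)\neq 0$, which completes your argument. (Alternatively one can obtain the endpoint behavior of $g$ from the ODE $L^tg=d\nu$ near $x=0,1$, in the spirit of \eqref{zeroflx3}, and then justify the integration by parts, but the direct estimate is simpler.)
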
 

The spectrum of the operator $A^{\odot}$ equals that of $A,$ see Theorem 14.3.3
in~\cite{HillePhillips}; therefore we have the same sort of asymptotics for
solutions to~\eqref{foreqn}:
\begin{theorem} If $0<b(0)$ and $b(1)<0,$ then there exists a $\lambda_1<0,$ so
  that the solution $v(x,t)$ to~\eqref{foreqn}, with initial data $f\in
  L^1([0,1])$ satisfies
\[
v(x,t)=c_0v_0(x)+O(e^{\lambda_1t}),\text{ where }c_0=\int\limits_0^1f(y)\, dy.
\]
Here $v_0\in\Dom(A^*)$ is the positive solution to $L^tv_0=0,$ normalized to
have integral $1$. If $b(0)=b(1)=0,$ then there is a $\lambda_1<0$ such that
\begin{multline*}
v(x,t)=c_0\delta(y)+c_1\delta(1-y) +O(e^{\lambda_1 t}),\\
\text{where } c_0=\int\limits_0^1(1-u_0(y))f(y), 
\quad \text{and} \quad c_1=\int\limits_0^1u_0(y)f(y)\, dy.
\end{multline*}
where $u_0$ is defined in~\eqref{Anullsp00} and~\eqref{Anullsp01}. Finally, if
only one of $b(0)$ or $b(1)$ vanishes, then
\[
v(x,t)= 
\begin{cases}
 c\delta(y)+O(e^{\lambda_1 t}), \qquad & b(0)=0, \\
c \delta(1-y)+O(e^{\lambda_1 t}), & b(1)=0,  
\end{cases}
\qquad  c=\int\limits_0^1u_0(y)f(y)\, dy.
\]
\end{theorem}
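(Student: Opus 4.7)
The plan is to dualise the long-time asymptotics we already established for the primal semi-group $\cQ_t$ in Section~\ref{s.infgen}. Theorem~\ref{frdeqn} gives the relation $v(\cdot,t)\,dy=\cQ_t^{\odot}(f\,dy)$ whenever $f\in L^1([0,1])\subset\cM^{\odot}$, and Theorem~14.3.3 of~\cite{HillePhillips} identifies $\sigma(A^{\odot})$ with $\sigma(A)$, so the spectral gap at $\lambda_1<0$ established for the primal generator passes immediately to the adjoint. First I would pair $v$ against an arbitrary test function $\phi\in\cC^0([0,1])$:
\[
\int_0^1 \phi(y)\,v(y,t)\,dy=\int_0^1 \cQ_t\phi(x)\,f(x)\,dx.
\]

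Next I would invoke the spectral splitting $\cQ_t\phi=P_0\phi+R_t\phi$, where $P_0$ is the finite-rank projection onto $\ker A$ and $\|R_t\|_{\cC^0\to\cC^0}\leq M_\delta e^{(\lambda_1+\delta)t}$ for every small $\delta>0$. In case~1 this operator-norm bound is precisely the corollary following Theorem~\ref{spcthrybnz}; in the degenerate cases it follows from the splittings~\eqref{grpsplt1} and~\eqref{atmasymp}, together with the spectral gap proved for $A^D$, since $R_t$ then acts through $\cQ_t^D$ on the invariant Dirichlet complement of $\ker A$. Substituting and taking the supremum over $\phi$ in the unit ball of $\cC^0$ yields
\[
\bigl\|v(\cdot,t)\,dy-P_0^{\odot}(f\,dy)\bigr\|_{\mathrm{TV}}\leq M_\delta\,\|f\|_{L^1}\,e^{(\lambda_1+\delta)t},
\]
where the dual projection is characterised by $\langle\phi,P_0^{\odot}\mu\rangle=\langle P_0\phi,\mu\rangle$.

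The final step is to compute $P_0^{\odot}(f\,dy)$ case by case, using the biorthogonal pairing between the kernels $\ker A$ and $\ker A^{\odot}$ identified in the proposition immediately preceding the theorem. In case~1, $P_0\phi=(\int v_0\phi)\cdot 1$ dualises directly to $P_0^{\odot}(f\,dy)=(\int f)\,v_0\,dy$. In case~2, the basis of $\ker A$ biorthogonal to $\{\delta(y),\delta(1-y)\}\subset\ker A^{\odot}$ is $\{1-u_0,u_0\}$, since $u_0(0)=0$ and $u_0(1)=1$ by~\eqref{Anullsp01}, so $P_0\phi=\phi(0)(1-u_0)+\phi(1)u_0$ and dualising produces
\[
P_0^{\odot}(f\,dy)=\Bigl(\int_0^1(1-u_0)f\Bigr)\delta(y)+\Bigl(\int_0^1 u_0 f\Bigr)\delta(1-y),
\]
matching the stated constants $c_0,c_1$. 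Case~3 is analogous with the appropriate one-dimensional null space.

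The main obstacle is ensuring that the decay of $R_t$ holds as an operator-norm bound on $\cC^0$ uniform in $\phi$, rather than merely pointwise on individual elements. This is supplied by Theorem~2.1 in Section~B-IV of~\cite{1prmsmgrpspops}, applied to the restriction of $\cQ_t$ to the complement of $\ker A$; the compactness of $\cQ_t$ for $t>0$ and the isolation of $0$ in $\sigma(A)$ guarantee both that this restriction has its spectrum in $\{\Re\lambda\leq\lambda_1\}$ and that the spectral projection $P_0$ is given by a Dunford contour integral around $0$, whose dualisation $P_0^{\odot}$ automatically maps $\cM^{\odot}$ into itself.
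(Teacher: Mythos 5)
Your proposal is correct and follows essentially the same route as the paper's (very brief) proof, which likewise rests on the spectral gap transferred to the adjoint via $\sigma(A^{\odot})=\sigma(A)$ together with the splittings \eqref{grpsplt1} and \eqref{atmasymp}, the finite-rank limits being identified exactly as in your biorthogonality computation. One remark: carried out faithfully, your dualisation in the mixed case yields $c=\int_0^1 f(y)\,dy$ (as mass conservation, i.e.\ $\cQ_t1=1$, forces), which matches the paper's earlier assertion that $\cQ_t\phi$ tends to the constant $\phi(0)$ (resp.\ $\phi(1)$) there and indicates that the constant $\int_0^1 u_0(y)f(y)\,dy$ in the last display of the theorem is a slip rather than a defect of your argument.
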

\begin{proof} The first case is immediate from the fact that $0$ is an isolated
  point in the spectrum of $A^{\odot},$ and all other eigenvalues have strictly
  negative real part. The second case follows from~\eqref{atmasymp} and
  the analogous fact about the spectrum of $A^{D *}$; the last two assertions
  are similar.
\end{proof}

As a simple special case of our regularity theorem for solutions of the backwards
equation we have:
\begin{corollary} If $u(x,t)$ is a solution to
  \begin{equation}
    \pa_t u=x(1-x)\pa_x^2 u\quad u(x,0)=f(x),
  \end{equation}
  for $f\in\cC^{2l}([0,1]),$ then, for $(j+k)\leq l$, the functions
  $\pa_t^jL^ku$ are continuous on $[0,1]\times [0,\infty).$
\end{corollary}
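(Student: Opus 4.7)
The plan is to combine the uniform-up-to-the-boundary $\cC^{2l}$ spatial regularity of $u$ with the fact that $L=x(1-x)\pa_x^2$, and hence each iterate $L^k$, is a linear differential operator of order $2k$ whose coefficients are polynomials on $[0,1]$. In particular $L^k$ maps $\cC^{2k}([0,1])$ continuously into $\cC^0([0,1])$, with no loss of regularity at the endpoints. Applying Proposition~\ref{highderest3} together with Theorem~\ref{bscexstc} to the case $b\equiv 0$ and $m=2l$, the hypothesis $f\in\cC^{2l}([0,1])$ yields
\[
u\in\cC^0([0,\infty)_t;\cC^{2l}([0,1]_x)),
\]
so that $L^m u\in\cC^0([0,1]\times[0,\infty))$ for every $m\leq l$.

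Next, for $t>0$ the heat kernel construction from Section~\ref{htker4L} gives $u\in\cC^\infty([0,1]\times(0,\infty))$, and $\pa_t u=Lu$ there; a straightforward induction yields $\pa_t^j L^k u=L^{j+k}u$ for $t>0$ and all $j,k\geq 0$. To extend this identity continuously down to $t=0$ when $j+k\leq l$, I use the fundamental theorem of calculus. Given $k$ with $k+1\leq l$, for $0<\tau<t$ we have
\[
L^k u(x,t)-L^k u(x,\tau)=\int_\tau^t L^{k+1}u(x,s)\,ds,
\]
and the integrand is continuous on $[0,1]\times[0,\infty)$ by the previous step. Letting $\tau\to 0^+$ gives
\[
L^k u(x,t)=L^k f(x)+\int_0^t L^{k+1}u(x,s)\,ds,
\]
so $\pa_t L^k u$ exists and is continuous on $[0,1]\times[0,\infty)$, taking the value $L^{k+1}f(x)$ at $t=0$. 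Iterating this argument in $j$ produces $\pa_t^j L^k u=L^{j+k}u$ as a continuous function on $[0,1]\times[0,\infty)$ whenever $j+k\leq l$, which is the desired conclusion.

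The main obstacle is already hidden in Proposition~\ref{highderest3}, namely the uniform-up-to-the-boundary $\cC^{2l}$ estimate for $u(\cdot,t)$. Once that spatial regularity is in hand, the remainder of the argument is formal, because the coefficients of $L^k$ are smooth (in fact polynomial) on the closed interval $[0,1]$, so applying $L^k$ entails no boundary-regularity loss; the vanishing factor $x(1-x)$ in the coefficient of $\pa_x^2$ is only helpful, never harmful.
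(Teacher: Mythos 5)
Your argument is correct and follows essentially the route the paper intends: the corollary is exactly the "simple special case" of the $\cC^{2l}$ regularity theory (Theorem~\ref{bscexstc} together with Proposition~\ref{highderest3}), since $L^k$ is an order-$2k$ operator with smooth coefficients on the closed interval, and the time derivatives are recovered from the equation. Your explicit fundamental-theorem-of-calculus bootstrap giving $\pa_t^jL^ku=L^{j+k}u$ down to $t=0$ for $j+k\leq l$ just spells out what the paper leaves implicit (and indeed mirrors the iterated integration argument the paper uses immediately afterwards to derive the Taylor expansion with remainder).
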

If $v$ solves the forward Kolmogorov equation:
\begin{equation}
  \pa_tv=\pa_y^2[y(1-y) v]\text{ with }v(y,0)=g(y)\in\cC^{2l}([0,1]),
\end{equation}
then it is a simple calculation to see that $u(x,t)=x(1-x)v(x,t),$ solves the
backwards equations, with $u(x,0)=x(1-x)g(x),$  and $u(0,t)=u(1,t)=0.$

The theorem shows that $x(1-x)v(x,t)$ is therefore a $\cC^{2l}$ function on
$[0,1]\times [0,\infty),$ vanishing at $0$ and $1.$ This easily implies that
$v$ itself is in $\cC^{2l}([0,1]\times [0,\infty)).$ 
If we let $L^t_{\WF}g=\pa_x^2x(1-x)g,$ then this regularity shows that
for any $1\leq j\leq l$
\begin{equation}\label{tderiv1}
  \pa_tL^{t\,(j-1)}_{\WF}v=L^{t\,j}_{\WF}v.
\end{equation}
We can integrate the forward equation, and use this formula, repeatedly
integrating by parts, to obtain
\begin{equation}
\begin{split}\label{4}
  v(x,t)&=g(x)+\int\limits_{0}^tL^t_{\WF}v(x,s)ds\\
&=g(x)+(s-t)L^t_{\WF}v(x,s)\Bigg|^{s=t}_{s=0}+\int\limits_{0}^t(t-s)L^{t\, 2}_{\WF}v(x,s)ds\\
&=\sum_{j=0}^{l-1}\frac{t^jL^{t\,j}_{\WF}g}{j!}+
\frac{1}{(l-1)!}\int\limits_{0}^t(t-s)^lL^{t\, l}_{\WF}v(x,s)ds.
\end{split}
\end{equation}

Using the same argument, we can show that if $f\in\cC^{2l}([0,1])$ and $u$
solves the initial value problem: $(\pa_t-L)u=0,\, u(x,0)=f(x),$ for $L$ a
generalized Wright-Fisher operator, then
\begin{equation}
  u(x,t)
=\sum_{j=0}^{l-1}\frac{t^jL^{j}f}{j!}+
\frac{1}{(l-1)!}\int\limits_{0}^t(t-s)^lL^{l}u(x,s)ds.
\end{equation}
Applying Proposition~\ref{highderest3} show that there is a constant $M_{2l}$ so that
\begin{equation}
  |L^lu(x,s)|\leq M_{2l}e^{\mu_{2l}s}\|f\|_{\cC^{2l}},
\end{equation}
and therefore:
\begin{equation}
  \left| u(x,t)
-\sum_{j=0}^{l-1}\frac{t^jL^{j}f}{j!}\right|\leq
\frac{M_{2l}\|f\|_{\cC^{2l}}e^{\mu_{2l}t}t^l}{l!}.
\end{equation}

\appendix
\section{Appendix}\label{pertthryprfs}

\noindent{{\it Proof of Proposition \ref{prop1515}}: }

Consider the functions
\[
v(x,t;s)=\int\limits_{0}^{\infty}k_{t-s}^b(x,y)yh(y)\pa_y g(y,s)\, dy,
\]
defined when $t \geq s$; these satisfy
\[
(\pa_t-L_b)v=0 \text{ and } v(x,s;s)=xh(x)\pa_x g(x,s).
\]
By the maximum principle and the derivative bound on $g$ it follows that
\[
|v(x,t;s)|\leq \sup |xh(x)M/\sqrt{sx}| \leq \sqrt{L}M\|h\|_{\infty},
\]
and hence
\[
|A^b_tg(x)|\leq
\int\limits_0^tM\sqrt{L}\|h\|_{\infty}\frac{ds}{\sqrt{s}}=2M\sqrt{Lt}\|h\|_{\infty}. 
\]
This establishes~\eqref{Aestj} for $j=1$, with $d_0=1$. Now apply Lemma~\ref{lem2} to get
\[
|\pa_xv(x,t;s)|\leq\frac{C_b\|v(\cdot,s;s)\|_{\infty}}{\sqrt{x(t-s)}},
\]
so that
\[
\begin{split}
|\pa_xA^b_tg(x)|&\leq\int\limits_0^t\frac{C_bM\sqrt{L}\|h\|_{\infty}ds}{\sqrt{xs(t-s)}}\\
&=\frac{C_b\pi M\sqrt{L}\|h\|_{\infty}}{\sqrt{x}};
\end{split}
\]
setting $d_1 = \pi$, this is \eqref{dAestj} for $j=1$. 

Assume that we have chosen $d_0, \ldots, d_{j-1}$. Now write
\[
v_j(x,t;s) = \int_0^\infty k_{t-s}^b(x,y) y\pa_y v_{j-1}(y,s)\, dy.
\]
Using \eqref{dAestj} for $j-1$, this is bounded by
\begin{multline*}
d_{j-1}M C_b^{j-1} (\sqrt{L} ||h||_\infty)^{j} \int_0^\infty k_{t-s}^b(x,y) s^{\frac{j-1}2} \, dy \\ 
= \frac{2}{j} d_{j-1}M C_b^{j-1}  (\sqrt{L} ||h||_\infty)^{j} t^{\frac{j}{2}}, 
\end{multline*}
since $\int_0^\infty k_{t-s}^b(x,y)\, dy = 1$ for $s < t$, which establishes~\eqref{Aestj} for $j$.

Using the induction hypothesis one more time, insert the result into the estimate of Lemma~\ref{lem2}.
Noting that $1/( t-s + \sqrt{x (t-s)}) \leq 1/\sqrt{x (t-s)}$, we get
\[
\begin{split}
|\pa_x ( A^b_t ) ^jg(x)|&\leq \int\limits_0^t
\frac{d_{j-1}M(C_b\sqrt{L}\|h\|_{\infty})^js^{\frac{j-2}{2}}ds}{\sqrt{x(t-s)}}\\
&=\frac{d_{j-1}M(C_b\sqrt{L}\|h\|_{\infty})^jt^{\frac {j-1}2}}{\sqrt{x}}
\int\limits_0^1\frac{\sigma^{\frac{j-2}{2}}d\sigma}{\sqrt{1-\sigma}}.
\end{split}
\]
Evaluating the integral shows that we should set
\[
d_j=\sqrt{\pi} d_{j-1}\frac{\Gamma\left(\frac{j}{2}\right)}{\Gamma\left(\frac{j+1}2 \right)},
\]
and this proves~\eqref{dAestj} for $j$, thereby completing the induction.  A straightforward calculation 
shows that $d_j$ is given by the formula in the statement of the Proposition.
\hfill $\Box$

\bigskip

\noindent{{\it Proof of Proposition \ref{atbests2}}:}

We start with $j=1.$  These results follow from~\eqref{leibfrm0},  the maximum principle, and 
Proposition~\ref{prop1515}. Combining these ingredients shows that when $t<T,$
\[
|\pa^\ell_xA^b_tg(x)|\leq  \sqrt{t}[2^{\ell+1}\|xh\|_{\cC^\ell}](M+\|g\|_{\cC^{\ell,\infty}([0,T])}),
\]
and,
\begin{multline*}
|\pa^{\ell+1}_xA^b_tg(x)|\leq\\
\frac{d_1MC_{b+\ell}\sqrt{L}\|h\|_{\infty}}{\sqrt{x}}+\sqrt{t}[2^{\ell+1}\|xh\|_{\cC^{\ell+1}}(1+\sqrt{L})](\|g\|_{\cC^{\ell,\infty}}+M),
\end{multline*}
which establishes \eqref{Aestjl} and~\eqref{dAestjl} when j=1. The main issue is to see how $D_j$ 
decreases as $j$ increases. We assume that these estimates have been established for $\{1,\dots, j-1\}.$

Applying~\eqref{leibfrm0}, we see that
\begin{multline*}
\pa_x^l(A^b_t)^jg(x)=A_t^{b+\ell}(\pa_x^\ell(A^b_{\cdot})^{j-1}g)(x)+\\ \sum_{p=1}^\ell
\left(\begin{matrix}\ell \\p\end{matrix}\right)
\int\limits_{0}^{t}\int\limits_0^{\infty}k^{b+\ell}_{t-s}(x,z)\pa_z^{p}(zh(z))\pa_z^{\ell+1-p} (A^b_s)^{j-1}g)(z)\, dzds.
\end{multline*}
Using the induction hypothesis and the maximum principle, the first term on the right here is bounded by
\[
\frac{2D_{j-1}\sqrt{L}\|h\|_{\infty}t^{\frac{j}{2}}}{j}(M+\|g\|_{\cC^{\ell,\infty}}) [C'_{T,L,\ell,b}\|h\|_{\cC^{\ell}}]^{j-1}.
\]
The terms in the sum with $2\leq p\leq \ell$ can be bounded using the maximum principle and Lemma~\ref{higdest}, while 
the term with $p=1$ is controlled using the induction hypothesis. Thus altogether, the sum is bounded by
\[
\frac{(2^\ell \|xh\|_{\cC^\ell})^j\|g\|_{\cC^\ell}t^j}{j!}+\frac{2D_{j-2}t^{j+1}{2}(M+\|g\|_{\cC^{\ell,\infty}})\|xh\|_{\infty}[C'_{T,L,\ell,b}
\|h\|_{\cC^\ell}]^{j-1}}{j(j-2)}.
\]
So long as 
\begin{equation}\label{dlwrbnd}
D_{j-1}\geq\max\left\{\frac{1}{j!},\frac{2D_{j-2}}{j-1}\right\},
\end{equation}
then there is some $C'_{T,L,\ell,b}$ so that~\eqref{Aestjl} holds for all $j.$ That~\eqref{dlwrbnd} 
holds follows easily from the proof of~\eqref{dAestjl}, to which we now turn.

We next apply~\eqref{leibfrm0} to see that
\begin{multline*}
\pa_x^{\ell +1}(A^b_t)^jg(x)=\pa_x\int\limits_0^t\int\limits_0^{\infty} k^{b+\ell}_{t-s}(x,z)zh(z)\pa_z^{\ell+1}(A^b_s)^{j-1}g)(z)\, dzds+\\
\left(\begin{matrix}\ell \\1\end{matrix}\right)\pa_x\int\limits_0^t\int\limits_0^{\infty}k^{b+\ell}_{t-s}(x,z)
\pa_y(zh(z))\pa_z^{\ell}(A^b_s)^{j-1}g)(z)\, dzds+\\ \sum_{p=2}^\ell \left(\begin{matrix}\ell \\p\end{matrix}\right)
\int\limits_{0}^{t}\int\limits_0^{\infty}k^{b+\ell +1}_{t-s}(x,z)\pa_z\left[\pa_z^{p}(zh(z))\pa_z^{\ell +1-p} 
(A^b_s)^{j-1}g)(z)\right]\, dzds.
\end{multline*}
By the induction hypothesis and Lemma~\ref{lem2}, the first term is estimated by
\[
\begin{split}
&\frac{D_{j-1}(M+\|g\|_{\cC^{\ell,\infty}})(C_{T,L,\ell,b}\|h\|_{\cC^{\ell+1}})^{j-1}}{\sqrt{x}}
\int\limits_{0}^tC_{b+\ell}\sqrt{L}\|h\|_{\infty}\frac{s^{\frac{j-2}{2}}ds}{\sqrt{t-s}}\\
& = \sqrt{\pi}D_{j-1}t^{\frac{j-1}{2}}\frac{\Gamma(\frac{j}{2})}{\Gamma(\frac{j+1}{2})}
\frac{(M+\|g\|_{\cC^{\ell,\infty}})(C_{T,L,\ell,b}\|h\|_{\cC^{\ell+1}})^{j-1}}{\sqrt{x}},
\end{split}
\]
and the second term by:
\[
\frac{\sqrt{\pi}2D_{j-2}t^{\frac{j}{2}}}{j-2}\frac{\Gamma(\frac{j}{2})}{\Gamma(\frac{j+1}{2})}
\frac{(M+\|g\|_{\cC^{\ell,\infty}})(C_{T,L,\ell,b}\|h\|_{\cC^{\ell+1}})^{j-1}}{\sqrt{x}}.
\]
Using the induction hypothesis, Lemma~\ref{higdest} and the maximum principle,
the last term is bounded by
\[
2^\ell \frac{\sqrt{\pi}4D_{j-2}t^{\frac{j}{2}}}{j(j-2)}(M+\|g\|_{\cC^{\ell,\infty}})(C'_{T,L,\ell,b}\|h\|_{\cC^{\ell+1}})^{j}.
\]
Hence if  $D_j$ satisfies the recursion relationship in the statement of the proposition, 
then $D_{j-1}$ satisfies~\eqref{dlwrbnd}, for $j\geq 2$ and there exist constants $C_{T,L,\ell,b}$ and $C'_{T,L,\ell,b}$ 
so that~\eqref{Aestjl} and~\eqref{dAestjl} hold for all $j\geq 2.$ \hfill $\Box$

{\bibliographystyle{siam} {\bibliography{alla-k}}}

\end{document}